\def\cC{\mathcal{C}}
\def\cL{\mathcal{L}}
\newfont{\bb}{msbm10 at 11pt}
\newfont{\bbsmall}{msbm8 at 8pt}
\def\cS{\mathcal{S}}
\def\S{{\Sigma}}
\newcommand{\ov}{\overline}
\def\rth{\mathbb{R}^3}
\def\R{\mathbb{R}}
\def\B{\mathbb{B}}
\def\N{\mathbb{N}}
\def\oB{\ov{\mathbb{B}}}
\def\be{\beta}
\newcommand{\inj}{\mbox{\rm Inj}}
\newcommand{\ben}{\begin{enumerate}}
\newcommand{\bit}{\begin{itemize}}
\newcommand{\een}{\end{enumerate}}
\newcommand{\eit}{\end{itemize}}
\newcommand{\wh}{\widehat}
\newcommand{\Int}{\mbox{Int}}
\newcommand{\cH}{{\mathcal H}}
\newcommand{\cT}{{\mathcal T}}
\newcommand{\ed}{\end{document}}
\def\a{{\alpha}}
\def\t{{\theta}}
\def\g{{\gamma}}
\def\G{{\Gamma}}
\def\de{{\delta}}
\def\ve{{\varepsilon}}
\newcommand{\cD}{{\mathcal D}}
\newtheorem{theorem}{Theorem}[section]
\newtheorem{proposition}[theorem]{Proposition}
\newtheorem{remark}[theorem]{Remark}
\newtheorem{corollary}[theorem]{Corollary}
\newtheorem{definition}[theorem]{Definition}
\newtheorem{claim}[theorem]{Claim}
\definecolor{b}{rgb}{.1,.1,.7}
\definecolor{rr}{rgb}{.8,0,.3}
\definecolor{g}{rgb}{0,.5,0}
\definecolor{pp}{rgb}{.5,0,.7}
\definecolor{r}{rgb}{.6,0,.3}
\definecolor{y}{rgb}{.9,.99,.9}
\begin{document}
\begin{title}
{Limit lamination theorems for H-surfaces}
\end{title}
\date{}
\begin{author}
{William H. Meeks III\thanks{This material is based upon
   work for the NSF under Award No. DMS-1309236.
   Any opinions, findings, and conclusions or recommendations
   expressed in this publication are those of the authors and do not
   necessarily reflect the views of the NSF.}
   \and Giuseppe Tinaglia\thanks{The second author was partially
   supported by
EPSRC grant no. EP/M024512/1}}
\end{author}
\maketitle
\vspace{-.3cm}

\begin{abstract}
In this paper we prove some general results for constant
mean curvature lamination limits of
certain sequences of compact  surfaces
 $M_n$ embedded in $\rth$ with constant mean curvature
 $H_n$ and fixed finite genus,
when the boundaries of these surfaces tend to infinity.
Two of these theorems generalize
to the non-zero constant mean curvature case,
similar structure theorems by Colding and Minicozzi
in~\cite{cm23,cm25} for limits of
sequences of minimal surfaces of fixed finite genus.
\end{abstract}

\vspace{.3cm}

\noindent{\it Mathematics Subject Classification:} Primary 53A10,
   Secondary 49Q05, 53C42

\noindent{\it Key words and phrases:}
Minimal surface, constant mean
curvature, one-sided curvature estimate,
curvature estimates, minimal lamination,
$H$-surface, $H$-lamination,
chord-arc, removable singularity, positive injectivity radius.
\maketitle

\section{Introduction}
In this paper  we apply results in~\cite{mt8,mt7,mt13,mt9} to obtain
(after passing to a subsequence)
constant mean curvature lamination limits for
sequences of compact
surfaces $M_n$ embedded in $\rth$ with constant mean
curvature $H_n$ and fixed finite genus,
when the boundaries of these surfaces tend to infinity in $\rth$.
These lamination limit results are inspired by and generalize
to the non-zero constant mean curvature setting
similar structure theorems by Colding and Minicozzi in~\cite{cm23,cm25}
in the case of
embedded  minimal surfaces; also see some closely related work of
Meeks, Perez and Ros in~\cite{mpr14, mpr11} in the minimal setting.

For clarity of  exposition, we will call an oriented surface
$M$ immersed in $\rth$ an {\it $H$-surface} if it
is {\it embedded}  and it
has {\it non-negative constant mean curvature $H$}.
In this manuscript  $\B(R)$ denotes the open ball in $\rth$
centered at the origin $\vec{0}$
of radius $R$ and for a
point $p$ on a surface $\Sigma$ in $ \rth$, $|A_{\Sigma}|(p)$ denotes the norm
of the second fundamental
form of $\Sigma$ at $p$.

\begin{definition} \label{def:lbsf} {\rm Let $U$ be an open set in $\rth$. \ben[1.]
\item We say that a sequence of smooth surfaces
$\Sigma(n)\subset U$  has {\em locally bounded norm of the second fundamental
form in $U$} if for every compact
subset $B$ in $U$, the norms of the second fundamental forms of the
surfaces $\Sigma(n)$
are uniformly bounded in $B$.
\item We say that a sequence of smooth surfaces
$\Sigma(n)\subset U$  has {\em locally positive injectivity radius
in $U$} if for every compact
subset $B$ in $U$, the injectivity radius functions  of the surfaces
$\Sigma(n)$ at points in $B$
are bounded away from zero for $n$ sufficiently large; see
Definition~\ref{definj} for the definition of the
injectivity radius function.
\item We  say that a sequence of smooth surfaces
$\Sigma(n)\subset U$  has {\em uniformly  positive
injectivity radius in $U$} if there exists
an $\ve>0$ such that for every compact
subset $B$ in $U$, the injectivity radius functions  of the
surfaces $\Sigma(n)$ at points in $B$
are bounded from below by $\ve$ for $n$ sufficiently large.
\een} \end{definition}

We will also need the next definition
 in the statement of Theorems~\ref{H-lam-thm} below,
as well as Definition~\ref{def:flux}  of the flux
of a 1-cycle in an $H$-surface, in the statements of
Theorems~\ref{H-lam-thm} and \ref{geometry2} below.

\begin{definition} {\em
A {\em strongly Alexandrov embedded} $H$-surface $f\colon \Sigma \to \rth$ is
a proper immersion of a complete surface $\Sigma$ of constant mean curvature
$H$ that extends to a proper
immersion of a complete three-manifold $W$ so that $\S$
is the mean convex boundary of $W$ and $f|_{\Int (W)}$ is injective.
See~\cite{mt3} for further discussion on this notion.}
\end{definition}

In this paper we wish to describe for any  large radius $R>0$,  the
geometry in  $\B(R)$ of any connected compact  $H$-surface $M$ in $\rth$ of
fixed finite genus that passes through the origin
and satisfies:  \ben \item the non-empty boundary of $M$
lies much farther than $R$  from the origin;
\item the injectivity radius function of $M$ is not
too small at points in $\B(R)$. \een
In order to obtain this geometric description  of $M$,
it is natural to consider a sequence
$\{M_n\}_{n\in\mathbb N}$
of compact $H_n$-surfaces in $\rth$
with finite genus at most $k$, $\vec{0}\in M_n$, $M_n$
contains no spherical components,
$\partial M_n\subset [\rth -\B(n)]$
and such that the sequence has locally
bounded injectivity radius  in $\rth$.  Then after passing to a subsequence and
possibly translating the surfaces $M_n$ by vectors of uniformly bounded length   so
that $\vec{0}\in M_n$ still holds, then exactly
one of the following three  possibilities occurs in the sequence:
\ben
\item $\{M_n\}_{n\in\mathbb N}$ has locally
bounded norm of the second fundamental form in $\rth$.
\item $\lim_{n\to \infty} |A_{M_n}|(\vec{0})=\infty$ and
$\lim_{n\to \infty} I_{M_n}(\vec{0})=\infty$.
\item $\lim_{n\to \infty} |A_{M_n}|(\vec{0})=\infty$ and
$\lim_{n\to \infty} I_{M_n}(\vec{0})=C$, for some $C>0$
\een
Depending on which of the above three mutually exclusive conditions
holds for $\{M_n\}_{n\in\mathbb N}$, one has
a limit geometric description given by its corresponding theorem listed below.

The next theorem corresponds to the case where
$\{M_n\}_{n\in\mathbb N}$ has locally
bounded norm of the second fundamental form in $\rth$.

\begin{theorem}\label{H-lam-thm}
Suppose that  $\{M_n\}_{n\in\mathbb N}$
is a sequence of compact $H_n$-surfaces in $\rth$
with finite genus at most $k$, $\vec{0}\in M_n$, $M_n$
contains no spherical components,
$\partial M_n\subset [\rth -\B(n)]$ and  the sequence has locally
bounded norm of the second fundamental form in $\rth$.
Then, after replacing  $\{M_n\}_{n\in\mathbb N}$  by a subsequence,
the sequence of surfaces $\{M_n\}_{n\in \mathbb N}$
converges with respect to the $C^\a$-norm, for any $\a\in(0,1)$, to a minimal lamination  $M_\infty$
of $\rth $ by parallel planes
or it converges smoothly (with
multiplicity  one or two) to a possibly
disconnected, strongly Alexandrov embedded
$H$-surface $M_\infty$ of genus at most $k$
and every component of $M_\infty$ is non-compact.
Moreover: \ben[1.]
\item  If the convergent sequence has uniformly positive injectivity
radius in $\rth$ or if $H=0$, then
the norm of the second fundamental form of $M_\infty$
is bounded.
\item If there exist positive numbers $I_0, H_0$
such that for $n$ large either the injectivity radius functions of the $M_n$
at $\vec{0} $ are bounded from above
by $ I_0$ or $H_n\geq H_0$, then the limit object
is a possibly
disconnected, strongly Alexandrov embedded
$H$-surface $M_\infty$ and
there exist a positive constant $\eta=\eta (M_\infty)$ and
simple closed oriented curves $\g_n \subset M_n$ with scalar
fluxes $F(\g_n)$ with  $\lim_{n\to\infty} F(\g_n)=\eta$.
\een
\end{theorem}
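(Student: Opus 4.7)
The plan is to combine standard compactness theory for surfaces with locally bounded second fundamental form with the structure theorems and curvature estimates for $H$-laminations developed in the cited Meeks--Tinaglia papers. First, since $|A_{M_n}|$ is locally bounded in $\rth$ and $\vec{0}\in M_n$, a diagonal argument over the exhausting balls $\B(R)$ produces a subsequence that converges in the $C^\a$-topology, for any $\a\in(0,1)$, to a lamination $M_\infty$ of $\rth$; after a further subsequence $H_n\to H$ for some $H\geq 0$, so every leaf of $M_\infty$ has constant mean curvature $H$. Embeddedness together with the curvature bound gives uniform local area bounds, and smooth convergence on compact sets then yields genus of $M_\infty$ at most $k$.

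Next, I would split on the value of $H$. If $H=0$, the limit is a minimal lamination and the finite-genus classification results in \cite{cm23,cm25,mpr11,mpr14} force it to be a lamination by parallel planes or a single properly embedded minimal surface. If $H>0$, the mean curvature maximum principle together with the structure results in \cite{mt8,mt7} rules out a nontrivial lamination with more than one leaf: two distinct leaves of positive mean curvature cannot accumulate from one side without producing an interior tangency violating the maximum principle. Hence the limit is a single, possibly disconnected, $H$-surface $M_\infty$ approached by $M_n$ with multiplicity one or two (the multiplicity-two case occurring exactly when two sheets of $M_n$ collapse to the same leaf from opposite sides). Strong Alexandrov embeddedness of $M_\infty$ is then obtained by taking limits of the mean convex 3-manifolds bounded by $M_n$ as in \cite{mt3}. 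Non-compactness of the components follows because any compact strongly Alexandrov embedded $H$-surface in $\rth$ must be a round sphere, which under smooth convergence would produce a spherical component of $M_n$ for $n$ large, contradicting the hypothesis.

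For item (1), if $H=0$ I would invoke the classical curvature estimates for embedded minimal surfaces to globalize the bound on $|A|$, and if the sequence has uniformly positive injectivity radius in $\rth$ I would apply the curvature estimate for $H$-surfaces in \cite{mt7}, which bounds $|A|$ in terms of a positive lower bound for the injectivity radius. For item (2), under either supplementary hypothesis (either $I_{M_n}(\vec{0})$ bounded above by $I_0$ or $H_n\geq H_0$), the minimal-lamination-by-planes alternative is excluded, since planes have infinite injectivity radius and zero mean curvature; so the limit $M_\infty$ is a genuine strongly Alexandrov embedded $H$-surface. The existence of a nontrivial simple closed oriented $1$-cycle $\g_\infty \subset M_\infty$ with strictly positive scalar flux $\eta=F(\g_\infty)$ then comes from the chord-arc and flux estimates in \cite{mt13,mt9} applied to $M_\infty$, and smooth convergence on compact sets transports $\g_\infty$ to simple closed curves $\g_n\subset M_n$ whose fluxes satisfy $F(\g_n)\to \eta$.

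The main obstacle is the $H>0$ case in the second step: ruling out that $M_\infty$ is a lamination with more than one leaf and pinning down the multiplicity-one-or-two smooth convergence. This requires the full strength of the accumulation and separation results in \cite{mt8,mt7}, which exploit the barrier effect of positive mean curvature to prevent leaves from piling up on each other. A secondary delicate point is the construction of the flux curves in item (2): one must first realize in $M_\infty$ a topologically nontrivial $1$-cycle with strictly positive flux (leveraging that $M_\infty$ cannot be planar under the supplementary hypothesis), and then carefully transport it back to $M_n$ via the smooth convergence so that no flux is lost in the limit.
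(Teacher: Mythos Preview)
Your outline for the main dichotomy and for item~1 tracks the paper's argument reasonably well, if loosely. The paper passes to a weak $H$-lamination, uses that limit leaves are stable and hence planes (forcing $H=0$ whenever a limit leaf exists), and then rules out the coexistence of a planar limit leaf with a non-flat finite-genus leaf via properness~\cite{mr13} plus the Halfspace Theorem. Your maximum-principle ``barrier'' heuristic for $H>0$ is a workable substitute for the stability route, and item~1 is essentially as you describe.

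The substantive gap is in item~2. Two problems:

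\emph{First}, your exclusion of the parallel-plane alternative under the hypothesis $I_{M_n}(\vec 0)\leq I_0$ is not justified by ``planes have infinite injectivity radius'': convergence to a plane lamination is only $C^\a$ and does not automatically transfer injectivity-radius information between $M_n$ and $M_\infty$. The paper instead observes that $H_n\to 0$ makes the Gaussian curvature of $M_n$ eventually non-positive, so by Jacobi-field theory the bound $I_{M_n}(\vec 0)\leq I_0$ produces geodesic loops $\a_n$ at $\vec 0$ of length $2I_{M_n}(\vec 0)$, which would have to converge to a geodesic loop in a flat plane --- a contradiction.

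\emph{Second}, and more seriously, your construction of the flux curves is incomplete. The chord-arc and one-sided curvature estimates in~\cite{mt13,mt9} do not by themselves yield a cycle of positive flux on $M_\infty$. In the favorable cases --- $H=0$, or $H>0$ with the injectivity radius of $M_\infty$ bounded below (hence $|A_{M_\infty}|$ bounded by Theorem~\ref{cest}) --- the paper invokes the results of~\cite{mt2}, after using the geodesic-loop argument above to show $M_\infty$ is not simply connected. But there is a case you do not address at all: $H_n\geq H_0>0$ with the injectivity radius of $M_\infty$ \emph{not} bounded below. Then $M_\infty$ has unbounded $|A|$, and no existing classification delivers a nontrivial flux cycle. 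The paper handles this by blowing up $M_\infty$ at points of almost-minimal injectivity radius (Proposition~\ref{cor:5.7}), producing a new sequence $\mathbf{M}_n$ with $\mathbf{H}_n\to 0$ and $I_{\mathbf{M}_n}(\vec 0)=1$. If this rescaled sequence has locally bounded $|A|$ one recurses into the already-settled cases; if not, one lands in the setting of Theorem~\ref{geometry2}, whose two-column parking-garage structure furnishes explicit connection loops with flux converging to a nonzero vector. Thus the proof of item~2 genuinely depends on Theorem~\ref{geometry2}, and your proposal omits this mechanism entirely.
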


The next theorem corresponds to the case
where $\lim_{n\to \infty} |A_{M_n}|(\vec{0})=\infty$ and
$\lim_{n\to \infty} I_{M_n}(\vec{0})=\infty$.

\begin{theorem}\label{geometry1}
Suppose that  $\{M_n\}_{n\in\mathbb N}$
is a sequence of compact $H_n$-surfaces in $\rth$
with finite genus at most $k$, $\vec{0}\in M_n$, $M_n$
contains no spherical components,
$\partial M_n\subset [\rth -\B(n)]$, the sequence has
locally positive injectivity radius in $\rth$,
$\lim_{n\to \infty} |A_{M_n}|(\vec{0})=\infty$ and
$\lim_{n\to \infty} I_{M_n}(\vec{0})=\infty$.

Let $\cS\subset \rth$
denote the $x_3$-axis.
Then, after replacing   by a subsequence and applying a
fixed rotation that fixes the origin:  \ben[1.]
\item $\{M_n\}_{n\in\mathbb N}$ converges with respect to the $C^\a$-norm,
for any $\a\in(0,1)$,  to the
minimal foliation $\cL$ of
$\,\rth-\cS$ by horizontal planes punctured at points in $\cS$.
\item For any $R>0$ there exists $n_0\in \N$ such that for $n>n_0$,
there exists a possibly disconnected compact subdomain
$\cC_n$ of $M_n$, with $[M_n\cap \B(R/2)]\subset \cC_n \subset \B(R)$
and with $\partial \cC_n\subset \B(R)-\B(R/2)$,
consisting of a disk $\cD_n$ containing the origin and
possibly a second disk that intersects $\B(R/n)$, where
each disk has intrinsic diameter less than $3R$.

\item Away from  $\cS$,
each component of $ \cC_n$ consists of two multi-valued graphs spiraling
together to form a double spiral staircase (see Remark~\ref{remark:spiral} for an explicit
geometric description of the  double spiral staircase structure for \,$ \cC_n$). \een
\end{theorem}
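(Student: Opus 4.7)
The plan is to adapt the Colding--Minicozzi structure theorem for limits of embedded minimal disks to the CMC setting, using as principal tools the chord-arc, curvature and injectivity-radius estimates for embedded $H$-disks proved in~\cite{mt8,mt7,mt13,mt9} and the $H$-lamination compactness framework of~\cite{mt9}. First I would use $\vec{0}\in M_n$, $\partial M_n\subset[\rth-\B(n)]$ and $\lim_{n\to\infty}I_{M_n}(\vec{0})=\infty$ to locate, for each fixed $R>0$ and all $n$ large, a topological disk $\cD_n\subset M_n$ containing $\vec{0}$, of intrinsic diameter at most $3R$, with $M_n\cap\B(R/2)\subset\cD_n\subset\B(R)$ and $\partial\cD_n\subset\B(R)-\B(R/2)$. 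The existence of such $\cD_n$ follows from the CMC chord-arc estimates of~\cite{mt8,mt9}, which say that large intrinsic balls in an embedded $H$-disk are contained in comparable extrinsic balls. A possible second topological disk in $\cC_n$ comes from a deeper blow-up scale concentrated near the axis inside $\B(R/n)$, extracted by a diagonal subsequence argument.

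Next, to identify the $C^\a$-limit and the blow-up set, I would apply the $H$-lamination limit theorem of~\cite{mt9} to the sequence $\{\cD_n\}$, which has locally positive injectivity radius by hypothesis. After passing to a subsequence, $\{M_n\}$ subconverges to a minimal lamination of $\rth-\cS$, where $\cS$ is the curvature blow-up locus containing $\vec 0$. The no-spherical-components assumption, the uniform genus bound and the maximum principle, together with the fact that each $\cD_n$ is simply connected so limit leaves cannot carry nontrivial topology, force every limit leaf to be a plane. The CMC removable-singularity theorem of~\cite{mt13} then shows that $\cS$ must be a single properly embedded curve through $\vec{0}$, and the parallel planar foliation structure forces $\cS$ to be a straight line; after a rotation this is the $x_3$-axis and the leaves are horizontal planes, giving the foliation $\cL$ of item~(1).

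For item~(3), each component of $\cC_n$ is a disk converging smoothly on compact subsets of $\B(R)-\cS$ to the horizontal foliation. A standard graphical comparison with the horizontal planes, combined with the curvature bound away from $\cS$, shows that over any annular region of the form $\B(R)$ minus a small tubular neighborhood of $\cS$ the disk decomposes into disjoint multi-valued graphs over the $x_1x_2$-plane. Connectedness of the disk and the multiplicity-at-most-two property inherited from Theorem~\ref{H-lam-thm} then force these multi-graphs to assemble along $\cS$ into exactly two multi-valued graphs that spiral together as a double staircase.

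The main obstacle is the second step, namely ruling out more exotic laminar limits and showing that $\cS$ reduces to a single vertical line rather than a thicker closed set or a nonstraight Lipschitz arc. In the minimal case this is handled by Colding--Minicozzi's one-sided curvature estimate together with the classification of minimal laminations with isolated singularities. For $H>0$ the corresponding one-sided curvature estimate is considerably more subtle; it is exactly the content of~\cite{mt7,mt13}, and its application in a tubular neighborhood of $\cS$, combined with the CMC removable-singularity theorem, is the technical heart of the argument.
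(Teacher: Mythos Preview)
Your overall strategy---extract disks via chord-arc, apply a disk limit theorem, identify the singular set as a line---is the right shape, but there are genuine gaps in the execution that correspond exactly to where the paper has to work hardest.

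First, your use of chord-arc is misstated and over-reaches. The weak chord-arc property (Theorem~\ref{thm1.1}) says that the \emph{extrinsic} component $M_n(\vec 0,\de_1 R)$ is a disk contained in the \emph{intrinsic} ball $B_{M_n}(\vec 0,R/2)$, not the reverse. More importantly, it only controls the component through $\vec 0$; it gives you no information whatsoever about other components of $M_n\cap\B(R/2)$. So your claim that chord-arc alone yields $M_n\cap\B(R/2)\subset\cD_n$ is false. The paper handles this by first applying the Limit Lamination Theorem for $H$-disks (Theorem~\ref{thm2.1}) to the disk $M(n)$ through $\vec 0$, which already gives the foliation limit and shows $\cS$ equals the $x_3$-axis for the sequence $M(n)$; then, if some other component $\Delta_n(R)$ of $M_n\cap\B(R/2)$ exists, one finds points $y_n\in\Delta_n(R)$ with $y_n\to\vec 0$ and must prove $\lim_n I_{M_n}(y_n)=\infty$ in order to run the disk argument again. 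This is Claim~\ref{claim:Inj=Inf}, and it is the technical heart of the proof: it requires a geodesic-loop argument showing that a hypothetical bound on $I_{M_n}(y_n)$ would produce geodesic loops $\a_n$ converging to a vertical segment, and then the strong chord-arc estimate (Theorem~\ref{main2}) yields a contradiction. Your phrase ``a deeper blow-up scale concentrated near the axis'' does not capture this; the second disk is at the \emph{same} scale, and the difficulty is that a priori you know nothing about the injectivity radius at its basepoint.

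Second, once you have two candidate disks, you still need to rule out a third component. The paper does this with Corollary~\ref{cest-cor}: three pairwise disjoint $H$-disks meeting a small ball force a curvature bound there, contradicting $\vec 0\in\cS$. You do not mention this step.

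Third, your appeal to ``the multiplicity-at-most-two property inherited from Theorem~\ref{H-lam-thm}'' is incorrect: that theorem requires locally bounded second fundamental form, which is precisely what fails under the hypotheses of Theorem~\ref{geometry1}. The multiplicity bound here comes instead from Corollary~\ref{cest-cor} and the disk structure given by Theorem~\ref{thm2.1}. Similarly, the reduction of $\cS$ to a single line is not obtained via a removable-singularity theorem as you suggest; rather, once one has the disks $M(n)$ with $\partial M(n)\subset\partial\B(m)$ and $m\to\infty$, Case~B of Theorem~\ref{thm2.1} applied directly identifies the limit as the horizontal foliation singular along a line, and the one-sided curvature estimate (Corollary~\ref{cest2}) propagates this to all of $M_n$.

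Finally, you never observe that $\lim_n H_n=0$, which is an immediate but essential consequence of Theorem~\ref{cest} combined with $|A_{M_n}|(\vec 0)\to\infty$ and the injectivity-radius lower bound; without it the limit lamination need not be minimal.
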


The last theorem corresponds to the case
where $\lim_{n\to \infty} |A_{M_n}|(\vec{0})=\infty$ and
$\lim_{n\to \infty} I_{M_n}(\vec{0})=C$.

\begin{theorem}\label{geometry2}
Suppose that  $\{M_n\}_{n\in\mathbb N}$
is a sequence of compact $H_n$-surfaces in $\rth$
with finite genus at most $k$, $\vec{0}\in M_n$, $M_n$
contains no spherical components,
$\partial M_n\subset [\rth -\B(n)]$, the sequence has
locally positive injectivity radius in $\rth$,
$\lim_{n\to \infty} |A_{M_n}|(\vec{0})=\infty$ and
$\lim_{n\to \infty} I_{M_n}(\vec{0})=C$, for some $C>0$.

Let $\cS_0=\{(0,0,t)\mid t\in \R\}$,
$\cS_C=\{(C,0,t)\mid t\in \R\}$ and $\cS=\cS_0\cup \cS_C$.
Then, after replacing   by a subsequence and applying a
fixed rotation that fixes the origin: \ben[1.]
\item $\{M_n\}_{n\in\mathbb N}$ converges with respect to the $C^\a$-norm,
for any $\a\in(0,1)$,  to the
minimal foliation $\cL$ of
$\,\rth-\cS$ by horizontal planes punctured at points in $\cS$.
\item Given $R>C$ there exists $n_0\in\N$ such that for $n>n_0$, the
subdomain $\Delta_n$ of $M_n\cap \B(R)$
that intersects $\B(\frac R4)$ is a planar domain. In fact, $\Delta_n$
consists of a connected planar domain $\Delta_1(n)$ containing
the origin and possibly a
second connected planar domain $\Delta_2(n)$ and $\Delta_2(n)\cap\B(\frac Rn)\neq \O$.
Moreover, the intrinsic distance in $M_n$ between any two points in the same
connected component of $\Delta_n$ is less than $3R$.
Away from $\cS$, each  component of $\Delta_n$ consists of exactly two
multi-valued graphs spiraling
together.
Near $\cS_0$ and $\cS_C$, the pair of multi-valued graphs
form double spiral staircases
with opposite handedness  (see Remark~\ref{remark:spiral} for a
geometric description of each of the 1 or 2 components of $ \Delta_n$ near
points of $\cS$). Thus, circling only $\cS_0$ or only $\cS_C$
in $\Delta_n$ results in
going either up or down, while a path circling both $\cS_0$ and $\cS_C$ closes up.
\item There exist
simple closed oriented curves $\g_n \subset M_n$ converging
to the line segment joining the pair of
 points in $\cS\cap\{x_3=0\}$ and having lengths converging to
 $2C$ and fluxes converging to $(0,2C,0)$.
\een
\end{theorem}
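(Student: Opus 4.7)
The plan is to follow the general blueprint of the proof of Theorem~\ref{geometry1}, modified to account for the finite injectivity radius limit $C>0$. The H-lamination compactness results of \cite{mt8,mt9} yield, after passing to a subsequence, a lamination limit of $\{M_n\}$ on $\rth\setminus \cS$, where $\cS$ denotes the closed set of points where the second fundamental forms of the $M_n$ blow up; by hypothesis $\vec 0 \in \cS$. The finite genus and locally positive injectivity radius assumptions, combined with the regularity theorems of \cite{mt7,mt13}, force every leaf of the limit lamination to be a plane, and after a rotation fixing $\vec 0$ all leaves are horizontal. Thus item~(1) reduces to identifying $\cS$.

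The key new step is to show that $\cS$ consists of exactly two parallel vertical lines separated by horizontal distance $C$. The hypothesis $I_{M_n}(\vec 0) \to C$ produces homotopically nontrivial based loops $\sigma_n \subset M_n$ at $\vec 0$ with lengths tending to $2C$. Since the leaves of the limit foliation are simply connected, these loops must detect topology coming from $\cS$. A single component of $\cS$ alone would yield only a double spiral staircase structure (as in Theorem~\ref{geometry1}), whose nontrivial closed loops have macroscopic vertical displacement; hence one singular line cannot accommodate the short closed loops $\sigma_n$. Therefore $\cS$ has at least two components, and the handedness of the spiral staircases near them must be opposite so that vertical displacements cancel along $\sigma_n$. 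Analyzing the geometric limit of the $\sigma_n$ pins down the second component as a vertical line at horizontal distance exactly $C$ from the first; a further rotation places them at $\cS_0$ and $\cS_C$, completing item~(1).

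With $\cS$ identified, item~(2) follows by applying the local structure theorems of \cite{mt7,mt13} at each of $\cS_0$ and $\cS_C$. Each vertical singular line produces the claimed double spiral staircase description for the pieces of $M_n$ approaching it, and the opposite handedness forced by the short loops $\sigma_n$ yields the closing-up behavior described in the statement when a curve circles both lines. That $\Delta_n$ is a planar domain follows because $M_n$ has genus at most $k$ and converges to a foliation by planes, so for $n$ large all handles of $M_n$ lie outside $\B(R)$. The possible second component $\Delta_2(n)$ accounts for part of $M_n$ that exits and re-enters $\B(R)$ along a long spiral while still intersecting $\B(R/n)$, and the intrinsic diameter bound $3R$ is read off from the explicit spiral staircase geometry.

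The technical heart of the argument is item~(3). I would construct $\g_n$ by taking a small horizontal loop encircling the segment from $\cS_0 \cap \{x_3=0\}$ to $\cS_C\cap \{x_3=0\}$ in a leaf of the limit foliation near $\{x_3=0\}$, and lifting it to $\Delta_n$ via the $C^\a$-convergence; the opposite handedness at $\cS_0$ and $\cS_C$ guarantees that the lift closes up for all sufficiently large $n$. As the tubular radius shrinks, $\g_n$ collapses onto the segment traversed twice, which gives length convergence to $2C$. For the flux, defined as in \cite{mt3} via the conormal integral with an $H_n$-correction on a bounded chain, the $H_n$-correction vanishes in the limit because the enclosed region collapses to zero area, while the conormal contribution, once the winding introduced by the opposite-handed spirals is tracked, converges to $(0,2C,0)$. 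The main obstacle is controlling the conormal convergence precisely enough to pin down the exact limit vector; this requires the explicit local graph descriptions coming from the spiral staircase theorems and careful tracking of orientations along $\g_n$ as $n\to\infty$.
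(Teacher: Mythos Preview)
Your outline captures the overall architecture correctly, and your construction of the flux loops in item~(3) is essentially equivalent to the paper's ``connection loop'' construction (the paper lifts the segment $\overline{\vec 0\,p_2}$ twice and joins the endpoints by short arcs, which is the limit of your encircling-loop construction as the tubular radius shrinks). However, there is a genuine gap in your identification of $\cS$.

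You argue that $\cS$ has at least two components because the short based loops $\sigma_n$ of length $\to 2C$ cannot close up around a single spiral-staircase column, and you then infer opposite handedness from the requirement that vertical displacements cancel along $\sigma_n$. What is missing is any argument that $\cS$ has \emph{at most} two components. Nothing in your sketch prevents $\cS$ from containing three or more vertical lines, and the loops $\sigma_n$ only see the geometry near $\vec 0$; they give no control over additional singular lines far from the origin. The paper handles this by a separate genus argument (its Claim~5.2): if two distinct lines in $\cS$ had the same handedness, one could build, as $n\to\infty$, an arbitrarily large collection of pairwise disjoint pairs of simple closed curves in $M_n$, each pair intersecting transversely in a single point, contradicting the uniform genus bound $k$. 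This forces any two lines to have opposite handedness, hence at most two lines total. The opposite-handedness conclusion is thus obtained \emph{before} and independently of the loops $\sigma_n$, not from them. Your route through $\sigma_n$ alone cannot substitute for this, because you would first need to know that $\sigma_n$ actually visits the second line (the paper establishes this via the chord-arc estimate, Theorem~\ref{main2}, in the argument of Claim~\ref{claim:Inj=Inf}), and even then you learn nothing about possible further components of $\cS$.

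Two smaller points. First, your claim that the distance between the two lines equals $C$ ``by analyzing the limit of $\sigma_n$'' needs both inequalities: the paper rules out $d>C$ via the geodesic-loop/chord-arc argument you allude to, but rules out $d<C$ by a different mechanism, namely that the connection loops would then have nonzero flux while lying inside the intrinsic disk $B_{M_n}(\vec 0,C-\ve)$, a contradiction. Second, your justification that $\Delta_n$ is planar (``all handles lie outside $\B(R)$ for $n$ large'') is not sufficient; convergence to a foliation by planes does not by itself push handles to infinity. The paper's argument (Claim~5.5) uses the almost-periodic vertical structure of the double spiral staircases to show that a single handle in $\Delta_n$ would replicate to arbitrarily many as $n\to\infty$, again violating the genus bound.
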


In~\cite{mt7} we apply the non-zero flux
conclusions in Theorems~\ref{H-lam-thm} and \ref{geometry2} to obtain
curvature estimates away from the boundary
for any compact $1$-annulus in $\rth$ that has  scalar flux that is either
zero or greater than some $\rho>0$;
see Corollary~5.4 in~\cite{mt7} for this result.

The geometric description in item~2 of Theorem~\ref{geometry2} is
identical to the
geometric description of the $H=0$ case given in
Theorem~0.9 of   paper~\cite{cm25} by Colding and Minicozzi,
where in their situation the number of components in $\cC_n(R)$ must be one.
When the hypotheses of Theorem~\ref{geometry1} or \ref{geometry2}
hold, as $n$ approaches infinity
the convergent geometry of the surfaces $M_n$ around
the line or pair of lines in $\cS$
is that of a so-called ``parking garage structure".
See for instance~\cite{mpr14} for  the general notion and theory
of parking garage surfaces in $\rth$ and the notion of the
convergence of these surfaces to
a limit ``parking garage structure". This kind of
limiting structure and its application to
obtain curvature estimates for certain minimal planar
domains in $\rth$ first
appeared in work of Meeks, Perez and Ros in~\cite{mpr1}.

\vspace{.2cm}

\noindent  {\sc Acknowledgements:} The authors would
like to thank Joaquin Perez for
making Figure~\ref{fig2cone}.


\section{Preliminaries.} \label{sec:pre}

Throughout this paper, we use the following notation.
Given $a,b,R>0$, $p\in \rth$ and $\S$ a surface in $\rth$:

\bit
\item $\B(p,R)$ is the open ball of radius $R$ centered at $p$.
\item $\B(R)=\B(\vec{0},R)$, where $\vec{0}=(0,0,0)$.
\item For $p\in \S$, $B_{\S}(p,R)$ denotes the open
intrinsic ball in $\S$ of radius $R$.
\item $A(r_1,r_2)=\{(x_1,x_2,0)\mid r_2^2\leq x_1^2+x_2^2\leq r_1^2\}$.
\eit

We first  introduce the notion of multi-valued graph,
see~\cite{cm22} for further discussion.
Intuitively, an $N$-valued graph is a simply-connected
embedded surface covering an
annulus such that over a neighborhood of each point of the annulus, the
surface consists of $N$ graphs. The stereotypical  infinite multi-valued
graph is half of the helicoid, i.e., half of an infinite
double-spiral staircase.

\begin{figure}[h]
\begin{center}
\includegraphics[width=11.5cm]{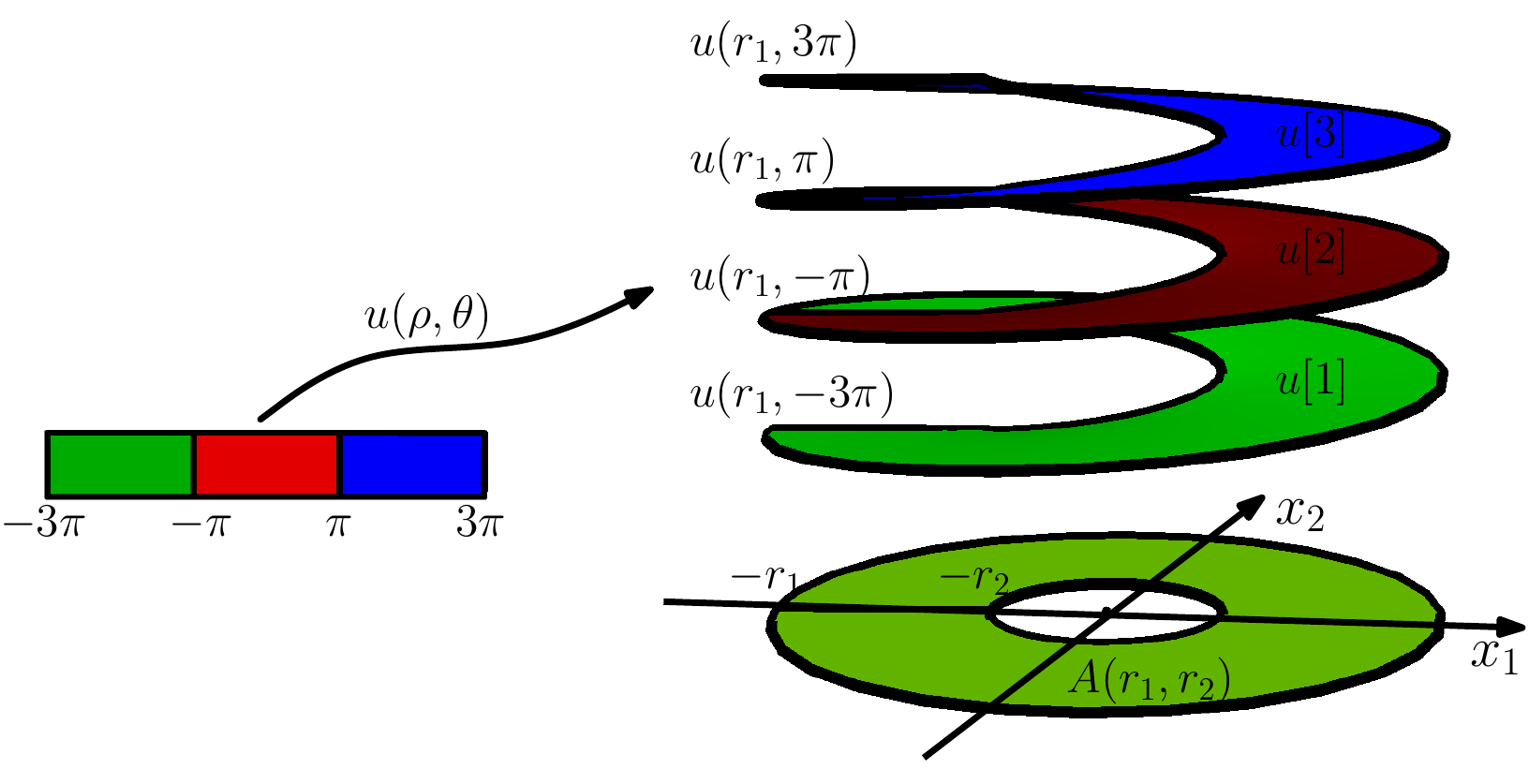}
\caption{A right-handed 3-valued graph.}
\label{3-valuedgraph}
\end{center}
\end{figure}

\begin{definition}[Multi-valued graph]\label{multigraph} {\rm
Let $\mathcal{P}$ denote the universal cover of the
punctured $(x_1,x_2)$-plane,
$\{(x_1,x_2,0)\mid (x_1,x_2)\neq (0,0)\}$, with global coordinates
$(\rho , \theta)$.
\ben[1.] \item
An {\em $N$-valued graph over the annulus $ A(r_1,r_2)$} is a single valued graph
$u(\rho, \theta)$ over $\{(\rho ,\theta )\mid r_2\leq \rho
\leq r_1,\;|\theta |\leq N\pi \}\subset \mathcal{P}$, if $N$ is odd,
or over $\{(\rho ,\theta )\mid r_2\leq \rho
\leq r_1,\;(-N+1)\pi\leq \theta \leq \pi (N+1)\}\subset \mathcal{P}$, if $N$ is even.
\item An  $N$-valued graph $u(\rho,\t)$ over the annulus $ A(r_1,r_2)$ is
called {\em righthanded} \, [{\em lefthanded}] if whenever it makes sense,
$u(\rho,\t)<u(\rho,\t +2\pi)$ \, [$u(\rho,\t)>u(\rho,\t +2\pi)$]
\item The set $\{(r_2,\theta, u(r_2,\theta)), \theta\in[-N\pi,N\pi]\}$ when $N$ is odd
(or $\{(r_2,\theta, u(r_2,\theta)), \theta\in[(-N+1)\pi,(N+1)\pi]\}$ when $N$ is even)
is the {\em inner boundary} of the $N$-valued graph.
\een }
\end{definition}

From Theorem 2.23 in~\cite{mt7} one obtains the following,
detailed geometric description
of an $H$-disk with large
norm of the second fundamental form at the origin. The precise meanings
of certain statements below are
made clear in~\cite{mt7} and we refer the reader to that paper for further details.

\begin{theorem}\label{mainextension}
Given $\ve,\tau>0$   and $\ov{\ve}\in (0,\ve/4)$ there
exist  constants $\Omega_\tau:=\Omega(\tau )$,
$\omega_\tau:=\omega(\tau )$ and  $G_\tau:=G(\ve,\tau,\ov\ve ) $
such that if $M$ is an $H$-disk, $H\in (0,\frac 1{2\ve})$,
$\partial M\subset \partial \B(\ve)$, $\vec 0\in M$ and
$|A_M|(\vec 0)>\frac 1\eta G_\tau$, for $\eta\in (0,1]$, then for
any $p\in \B(\vec{0},\eta\ov{\ve})$ that is a maximum of the
function
$|A_{M}|(\cdot)(\eta\bar\ve-|\cdot|)$, after translating
$M$ by $-p$, the following geometric description of $M$ holds:
\par

\begin{itemize}

\item On the scale of the norm of the second
fundamental form  $M$ looks like one or two helicoids nearby  the
origin and, after a rotation that turns these helicoids into
vertical helicoids, $M$  contains a 3-valued graph
$u$ over  $A(\ve\slash\Omega_\tau,\frac{\omega_\tau}{|A_M|(\vec 0)})$
with norm of its gradient less than $\tau$ and with inner boundary
in $\B(10\frac{\omega_\tau}{|A_M|(\vec 0)})$.

\item Moreover, given $j\in\mathbb N$ if we let the constant $G_\tau$ depend on $j$
as well, then $M$ contains $j$ pairwise disjoint
3-valued graphs with their inner boundaries in
$\B(10\frac{\omega_\tau}{|A_M|(\vec 0)})$.
\end{itemize}

 \end{theorem}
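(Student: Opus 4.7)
The plan is to argue by contradiction via a blow-up/rescaling scheme in the spirit of Colding--Minicozzi's work on embedded minimal disks, extended to the $H$-surface setting using the authors' earlier papers~\cite{mt8,mt9,mt13}. Suppose the conclusion fails for some fixed $\varepsilon,\tau,\bar\varepsilon$. Then there exist a sequence of $H_n$-disks $M_n$ with $H_n\in(0,1/(2\varepsilon))$, $\partial M_n\subset\partial\B(\varepsilon)$, $\vec 0\in M_n$, and parameters $\eta_n\in(0,1]$ such that $|A_{M_n}|(\vec 0)>\eta_n^{-1}G_n$ with $G_n\to\infty$, yet for which no interior maximizer $p_n\in\B(\vec 0,\eta_n\bar\varepsilon)$ of the weighted curvature $|A_{M_n}|(\cdot)(\eta_n\bar\varepsilon-|\cdot|)$ yields the stated picture after translation by $-p_n$.

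Because the weight $\eta_n\bar\varepsilon-|\cdot|$ vanishes on $\partial\B(\vec 0,\eta_n\bar\varepsilon)$, an interior maximizer $p_n$ exists and its weighted value dominates that at $\vec 0$, giving $\lambda_n:=|A_{M_n}|(p_n)\to\infty$ and $\lambda_n(\eta_n\bar\varepsilon-|p_n|)\to\infty$. The standard scale-invariant consequence of the maximality is that the rescaled surface $\widetilde M_n:=\lambda_n(M_n-p_n)$ has second fundamental form bounded by $2$ on a Euclidean ball of radius $\tfrac12(\eta_n\bar\varepsilon-|p_n|)\lambda_n$, which tends to infinity. Each $\widetilde M_n$ is an embedded $(H_n/\lambda_n)$-disk through $\vec 0$ with $|A_{\widetilde M_n}|(\vec 0)=1$, and since $H_n/\lambda_n\to 0$, a subsequence converges smoothly, with multiplicity one or two, on compact subsets of $\R^3$ to a complete embedded minimal surface $M_\infty$ through the origin with $|A_{M_\infty}|(\vec 0)=1$.

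Using the classification of complete simply connected embedded minimal surfaces of positive injectivity radius, together with the chord-arc and removable-singularity machinery from~\cite{mt13,mt9}, the limit $M_\infty$ is, after a rotation fixing the origin, a vertical helicoid: multiplicity-one convergence corresponds to the single-helicoid alternative and multiplicity-two convergence to the two-helicoids alternative of the statement. On the helicoid $M_\infty$ one directly produces, for any $\tau>0$, a $3$-valued graph over any annulus $A(r_1,r_2)$ of sufficiently small inner radius with gradient bounded by $\tau/2$, and in fact $j$ pairwise disjoint such graphs once $r_1/r_2$ is large enough. Transferring these graphs back through the smooth convergence to $\widetilde M_n$ and then undoing the rescaling by $\lambda_n$ produces, on $M_n$ for $n$ large, $3$-valued graphs over $A(\varepsilon/\Omega_\tau,\omega_\tau/\lambda_n)$ with inner boundary in $\B(10\omega_\tau/\lambda_n)$ and gradient bounded by $\tau$, contradicting the assumed failure. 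The constants $\Omega_\tau,\omega_\tau,G_\tau$ are read off from the quantitative version of this extraction.

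The main obstacle is handling the multiplicity-two case of the convergence: one must verify that the two sheets genuinely desingularize into a pair of helicoidal pieces rather than collapsing to a non-embedded limit or producing uncontrolled behavior near the axis, and one must ensure that the multi-valued graph structure survives the passage through the possibly singular set of the lamination limit. This is precisely where the hypothesis that $M$ is a disk (zero genus), combined with the quantitative one-sided curvature estimates and chord-arc bounds for $H$-disks developed in~\cite{mt8,mt7}, is used in an essential way to control both sheets on each side of the emerging axis and to extract uniform graphical representations at the rescaled level.
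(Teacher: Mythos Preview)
First, note that the paper does not prove this theorem here: it is quoted from Theorem~2.23 of \cite{mt7} and is listed among the preliminaries, so there is no in-paper proof to compare against. Your proposal must therefore be judged on its own merits.

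There is a genuine gap. Your blow-up correctly produces, on the rescaled surfaces $\widetilde M_n=\lambda_n(M_n-p_n)$, smooth convergence on compact subsets of $\R^3$ to a (one- or two-sheeted) helicoid, and hence $3$-valued graphs over any \emph{fixed} annulus $A(r_1,r_2)$ at the rescaled level. Undoing the rescaling, this yields $3$-valued graphs over $A(r_1/\lambda_n,\,r_2/\lambda_n)$ in $M_n$: both radii tend to zero. The conclusion of the theorem, however, demands a $3$-valued graph over $A(\varepsilon/\Omega_\tau,\,\omega_\tau/\lambda_n)$, whose \emph{outer} radius $\varepsilon/\Omega_\tau$ is a fixed macroscopic scale independent of $n$. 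Your sentence ``Transferring these graphs back through the smooth convergence \ldots\ produces, on $M_n$ for $n$ large, $3$-valued graphs over $A(\varepsilon/\Omega_\tau,\omega_\tau/\lambda_n)$'' is exactly the step that fails: compact convergence at the blow-up scale gives you nothing at the original, unrescaled scale.

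Bridging from the microscopic annulus to the macroscopic one is the entire content of the Colding--Minicozzi \emph{extension} argument (and its $H$-surface analogue in \cite{mt7}): once a small initial multi-valued graph forms, one uses the one-sided curvature estimate iteratively, together with control on the separation and gradient of the sheets (sublinear growth/decay estimates for the difference of consecutive sheets), to propagate the graphical structure outward through a dyadic sequence of annuli until reaching radius $\varepsilon/\Omega_\tau$. This extension is not a consequence of the blow-up limit and is where the constants $\Omega_\tau$ and $G_\tau$ actually arise. Your outline captures the ``looks like a helicoid at the scale of curvature'' clause, but not the core assertion that the $3$-valued graph persists out to a definite extrinsic radius; without supplying the extension step, the proof is incomplete.
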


  Theorem~\ref{mainextension}
 was inspired by the pioneering work of Colding and Minicozzi  in the
 minimal case~\cite{cm21,cm22,cm24,cm23};
however in the constant positive mean curvature setting this
description has led to a different conclusion, that is the existence
of the intrinsic curvature estimates stated
below.

\begin{theorem}[Intrinsic curvature estimates, Theorem~1.3 in~\cite{mt7}] \label{cest}
Given $\delta,\cH>0$, there exists a constant $K (\delta,\cH)$ such that
for any $H$-disk $\cD$ with $H\geq \cH$,
{ $${{\sup}_{ \{p\in \cD \, \mid \, d_{\cD}(p,\partial
\cD)\geq \delta\}} |A_\cD|\leq  K (\delta,\cH)}.$$}
\end{theorem}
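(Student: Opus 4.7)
The plan is to argue by contradiction, rescaling and point-picking, and then derive an impossibility from the presence of arbitrarily many nearly parallel CMC sheets. Suppose the theorem fails. Then there exist $\delta,\mathcal{H}>0$ and a sequence of $H_n$-disks $\mathcal{D}_n$ with $H_n\geq \mathcal{H}$ together with points $p_n\in \mathcal{D}_n$ satisfying $d_{\mathcal{D}_n}(p_n,\partial \mathcal{D}_n)\geq \delta$ and $|A_{\mathcal{D}_n}|(p_n)\to\infty$. The natural first move is a standard point-picking argument: inside each intrinsic ball $B_{\mathcal{D}_n}(p_n,\delta/2)$ choose $q_n$ to maximize the function $q\mapsto |A|(q)\bigl(\delta/2 - d_{\mathcal{D}_n}(p_n,q)\bigr)$. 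After this replacement, $|A|(q_n)\to\infty$, the intrinsic distance from $q_n$ to $\partial\mathcal{D}_n$ is still bounded below by a positive constant, and on a suitably small intrinsic neighborhood of $q_n$ the second fundamental form is comparable to $|A|(q_n)$.

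Next, I would invoke Theorem~\ref{mainextension} at $q_n$ with parameter $\eta$ chosen small enough that the extrinsic ball $\B(q_n,\eta\bar\ve)$ sits inside the intrinsic $\delta/3$-ball of $q_n$ (here chord-arc estimates for $H$-disks from~\cite{mt13} guarantee that, near a point of very large curvature, intrinsic and extrinsic distances on the relevant scale are comparable). Since $H_n\leq 1/(2\ve)$ is automatic on a sufficiently small extrinsic scale, the second conclusion of Theorem~\ref{mainextension} applies with $j$ arbitrarily large. Passing to a diagonal subsequence, one obtains for every $j\in\mathbb{N}$ some $n_j$ such that $\mathcal{D}_{n_j}$ contains $j$ pairwise disjoint 3-valued graphs over an annulus $A(\ve/\Omega_\tau,\omega_\tau/|A|(q_{n_j}))$ with very small gradient and inner boundary concentrated near $q_{n_j}$.

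The heart of the argument, and the main obstacle, is to turn this proliferation of flat sheets into an actual contradiction using $H_n\geq \mathcal{H}$. In the minimal case, the multi-valued graphs are compatible with a helicoid blow-up limit; in the CMC setting they are not. One way to exploit this is as follows. Each sheet is nearly horizontal and carries a consistent mean curvature vector of length at least $\mathcal{H}$. Between two consecutive sheets of a 3-valued graph one has a slab of vertical thickness comparable to the pitch, which after rescaling tends to zero. Using either a sphere of radius $1/\mathcal{H}$ as a barrier sliding from far away, or a Delaunay-type comparison surface, the maximum principle for CMC graphs then forces the two sheets to have mean curvature vectors pointing into the slab, while constancy of the orientation across the connected surface forces them to point in the same direction; once the pitch is smaller than $1/\mathcal{H}$ this is impossible. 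This provides the desired contradiction.

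The delicate technical point, and where I expect most of the work to lie, is showing that the multi-valued graph structure produced by Theorem~\ref{mainextension} on the scale $1/|A|(q_n)$ persists \emph{outward} up to a macroscopic scale controlled purely by $\mathcal{H}$ and $\delta$, independent of $|A|(q_n)$. Without such an extension one cannot apply the CMC barrier argument, because the barriers needed have size $\sim 1/\mathcal{H}$. To achieve the extension one would iterate Theorem~\ref{mainextension} using the intrinsic hypothesis $d_{\mathcal{D}_n}(q_n,\partial\mathcal{D}_n)\geq \delta/2$ to prevent boundary interference, combined with the chord-arc type results from~\cite{mt13} to translate intrinsic control into extrinsic control. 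Once the multi-valued graph exists on a scale $\geq c(\mathcal{H},\delta)$, the barrier step above yields the contradiction and hence the curvature bound $K(\delta,\mathcal{H})$.
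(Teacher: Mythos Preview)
This theorem is not proved in the present paper: it is simply quoted as Theorem~1.3 of~\cite{mt7} and used throughout as an input. So there is no proof here to compare your attempt against. The only indication the paper gives about the argument in~\cite{mt7} is the sentence immediately preceding the statement, namely that the geometric description in Theorem~\ref{mainextension} ``has led to'' the curvature estimate; your decision to build the contradiction on Theorem~\ref{mainextension} is therefore consistent with what~\cite{mt7} actually does.

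Two remarks on your outline itself. First, the difficulty you single out as the main one --- extending the multi-valued graph from the curvature scale $1/|A|(q_n)$ out to a macroscopic scale --- is not really there. Read the conclusion of Theorem~\ref{mainextension} again: the $3$-valued graph it produces lies over the annulus $A(\ve/\Omega_\tau,\,\omega_\tau/|A_M|(\vec 0))$, whose \emph{outer} radius $\ve/\Omega_\tau$ is already a fixed macroscopic quantity depending only on $\ve$ and $\tau$, not on $|A|$. The extension is built into the statement. Second, be careful with logical dependencies: you invoke the chord-arc results of~\cite{mt13} to pass from the intrinsic hypothesis $d_{\mathcal D_n}(q_n,\partial\mathcal D_n)\geq\delta/2$ to the extrinsic hypothesis $\partial M\subset\partial\B(\ve)$ that Theorem~\ref{mainextension} requires, but~\cite{mt13} is downstream of~\cite{mt7} and may well use Theorem~\ref{cest}; inside~\cite{mt7} that conversion has to be arranged without appealing to later work. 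Finally, your barrier step (two nearly flat $H$-sheets at separation much smaller than $1/\mathcal H$ must carry opposing mean curvature vectors, contradicting the connectedness of the graph) is the right flavor of contradiction, but as written it is a heuristic: to make it work you need the sheets to persist as graphs over a domain of definite diameter comparable to $1/\mathcal H$, so that a sphere or nodoid barrier can actually be slid in without hitting lateral boundary.
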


\vspace{.1cm}

Rescalings of a helicoid
give a sequence of embedded minimal disks with arbitrarily
large norm of the second fundamental
form at points arbitrarily far
from its boundary; therefore in the minimal setting, similar
curvature estimates do not hold.

The next two results from~\cite{mt9} will also be
essential tools that we use in this paper.

\begin{theorem}[Extrinsic one-sided curvature estimates for $H$-disks] \label{th}
There exist $\ve\in(0,\frac{1}{2})$ and
$C \geq 2 \sqrt{2}$ such that for any $R>0$, the following holds.
Let $\cD$ be an $H$-disk such that $$\cD\cap \B(R)\cap\{x_3=0\}
=\O \quad \mbox{and} \quad \partial \cD\cap \B(R)\cap\{x_3>0\}=\O.$$
Then:
\begin{equation} \label{eq1}
\sup _{x\in \cD\cap \B(\ve R)\cap\{x_3>0\}} |A_{\cD}|(x)\leq \frac{C}{R}.
\end{equation} In particular, if
$\cD\cap \B(\ve R)\cap\{x_3>0\}\neq\O$, then $H\leq \frac{C}{R}$.
\end{theorem}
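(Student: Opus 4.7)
The plan is to argue by contradiction. If no such $\ve, C$ exist, we extract sequences $R_n > 0$, $H_n \geq 0$, $H_n$-disks $\cD_n$ satisfying the hypotheses with $R = R_n$, and points $x_n \in \cD_n \cap \B(\ve_n R_n) \cap \{x_3 > 0\}$ with $|A_{\cD_n}|(x_n) > C_n/R_n$, where $\ve_n \to 0$ and $C_n \to \infty$. Rescaling by $1/R_n$, we assume $R_n = 1$; then $x_n \to \vec 0$ and $|A_{\cD_n}|(x_n) \to \infty$. The first observation is the intrinsic separation $d_{\cD_n}(x_n, \partial \cD_n) \geq 1 - |x_n|$: any intrinsic path from $x_n$ to $\partial \cD_n$ must first exit $\B(1)$, since $\cD_n$ does not cross $\{x_3 = 0\}$ inside $\B(1)$ and $\partial \cD_n$ does not enter $\B(1) \cap \{x_3 > 0\}$.

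If a subsequence of $H_n$ is bounded below by some $H_0 > 0$, then Theorem~\ref{cest} with $\delta = 1/2$ gives $|A_{\cD_n}|(x_n) \leq K(1/2, H_0)$, contradicting $|A_{\cD_n}|(x_n) \to \infty$. So we may pass to a subsequence with $H_n \to 0$. In this remaining case, apply Theorem~\ref{mainextension} at a nearby point $p_n$ of large curvature: after translating by $-p_n$ and applying a rotation $\Phi_n$ (extracting so $\Phi_n \to \Phi_\infty$), the translated disks contain arbitrarily many disjoint 3-valued graphs of small gradient, extending from an inner radius $\omega_\tau/|A|(p_n) \to 0$ to the fixed macroscopic outer radius $\ve/\Omega_\tau > 0$. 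A blow-up by $|A|(p_n)$, together with $H_n \to 0$, yields a smooth limit $\Sigma_\infty$: a complete properly embedded minimal surface in $\rth$ with $|A_{\Sigma_\infty}|(\vec 0) = 1$, exhibiting a helicoidal double-spiral structure through the origin.

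Finally, the non-crossing condition $\cD_n \cap \B(1) \cap \{x_3 = 0\} = \emptyset$ passes to the limit as $\Sigma_\infty \cap \Phi_\infty(\{x_3 = 0\}) = \emptyset$ in a neighborhood of the origin, which by the strong maximum principle is global, placing $\Sigma_\infty$ on one side of a plane. The half-space theorem for properly embedded minimal surfaces then forces $\Sigma_\infty$ to be a plane, contradicting $|A_{\Sigma_\infty}|(\vec 0) = 1$. This proves the curvature bound $|A_\cD| \leq C/R$ on $\cD \cap \B(\ve R) \cap \{x_3 > 0\}$; the ``in particular'' clause follows immediately, since $H \leq |A_\cD|$ pointwise. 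The main obstacle is the rigorous blow-up compactness: extracting a smooth limit on compact subsets of $\rth$ from rescaled disks that may have unbounded curvature away from $\vec 0$, and confirming that both the helicoidal structure (excluding a planar limit) and the one-sided non-crossing property persist under the limit.
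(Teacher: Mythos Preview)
First, note that the paper does not itself prove Theorem~\ref{th}; it is quoted from~\cite{mt9} as a preliminary tool, so there is no ``paper's own proof'' to compare against here.

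On the merits of your argument, there is a genuine gap in the final step. You blow up the translated disk $\cD_n - p_n$ by the factor $\lambda_n := |A_{\cD_n}|(p_n) \to \infty$ and obtain a non-flat minimal limit $\Sigma_\infty$ with $|A_{\Sigma_\infty}|(\vec 0)=1$. You then assert that the barrier condition $\cD_n \cap \B(1) \cap \{x_3=0\} = \emptyset$ survives the blow-up and puts $\Sigma_\infty$ on one side of a plane. But track the plane through your transformations: after translating by $-p_n$ the plane becomes $\{x_3 = -x_3(p_n)\}$, and after rescaling by $\lambda_n$ it becomes $\{x_3 = -\lambda_n\, x_3(p_n)\}$. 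You only know $x_3(p_n)\to 0$ and $\lambda_n\to\infty$; there is no control on the product $\lambda_n\, x_3(p_n)$, so the barrier plane can (and generically will) escape to infinity under the blow-up. In that case $\Sigma_\infty$ sits in all of $\rth$ with no half-space constraint, and the Half-space Theorem gives no contradiction. Your phrasing ``$\Sigma_\infty \cap \Phi_\infty(\{x_3=0\}) = \emptyset$'' suggests you are applying only the rotation to the original plane, forgetting that the translation by $-p_n$ moves it off the origin before the rescaling, which is exactly where the argument breaks.

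This is not a technicality: it is precisely why one-sided curvature estimates are delicate. The actual proofs (Colding--Minicozzi in the minimal case, and~\cite{mt9} for $H$-disks) work at the \emph{macroscopic} scale, using that the multi-valued graphs produced by Theorem~\ref{mainextension} extend out to a \emph{fixed} radius $\ve/\Omega_\tau$ while their inner boundaries and base heights collapse to $\{x_3=0\}$; one then argues directly that such a configuration is incompatible with staying on one side of the plane inside $\B(1)$. A second caution: you invoke Theorems~\ref{cest} and~\ref{mainextension} from~\cite{mt7} to prove a result from~\cite{mt9}; you should verify that the logical dependencies among these papers do not make this circular.
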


The next corollary follows immediately from   Theorem~\ref{th} by a simple
rescaling argument. It roughly states that
we can replace the  $(x_1,x_2)$-plane   by any surface that has a fixed
uniform estimate on the norm of its second fundamental form.

\begin{corollary} \label{cest2}
Given an $a\geq 0$,
there exist $\ve\in(0,\frac{1}{2})$ and $C_{a} >0$ such that
for any $R>0$, the following holds.
Let $\Delta$ be a compact immersed surface in $\B(R)$ with
$\partial \Delta \subset \partial \B(R)$, $\vec{0}\in \Delta$
and satisfying $|A_{\Delta}| \leq a/R$. Let $\cD$ be an $H$-disk such that
$$\cD\cap \B(R)\cap\Delta=\O \quad \mbox{and} \quad \partial \cD\cap \B(R)=\O.$$
Then:
\begin{equation} \label{eq1*}
\sup _{x\in \cD\cap \B(\ve R)} |A_{\cD}|(x)\leq \frac{C_{a}}{R}.
\end{equation} In particular, if $\cD\cap \B(\ve R)\neq \O$, then $H\leq \frac{C_{a}}{R}$.
\end{corollary}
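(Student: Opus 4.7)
The plan is to exploit the fact that on a scale determined by $a$, the surface $\Delta$ is graphical over its tangent plane at $\vec 0$, so that Theorem~\ref{th} applies after a small translation of the separating plane. First I rescale by $1/R$ to reduce to the case $R=1$: then $|A_\Delta|\leq a$, $\vec 0 \in \Delta$, $\partial\Delta\subset \partial\B(1)$, and $\partial \cD\cap \B(1)=\O$. A fixed rotation puts $T_{\vec 0}\Delta = \{x_3 = 0\}$.

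Using the standard fact that an immersed surface with bounded second fundamental form is graphical on a controlled scale over its tangent plane, I choose $r_0 = r_0(a) \in (0, 1)$ small enough that the component $\Delta_0$ of $\Delta \cap \B(r_0)$ through $\vec 0$ is an embedded $C^{1,1}$ graph $x_3 = u(x_1, x_2)$ with $|u| \leq \alpha r_0$ and $|\nabla u| \leq 1/2$, where $\alpha=\alpha(a)$ will be fixed below in terms of the universal $\ve$ from Theorem~\ref{th}. Then $\Delta_0 \subset \{|x_3| \leq \alpha r_0\}$ separates $\B(r_0/2)$ into two open regions. Since $\cD \cap \Delta=\O$ in $\B(1)$, every connected component of $\cD \cap \B(r_0/2)$ lies either entirely above $\Delta_0$ (hence in $\{x_3 > -\alpha r_0\}$) or entirely below (hence in $\{x_3 < \alpha r_0\}$).

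For the above-the-graph components I apply Theorem~\ref{th} with its origin translated to $p^- = (0, 0, -2\alpha r_0)$, its separating plane replaced by $P^- = \{x_3 = -2\alpha r_0\}$, and its radius set to $R_0 = r_0/2$. The hypotheses $\cD \cap \B(p^-, R_0) \cap P^- = \O$ and $\partial \cD \cap \B(p^-, R_0) \cap \{x_3 > -2\alpha r_0\} = \O$ both hold (the latter because $\B(p^-, R_0) \subset \B(1)$), yielding $|A_\cD| \leq C/R_0 = 2C/r_0$ on $\cD \cap \B(p^-, \ve R_0) \cap \{x_3 > -2\alpha r_0\}$. Choosing $\alpha < \ve/8$ guarantees that $\B(p^-, \ve R_0)$ contains $\B(\vec 0, \ve R_0/2)$, so the estimate holds on all above-graph components of $\cD \cap \B(\vec 0, \ve R_0/2)$; a symmetric argument using the plane $P^+ = \{x_3 = 2\alpha r_0\}$ handles the below-graph components. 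Rescaling back to general $R$ yields the desired bound with $\ve_a := \ve\, r_0(a)/4$ and $C_a := 2C/r_0(a)$.

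The main delicate point is controlling the graphical scale in terms of $a$ alone. Because $\Delta$ is only immersed, one must select the correct local sheet through $\vec 0$ and argue that its embedded graph radius depends only on the bound $a$ on $|A_\Delta|$. Beyond this, the rest is a routine translation of the hypotheses of Theorem~\ref{th} via shifts of the separating plane, so that $\Delta$ need not coincide with the plane $\{x_3=0\}$ but only sit in a sufficiently thin slab about it.
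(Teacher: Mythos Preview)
Your approach---rescale to $R=1$, rotate so $T_{\vec 0}\Delta$ is horizontal, use the bound $|A_\Delta|\le a$ to write $\Delta$ locally as a flat graph, then apply Theorem~\ref{th} with a slightly translated plane---is exactly the ``simple rescaling argument'' the paper has in mind, so the strategy is correct.

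There is, however, a gap in the verification of the first hypothesis of Theorem~\ref{th}. You need $\cD\cap\B(p^-,R_0)\cap P^-=\O$ for the \emph{whole} $H$-disk $\cD$, not just for the above-graph components. Since $\B(p^-,R_0)\subset\B(r_0)$ and $\Delta_0$ separates $\B(r_0)$, each component of $\cD\cap\B(p^-,R_0)$ lies on one side of $\Delta_0$; the above components are contained in $\{x_3>-\alpha r_0\}$ and hence miss $P^-$, but any below component sits in $\{x_3<\alpha r_0\}$ and may well cross $P^-=\{x_3=-2\alpha r_0\}$. Nothing in your hypotheses forbids $\cD\cap\B(r_0)$ from having components on both sides of $\Delta_0$ (the disk $\cD$ is only required to avoid $\Delta$, and it can exit $\B(r_0)$, wander outside, and re-enter on the opposite side), so the hypothesis of Theorem~\ref{th} can genuinely fail in your setup.

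A clean fix is to localize first: given $p\in\cD\cap\B(\ve r_0)$, use the weak chord arc property (Theorem~\ref{thm1.1}) to produce the sub-$H$-disk $\cD(p,\delta_1 r)$ with $\partial\cD(p,\delta_1 r)\subset\partial\B(p,\delta_1 r)$ for a suitable $r=r(a)$ with $\B(p,\delta_1 r)\subset\B(r_0)$. This sub-disk is \emph{connected}, contains $p$, and is disjoint from $\Delta_0$, so it lies entirely on one side of $\Delta_0$; now your translated-plane application of Theorem~\ref{th} goes through for $\cD(p,\delta_1 r)$ without the obstruction, yielding the curvature bound at $p$ with constants depending only on $a$. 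Alternatively one can run a contradiction/blow-up argument in which the rescaled barriers $\Delta_n$ converge to a plane and Theorem~\ref{th} applies directly in the limit.
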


The next curvature estimate is a more involved application of Theorem~\ref{th} and
also uses Theorem~\ref{thm2.1} below in its proof.

\begin{corollary}[Corollary 4.6  in~\cite{mt13}] \label{cest-cor}
There exist constants $\ve<1$, $C>1$
such that the following holds.  Let
$\Sigma_1$, $\Sigma_2$, $\Sigma_3$ be three pairwise disjoint $H_i$-disks with
$\partial \Sigma_i\subset [ \rth- \B(1)]$ \,for $i=1,2,3$.
If $\,\B(\ve)\cap\Sigma_i\not=\O$ for $i=1,2,3$, then {
\[
\sup_{\B(\ve)\cap\Sigma_i,\,i=1,2,3}|A_{\Sigma_i}|\leq C.
\]}\end{corollary}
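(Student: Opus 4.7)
The plan is to argue by contradiction, combining the local helicoidal structure theorem (Theorem~\ref{mainextension}) with a double application of the one-sided curvature estimate with a background surface (Corollary~\ref{cest2}).

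Suppose no such constants $\ve<1$, $C>1$ exist. Then for every $n\in\N$ there are $\ve_n\in(0,1/n)$ and three pairwise disjoint $H_i(n)$-disks $\Sigma_1(n),\Sigma_2(n),\Sigma_3(n)$ with $\partial\Sigma_i(n)\subset\rth-\B(1)$, with $\B(\ve_n)\cap\Sigma_i(n)\neq\emptyset$ for $i=1,2,3$, and with $\max_{i\in\{1,2,3\}}\sup_{\B(\ve_n)\cap\Sigma_i(n)}|A_{\Sigma_i(n)}|\geq n$. Passing to a subsequence and relabeling, I may assume $|A_{\Sigma_1(n)}|(p_n)\to\infty$ at some $p_n\in\B(\ve_n)\cap\Sigma_1(n)$, so $p_n\to\vec 0$. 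The intrinsic estimate Theorem~\ref{cest}, applied on $\Sigma_1(n)$ at intrinsic distance $\geq 1-\ve_n$ from $\partial\Sigma_1(n)$, rules out the case $H_1(n)\geq\cH>0$, so I may further assume $H_1(n)\to 0$; in particular $H_1(n)\in(0,1/(2\ve))$ for any prescribed $\ve>0$ and all $n$ large.

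Next I apply Theorem~\ref{mainextension} to $\Sigma_1(n)$ at $p_n$ with a small fixed gradient tolerance $\tau$. Up to a rotation chosen along a convergent subsequence, $\Sigma_1(n)$ then contains a $3$-valued graph $u_n$ of gradient $<\tau$ over an annulus whose inner radius $\omega_\tau/|A_{\Sigma_1(n)}|(p_n)$ tends to $0$ and whose outer radius $\ve/\Omega_\tau$ is bounded away from $0$. The middle sheet $\Delta(n)$ of this graph is an embedded near-horizontal disk inside $\Sigma_1(n)$ with uniformly small $|A_{\Delta(n)}|$ on a fixed scale, accumulating at $\vec 0$. Since $\Sigma_2(n)$ and $\Sigma_3(n)$ are disjoint from $\Sigma_1(n)\supset\Delta(n)$ and have boundaries outside $\B(1)$, Corollary~\ref{cest2} with $\Delta:=\Delta(n)$ and $\cD:=\Sigma_j(n)$, $j\in\{2,3\}$, produces a uniform bound on $|A_{\Sigma_j(n)}|$ on some smaller fixed ball $\B(\ve')$.

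With $\Sigma_2(n)$ now of uniformly bounded curvature on $\B(\ve')$, one further application of Corollary~\ref{cest2}, this time with $\Delta:=\Sigma_2(n)\cap\B(\ve')$ and $\cD:=\Sigma_1(n)$, yields a uniform bound on $|A_{\Sigma_1(n)}|$ on a further ball $\B(\ve'')$. Because $p_n\to\vec 0$, for $n$ large $p_n\in\B(\ve'')$, contradicting $|A_{\Sigma_1(n)}|(p_n)\to\infty$ and finishing the argument. The main technical obstacle is the scale bookkeeping: one must ensure that the sheet $\Delta(n)$ from Theorem~\ref{mainextension} sits inside a fixed ball where both $\Sigma_2(n)$ and $\Sigma_3(n)$ are still in position to satisfy the hypotheses of Corollary~\ref{cest2}, and that the selected connected components of $\Sigma_j(n)\cap\B(\ve')$ (and of $\Sigma_1(n)\cap\B(\ve'')$ in the second application) remain globally disjoint from the chosen background surface, not merely near $p_n$. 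The auxiliary Theorem~\ref{thm2.1} cited in the excerpt is presumably deployed for precisely these component-separation/chord-arc steps.
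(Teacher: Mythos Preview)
Your overall contradiction strategy is sound and matches what the paper indicates: just before the statement it remarks that Corollary~\ref{cest-cor} is ``a more involved application of Theorem~\ref{th}'' that ``also uses Theorem~\ref{thm2.1}'', deferring the actual argument to~\cite{mt13}. There is no proof in this paper to compare against line by line; the comparison is to that one-sentence hint.

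Where your sketch deviates from the hinted route, and where there is a genuine gap, is in the choice of background surface for the first application of Corollary~\ref{cest2}. You take $\Delta(n)$ to be the ``middle sheet'' of the $3$-valued graph supplied by Theorem~\ref{mainextension}. But that sheet is a graph over the \emph{annulus} $A(\ve/\Omega_\tau,\,\omega_\tau/|A_{\Sigma_1(n)}|(p_n))$, so it carries an inner boundary curve inside $\B(10\,\omega_\tau/|A_{\Sigma_1(n)}|(p_n))$ and does not pass through $\vec 0$; Corollary~\ref{cest2}, however, requires $\partial\Delta\subset\partial\B(R)$ and $\vec 0\in\Delta$. As written neither hypothesis holds, and you cannot simply plug $\Delta(n)$ in. Two smaller points: you should invoke Theorem~\ref{thm1.1} first to extract from $\Sigma_1(n)$ an $H_1(n)$-disk in $\B(p_n,\de_1/2)$ with boundary on $\partial\B(p_n,\de_1/2)$, since Theorem~\ref{mainextension} assumes $\partial M\subset\partial\B(\ve)$; and the case $H_1(n)=0$ falls outside the stated range of Theorem~\ref{mainextension} and needs its Colding--Minicozzi antecedent.

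This is precisely why the paper points to Theorem~\ref{thm2.1}. Applied to the disk components just extracted, one lands (since $|A_{\Sigma_1(n)}|(p_n)\to\infty$ rules out Case~A) in Case~B: the $\Sigma_1(n)$ converge $C^\a$ away from a line to a foliation by flat planes. The limiting flat leaf through $\vec 0$ (equivalently, the collapsed sheets of the multi-graph) now furnishes an honest disk $\Delta$ with $\partial\Delta\subset\partial\B(R)$ and $\vec 0\in\Delta$, legitimately usable in Corollary~\ref{cest2} to bound $|A_{\Sigma_2(n)}|$ and $|A_{\Sigma_3(n)}|$ on a fixed ball. Your second step, feeding a bounded-curvature component of $\Sigma_2(n)$ back as the background to control $\Sigma_1(n)$, is then exactly the correct closing move. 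In short: your architecture is right, but replace the raw annular sheet from Theorem~\ref{mainextension} by the limiting flat disk delivered by Theorem~\ref{thm2.1}; that substitution is the role the paper assigns to that theorem.
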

\vspace{.1cm}

In~\cite{mt8}, we applied the one-sided curvature estimates in Theorem~\ref{th}
to prove a relation between intrinsic
and extrinsic distances in an $H$-disk, which can be viewed as a
{\em weak chord arc} property.
This result was motivated by and generalizes a previous result by
Colding-Minicozzi for 0-disks, namely
Proposition~1.1 in~\cite{cm35}.
We begin by making the following definition.

\begin{definition} Given a point $p$ on a surface
$\Sigma\subset \rth$, $\S (p,R)$ denotes the closure of
 the component of $\Sigma \cap {\B}(p,R)$ passing through $p$.
\end{definition}

\begin{theorem}[Weak chord arc property, Theorem 1.2 in~\cite{mt8}] \label{thm1.1}
There exists a $\delta_1 \in (0,
\frac{1}{2})$  such that the following holds.

Let $\S$ be an   $H$-disk in $\rth.$  Then for all
intrinsic closed balls $\ov{B}_\S(x,R)$ in $\S-
\partial \S$:

\ben \item $\S (x,\delta_1 R)$ is a disk with piecewise smooth boundary
$\partial \Sigma(x,\delta_1 R)\subset \partial \B(\de_1R)$. \item
$
 \S (x, \delta_1 R) \subset B_\S (x, \frac{R}{2}).$
\een
\end{theorem}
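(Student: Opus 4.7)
The plan is to argue by contradiction using a blow-up and rescaling argument, with the one-sided curvature estimate Theorem~\ref{th} and its Corollary~\ref{cest2} serving as the key geometric tool; this parallels the strategy Colding and Minicozzi used for $0$-disks in Proposition~1.1 of \cite{cm35}, with the extrinsic one-sided estimate for $H$-disks replacing its minimal predecessor.

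Suppose the statement fails. Then there exist $\delta_n \downarrow 0$, $H$-disks $\Sigma_n$ in $\rth$ with mean curvatures $H_n \geq 0$, and intrinsic closed balls $\overline{B}_{\Sigma_n}(x_n,R_n) \subset \Sigma_n - \partial \Sigma_n$ for which at least one of conclusions (1), (2) fails with $\delta_n$ in place of $\delta_1$. After translating $x_n$ to $\vec{0}$ and rescaling $\Sigma_n$ by $1/R_n$ (replacing $H_n$ by $H_nR_n$), we may assume $x_n = \vec{0}$ and $R_n = 1$. First I would show that in either failure mode one can extract a point $y_n \in \Sigma_n \cap \B(\delta_n)$ lying in the same connected component $\Sigma_n(\vec{0},\delta_n)$ of $\Sigma_n \cap \B(\delta_n)$ as $\vec{0}$ but satisfying $d_{\Sigma_n}(\vec{0},y_n) \geq 1/2$. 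In the ``not contained'' case this is immediate; in the ``not a disk'' case one uses that the ambient $\Sigma_n$ is simply connected, so the failure of $\Sigma_n(\vec{0},\delta_n)$ to be a disk forces at least two boundary circles in $\partial \B(\delta_n)$, and an arc in $\Sigma_n(\vec{0},\delta_n)$ joining two such circles, together with the disk topology of $\Sigma_n$, produces the required $y_n$.

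Next, using the multi-valued graph description in Theorem~\ref{mainextension} at the natural scale $1/|A_{\Sigma_n}|(\vec{0})$ (or, if $|A_{\Sigma_n}|(\vec{0})$ stays bounded, working directly with a tangent-plane graph), I would exhibit a scale $\rho_n$ with $\delta_n \ll \rho_n \to 0$ and a connected piece $\Delta_n \subset \Sigma_n \cap \B(\rho_n)$ through $\vec{0}$ that is a graph of arbitrarily small slope over its tangent plane, in particular with $|A_{\Delta_n}| \leq a/\rho_n$ for a uniform constant $a$. Possibly after replacing $y_n$ by an intermediate point along the long intrinsic path in $\Sigma_n(\vec{0},\delta_n)$ from $\vec{0}$ to $y_n$, one obtains a sheet of $\Sigma_n$ that is extrinsically close to but disjoint from $\Delta_n$, since any intrinsic path in $\Sigma_n$ connecting the two sheets has length at least $1/2 \gg \rho_n$. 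Corollary~\ref{cest2}, applied with $\Delta_n$ in the role of $\Delta$, then yields a uniform bound on $|A_{\Sigma_n}|$ on this second sheet at a definite fraction of $\rho_n$.

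With both $\Delta_n$ and the second sheet enjoying uniformly bounded rescaled second fundamental form on $\B(\ve\rho_n)$, a standard chord-arc comparison for surfaces of bounded curvature forces the intrinsic distance from $\vec{0}$ to that sheet (and hence, after concatenating with the path in $\Sigma_n(\vec{0},\delta_n)$, to $y_n$) to be comparable to the extrinsic distance, which is $O(\delta_n/\rho_n) \to 0$; this contradicts $d_{\Sigma_n}(\vec{0},y_n) \geq 1/2$. I expect the main obstacle to be the uniform execution of the second step: neither $H_nR_n$ nor $|A_{\Sigma_n}|(\vec{0})$ is a priori controlled, so one must carefully patch the information coming from Theorem~\ref{mainextension} (high-curvature regime), the intrinsic estimate Theorem~\ref{cest} (when $H_nR_n$ is bounded below), and $C^\alpha$-subsequential limits (minimal regime) in order to guarantee the flat sheet $\Delta_n$ at a single common scale at which Corollary~\ref{cest2} is applicable.
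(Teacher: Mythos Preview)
The paper does not contain a proof of this statement. Theorem~\ref{thm1.1} appears in Section~\ref{sec:pre} (Preliminaries) as a result quoted from \cite{mt8}, under the heading ``Weak chord arc property, Theorem~1.2 in~\cite{mt8}'', and is used throughout the paper as a black box; no argument for it is given here. So there is no ``paper's own proof'' against which to compare your proposal.

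That said, your outline is in the right spirit: the actual proof in \cite{mt8} is indeed a blow-up/contradiction argument that ultimately rests on the extrinsic one-sided curvature estimate for $H$-disks (the analogue of Theorem~\ref{th}/Corollary~\ref{cest2} here), generalizing the Colding--Minicozzi argument for $0$-disks. Two cautions about your sketch as written. First, in your reduction you cannot directly invoke Theorem~\ref{mainextension} or Theorem~\ref{cest} to produce the flat sheet $\Delta_n$, since in \cite{mt8} the weak chord-arc property is logically \emph{prior} to those results (and in the present paper Theorem~\ref{mainextension} is stated only for $H>0$); the flat reference surface has to be produced by more primitive means, essentially the formation of multi-valued graphs and the one-sided estimate alone. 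Second, your handling of the ``not a disk'' case is too quick: the existence of two boundary circles does not by itself yield a point $y_n$ with $d_{\Sigma_n}(\vec 0,y_n)\geq 1/2$; one needs an additional argument (in \cite{mt8} this is done via an induction on scales) to promote the topological failure to a chord-arc failure at a controlled scale.
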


For applications here, we will also need the closely related chord-arc result below, that is Theorem 1.2 in~\cite{mt13}.

\begin{theorem}[Chord arc property  for $H$-disks] \label{main2}
There exists a constant $a>1$ so that the following holds.
Suppose that $\S$ is an $H$-disk with $ \vec{0}\in\S$, $R>r_0>0$ and
${B}_\S(\vec 0,aR) \subset \S-\partial \S$. If
$\sup_{\large B_\S(\vec 0,(1-\frac{\sqrt{2}}{2})r_0)}|A_\S|>r_0^{-1}$,
then
$$ \frac{1}{3}\mbox{\rm dist}_\S (x,\vec{0})\leq |x|/2+r_0, \;
\mbox{\rm for } x\in B_\S(\vec 0,R).$$
\end{theorem}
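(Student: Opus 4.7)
The plan is to argue by contradiction and rescaling, following the strategy of Colding--Minicozzi's minimal chord-arc theorem and using the structural results for $H$-disks stated above as substitutes for their minimal counterparts. Suppose no universal constant $a$ works; then there exist sequences of $H_n$-disks $\Sigma_n$, scales $R_n > r_{0,n}$ with $B_{\Sigma_n}(\vec{0}, n R_n) \subset \Sigma_n - \partial \Sigma_n$, interior points at which $|A_{\Sigma_n}|$ exceeds $r_{0,n}^{-1}$ on $B_{\Sigma_n}(\vec{0},(1-\tfrac{\sqrt{2}}{2})r_{0,n})$, and ``bad'' points $x_n \in B_{\Sigma_n}(\vec{0},R_n)$ with $d_{\Sigma_n}(x_n,\vec{0}) > \tfrac{3}{2}|x_n| + 3 r_{0,n}$. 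Rescaling $\rth$ by $r_{0,n}^{-1}$, I may assume $r_{0,n}=1$ throughout.

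Next I would exploit the curvature hypothesis to extract graphical structure. Applying a point-picking argument to the function $q \mapsto |A_{\Sigma_n}|(q)\bigl((1-\tfrac{\sqrt{2}}{2}) - d_{\Sigma_n}(\vec{0},q)\bigr)$ on the intrinsic ball $B_{\Sigma_n}(\vec{0},1-\tfrac{\sqrt{2}}{2})$, I obtain a blow-up base point $q_n$ satisfying the hypotheses of Theorem~\ref{mainextension}; after the associated translation and rotation, this furnishes in $\Sigma_n$ a 3-valued graph of small gradient over an annulus whose inner radius collapses and whose outer radius is of order $1$. Taking the parameter $j=j_n\to\infty$ in the extension clause of the same theorem, $\Sigma_n$ in fact contains $j_n$ pairwise disjoint stacked 3-valued graphs near $q_n$ extending out to a definite extrinsic scale.

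The third step is a covering and iteration argument along a minimizing unit-speed geodesic from $\vec{0}$ to $x_n$ in $\Sigma_n$. The weak chord-arc theorem (Theorem~\ref{thm1.1}), applied to overlapping intrinsic balls of fixed radius centered on the geodesic, confines each such ball to a controlled extrinsic ball. On the other side, the one-sided curvature estimate (Theorem~\ref{th} together with Corollary~\ref{cest2}, with the stacked 3-valued graphs from the previous step playing the role of the reference surface) bounds the second fundamental form of any component of $\Sigma_n$ that enters a neighborhood between two adjacent sheets. Combined, these ingredients prevent the geodesic from spiralling tightly far from the chord $\overline{\vec{0}\,x_n}$, and a quantitative iteration collapses to the linear bound $d_{\Sigma_n}(x_n, \vec{0}) \leq \tfrac{3}{2}|x_n|+3$, contradicting the choice of $x_n$.

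The hard part is uniformity in $H_n$ together with the exact value of the constants. When $H_n$ is bounded below, the intrinsic curvature estimate of Theorem~\ref{cest} immediately supplies the pointwise bound on $|A_{\Sigma_n}|$ needed to run the blow-up; but when $H_n \to 0$ the argument must degenerate to the minimal Colding--Minicozzi chord-arc, and one has to synchronize the graphical scale $\omega_\tau/|A_{\Sigma_n}|(q_n)$ produced by Theorem~\ref{mainextension} with the iteration scale of the weak chord-arc uniformly in $n$. Arranging these scale choices so that the one-sided estimates apply uniformly across the full range of mean curvatures, and so that the final chord-arc constant can be taken to be $3/2$ with additive slack $3 r_0$, is the main technical hurdle.
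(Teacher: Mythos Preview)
The paper does not contain a proof of Theorem~\ref{main2}: it is quoted in Section~\ref{sec:pre} as a preliminary result, with the attribution ``that is Theorem~1.2 in~\cite{mt13}''. There is therefore nothing in this manuscript to compare your attempt against; the proof lives entirely in the companion paper~\cite{mt13}.

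That said, your outline follows the expected Colding--Minicozzi blueprint (contradiction, rescaling, point-picking to produce multi-valued graphs via Theorem~\ref{mainextension}, then propagation along a geodesic using one-sided curvature estimates), and this is indeed the strategy used in~\cite{mt13}. Two cautions are worth recording. First, your description of how Theorem~\ref{thm1.1} is used is slightly off: the weak chord-arc says the \emph{extrinsic} component $\Sigma(x,\delta_1 R)$ sits inside the \emph{intrinsic} ball $B_\Sigma(x,R/2)$, not the other way around; it does not by itself ``confine intrinsic balls to extrinsic balls''. What it does provide is that each extrinsic component is a disk with boundary on the sphere, which is what one feeds into the one-sided estimate. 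Second, the whole content of the theorem is in your third step, which you summarize as ``a quantitative iteration collapses to the linear bound''. That iteration---how the multi-valued graph extends outward, how the one-sided estimate forces nearby sheets to be graphical, and how one chains these estimates along the geodesic to obtain the specific constants $\tfrac{1}{3}$ and $r_0$---is precisely the substance of the proof in~\cite{mt13}, and your sketch does not indicate how it goes. Without that, the proposal is a plausible table of contents rather than a proof.
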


Since in the proofs of
Theorems~\ref{H-lam-thm}, \ref{geometry1} and \ref{geometry2} we will frequently
refer to parts of the statement of the Limit Lamination Theorem for $H$-disks, namely
Theorem~1.1 in~\cite{mt13},
we state it below for more direct referencing.

\begin{theorem}[Limit lamination theorem for $H$-disks] \label{thm2.1}
Fix  $\ve >0$ and let $\{M_n\}_n$ be
a sequence of $H_n$-disks in $\rth$ containing the origin  and such that
$\partial M_n \subset [\rth - \B(n)]$ and
$|A_{M_n} |(\vec{0})\geq \ve$. Then, after replacing  by
some subsequence,  exactly one of the
following two statements hold.
\ben[A.]
\item The surfaces $M_n$
converge smoothly with multiplicity one or two on compact
subsets of $\rth$ to a helicoid $M_{\infty}$
containing the origin. Furthermore,
every component $\Delta$ of $M_n\cap \B(1)$  is an open disk
whose closure $\ov{\Delta}$ in $M_n$
is a compact disk with piecewise smooth boundary, and
where the intrinsic distance in $M_n$
between  any two points of its closure $\ov{\Delta}$ less than 10.
\item  There are points $p_n\in M_n$ such that
\[
\lim_{n\to \infty}p_n=\vec{0} \text{\, and \,}
\lim_{n\to \infty}|A_{M_n}|(p_n)=\infty,
\] and the
following  hold: \ben
\item The surfaces $M_n$ converge to a foliation
of $\rth$ by planes and the convergence is $C^\alpha$, for any $\a\in(0,1)$,
away from the line containing
the origin and orthogonal to the planes in the foliation. \item
There exists  compact subdomains
$\cC_n$ of $M_n$, $[M_n\cap \oB(1)]\subset \cC_n \subset \B(2)$
and  $\partial \cC_n\subset \B(2)-\oB(1)$, each $\cC_n$
consisting of one or two pairwise disjoint disks, where
each disk component has intrinsic diameter less than 3 and intersects $\B(1/n)$.
Moreover, each connected
component of $M_n\cap \B(1)$ is an open disk whose closure in $M_n$
is a compact disk with piecewise smooth boundary. \een
\een
\end{theorem}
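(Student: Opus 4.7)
The plan is to set up the standard curvature dichotomy and then apply the structural tools already collected in the preliminaries: Theorem~\ref{mainextension} for multi-valued graphs, Theorem~\ref{th} for the one-sided estimate, Theorems~\ref{thm1.1} and~\ref{main2} for chord-arc, and Corollary~\ref{cest-cor} for the three-disk estimate. After passing to a subsequence, assume $H_n\to H_\infty\ge 0$. The dichotomy is whether $|A_{M_n}|$ is uniformly locally bounded on $\rth$ (Case A) or blows up at some sequence of points (Case B). The hypothesis $|A_{M_n}|(\vec 0)\ge\ve$ prevents the Case A limit from being a plane, and a standard point-picking argument applied to $|A_{M_n}|(\cdot)(\delta-|\cdot|)$ produces, in Case B, points $p_n\to\vec 0$ with $|A_{M_n}|(p_n)\to\infty$ that double as local maxima of a weighted version of $|A_{M_n}|$. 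A key preliminary observation is that if $H_\infty>0$, the intrinsic estimate Theorem~\ref{cest} gives a uniform curvature bound on any fixed intrinsic ball of $M_n$ for $n$ large, so Case B forces $H_\infty=0$.

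For Case A, local curvature bounds and standard $H$-surface regularity yield, on a further subsequence, smooth convergence on compact subsets to a properly embedded $H_\infty$-surface $M_\infty$ containing $\vec 0$ with $|A_{M_\infty}|(\vec 0)\ge\ve$. If $H_\infty>0$, embedded $H$-surface theory (see~\cite{mt3}) forces the component of $M_\infty$ through $\vec 0$ to be a round sphere; smooth convergence then forces each $M_n$ to contain a near-copy of that sphere for large $n$, contradicting the simple connectedness of $M_n$. Hence $H_\infty=0$ and the component of $M_\infty$ through $\vec 0$ is a complete properly embedded simply-connected minimal surface with nontrivial curvature, which is a helicoid by the Meeks--Rosenberg uniqueness theorem. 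Multiplicity one or two follows from embeddedness of the $M_n$: a multiplicity-$k$ sheeted limit produces $k-1$ linearly independent positive Jacobi functions on the helicoid, which forces $k\le 2$. The intrinsic-diameter bound of $10$ for components of $M_n\cap\B(1)$ then follows from the explicit helicoidal geometry (where the bound holds with slack) combined with the weak chord-arc Theorem~\ref{thm1.1}.

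For Case B, write $\lambda_n:=|A_{M_n}|(p_n)\to\infty$ and apply Theorem~\ref{mainextension} with $\tau$ arbitrarily small and $N$ arbitrarily large: after a common rotation taking the helicoidal axes vertical, each $M_n$ contains $N$-valued graphs of gradient less than $\tau$ over the annulus $A(\ve/\Omega_\tau,\omega_\tau/\lambda_n)$. Since $\omega_\tau/\lambda_n\to 0$ and $N$, $1/\tau$ are arbitrary, these graphs sweep out arbitrarily large punctured disks with arbitrarily many sheets of arbitrarily small slope, forcing any subsequential lamination limit on $\rth\setminus\cS$, where $\cS$ is the vertical line through the limit of the $p_n$, to be the full horizontal foliation; the extrinsic one-sided estimate Theorem~\ref{th} upgrades this to $C^\alpha$ convergence on compact subsets away from $\cS$. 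For item~2, let $\cC_n$ be the union of components of $M_n\cap\B(2)$ meeting $\oB(1)$: chord-arc Theorem~\ref{main2} bounds their intrinsic diameters by $3$, and the three-disk Corollary~\ref{cest-cor} forbids a third such component from reaching a fixed small ball $\B(\ve_0)$, reducing the count to one or two. A disk component of $\cC_n$ not intersecting $\B(1/n)$ would, being essentially a flat graph over an annular region near $\vec 0$, force uniformly bounded curvature there, contradicting the blow-up at $p_n$.

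The hardest step will be the sharpness of the Case~B description---rigidifying the limit to be the \emph{full} horizontal foliation (rather than some proper sublamination) and pinning the component count of $\cC_n$ at one or two. Both rely on iterating Theorem~\ref{mainextension} across scales from $1/\lambda_n$ up to order one to propagate the helicoidal picture macroscopically, and then invoking Corollary~\ref{cest-cor} to exclude a third helicoidal column near the origin. The multiplicity-two ceiling in Case A and the no-sheet-loss in the $C^\alpha$ convergence of Case B are two faces of the same embeddedness-plus-one-sided-estimate principle.
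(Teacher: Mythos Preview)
This theorem is not proved in the present paper at all: it is quoted verbatim from~\cite{mt13} as a preliminary tool (see the sentence preceding its statement in Section~\ref{sec:pre}), so there is no ``paper's own proof'' to compare against. Your sketch is therefore an attempt to reconstruct the proof of Theorem~1.1 of~\cite{mt13}, not of anything in this paper.

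That said, your outline has a genuine gap in Case~A. You claim that a multiplicity-$k$ limit produces $k-1$ linearly independent positive Jacobi functions and that this forces $k\le 2$; this is not how the multiplicity argument works. Multiplicity $\ge 2$ convergence of embedded surfaces to a two-sided limit produces \emph{one} positive Jacobi function (from normalized sheet separation), which would make the limit stable --- but the helicoid is unstable, so your argument, taken at face value, would exclude multiplicity two, contradicting the very statement you are trying to prove. The actual mechanism allowing multiplicity two in~\cite{mt13} is different and tied to the orientation/mean-curvature-vector structure of the approximating $H_n$-disks; you have not identified it. You also assert without argument that the limit $M_\infty$ is simply-connected --- this requires lifting loops back to the disks $M_n$, which is not automatic under multiplicity-two convergence. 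Finally, in Case~A your exclusion of $H_\infty>0$ via ``the component is a round sphere'' invokes a classification that needs the limit to be compact or to have some topological control you have not established; the cleaner route (used in the actual proof) is that positive $H_\infty$ combined with the disk hypothesis already gives curvature bounds via Theorem~\ref{cest}, forcing the limit curvature to be bounded, which is compatible with Case~A but makes the helicoid conclusion (a minimal surface) require $H_\infty=0$ separately.
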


\begin{remark}[Double spiral  staircase structure] \label{remark:spiral} {\em Suppose that Case B
occurs in the statement of Theorem~\ref{thm2.1}
and let  $\Delta_n$ be a component of $\cC_n$.
By Remark~3.6 in~\cite{mt13}, after replacing  the surfaces $M_n$ by a subsequence and
composing them by a  rotation of $\rth$ that fixes the origin and so that the planes of
the limit foliation are horizontal, then, as $n$ tends to infinity, $\Delta_n$ has the structure of
a {\em double spiral  staircase}, in the following sense:
\ben \item $\Delta_n $ contains a smooth connected arc $\G_n(t)$,
called its {\em central column}, that is
parameterized by the set of its third coordinates which equals the
interval  $I_n=(-1-\frac1n,1+\frac1n)$. $\G_n(t)$ is
 the set of points of $\Delta_n $ with vertical tangent planes
 and $\G_n(t)$ is $\frac1n$-close to the arc
$\{(0,0,t) \mid t\in I_n\}$ with respect to the $C^1$-norm.

For each $t\in I_n$,  let $T_n(t)$ be the vertical tangent plane
of $\Delta_n$ at $\G_n(t)$.
\item  For $t\in I_n$, $T_n(t)\cap \Delta_n$ contains a smooth
arc $\a_{n,t}$ passing through $\G_n(t)$ that is $\frac1n$-close
in the $C^1$-norm to an arc $\beta_{n,t}$ of the line $T_n(t)\cap \{x_3=t\}$
such that $\G_n(t)\in \beta_{n,t}$ and the end points of  $\beta_{n,t}$ lie in $\B(2)-\ov{\B}(1)$; here
$\{\a_{n,t}\}_{t\in   I_n}$ is a pairwise disjoint collection of arcs and $\Delta_n=\bigcup_{t\in I_n}\a_{n,t}$.

\item The absolute Gaussian curvature of $\Delta_n$ along $\G_n(t)$ is pointwise
greater than $n$. Since the central column $\G_n(t)$ of $\Delta_n$ is converging $C^1$ to the segment given by
$\B(1)\cap \{x_3\text{-axis}\}$, the arcs $\a_{n,t}$ are converging to $T_n(t)\cap \Delta_n$ and on the scale of curvature
$\Delta_n$ is closely approximated by a vertical helicoid near  every point of $\G_n(t)$ (see Corollary~3.8 in~\cite{mt9}),
then the rate of change of  the horizontal unit normal of $T_n(t)$ along $\G_n(t)$ is greater than $\sqrt{n}$.  \een
}
\end{remark}

Next, we recall the notion of
flux of a 1-cycle of an $H$-surface; see for instance~\cite{kks1,ku2,smyt1}
for further discussions of this invariant.

\begin{definition} \label{def:flux} {\rm
Let $\gamma$ be a 1-cycle in an $H$-surface $M$. The 
{\em flux} of
$\gamma$ is $F(\g)=\int_{\gamma}(H\gamma+\xi)\times \dot{\gamma}$, where $\xi$
is the unit normal to $M$ along $\gamma$. The norm
$|F(\g)|$ is called the {\em scalar flux} of $\g$.}
\end{definition}

The flux of a 1-cycle in   an $H$-surface $M$ is a homological invariant and
we say that  $M$ has {\em  zero flux} if the flux of any 1-cycle in $M$ is zero;
in particular, since the first homology group of a disk is zero,   an $H$-disk has  zero flux.
Finally, the next definition was needed in Definition~\ref{def:lbsf} in the Introduction.

\begin{definition} \label{definj} {\rm
The injectivity radius $I_M(p)$ at a point
$p$ of a complete Riemannian manifold $M$ is
the supremum of the radii $r>0$ of the open metric balls $B_M(p,r)$ for which
the exponential map at $p$ is a diffeomorphism. This defines
the {\it injectivity radius function,}
$I_M\colon M\to (0,\infty ]$, which is continuous
on $M$ (see e.g., Proposition~88 in~\cite{ber1}). When $M$ is
complete, we let $\inj(M)$ denote the
{\it injectivity radius of $M$}, which is defined to be the
infimum of $I_M$.}
\end{definition}

\section{The proof of Theorem~\ref{H-lam-thm}.} \label{sec:finiteg}

In this section we will prove all of the statements in
Theorem~\ref{H-lam-thm} except for one of the implications
in item~2. However, at the end of this section we explain
how the  missing proof of this implication
follows from
 item~2 of Theorem~\ref{geometry2}.  Hence, once
 Theorem~\ref{geometry2} is proven in Section~\ref{sec5}, the proof of
 Theorem~\ref{H-lam-thm} will be complete.

 Let
 $\{M_n\}_{n\in\mathbb N}$
be a sequence of compact $H_n$-surfaces in $\rth$
with finite genus at most $k$, $\vec{0}\in M_n$, $M_n$
contains no spherical components,
$\partial M_n\subset [\rth -\B(n)]$ and  the sequence has locally
bounded norm of the second fundamental form in $\rth$.
By a standard argument,
a subsequence of the surfaces converges to a weak $H$-lamination
$\cL$ of   $\rth$; see  the
references
\cite{mpr10,mpr18,mt4} for this argument and the Appendix for the
definition and some key properties of a weak
$H$-lamination that we will apply below.

Let $L$ be a leaf of $\cL$. If $L$ is stable ($L$ admits a positive
Jacobi function), then $L$ is a complete,
stable constant mean curvature surface in $\rth$,
which must be a flat plane by~\cite{lor2,ros9}.  If $L$ is a
flat plane, then the injectivity radius is infinite.
Since by Theorem~4.3 in~\cite{mpr19} limit leaves
(see Definition~\ref{deflimit} for the
definition of limit leaf) of $\cL$ are stable,
we conclude that if $L$ is a limit leaf of $\cL$, then it is a plane and has
infinite injectivity radius.  Thus we also conclude that if
$\cL$ has a limit leaf, then $H=0$. From this point till the
beginning of the proof of item~1 of the theorem,
we will assume that $\cL$ is not a lamination of $\rth$ by parallel planes.

Suppose now that  $L$ is a non-flat leaf of
$\cL$.  By the discussion in the previous paragraph
and item~3 of Remark~\ref{remarkweak},
$L$ is a non-limit leaf
and it has on its mean convex side an embedded half-open regular neighborhood
$N(L)$ in $\rth$ that intersects $\cL$ only in the leaf $L$; also since
$L$ is not a limit leaf of $\cL$, then $N(L)$ lies in the interior of an
open set $\wh{N}(L)$ that also intersects $\cL$ only in the leaf $L$.
Since the leaf $L$
is not stable, the existence of  $N(L)$ allows us to apply
the arguments in the proof of Case A in the proof of Proposition~3.1 of~\cite{mt9},
 to show that
the sequence $\{M_n\cap \wh{N}(L)\}_{n\in \N}$ converges to $L$ with multiplicity one or  two
and the genus of $L$ is at most $k$.

We now
prove that $\cL$ does not contain a limit leaf.
Arguing by contradiction, suppose $L$ is a limit leaf of $\cL$,
then, as previously proved, $L$ must be a flat plane. Thus, since
we are assuming that $\cL$ is not a lamination of $\rth$ by parallel planes,
 $\cL$ is a minimal lamination containing a flat leaf $L$ and a non-flat
leaf $L'$ with finite genus at most $k$.
By Theorem~7 in~\cite{mr13}, a finite genus
leaf of a minimal lamination of $\rth$  is proper,
which contradicts the Half-space Theorem~\cite{hm10} since $L'$ is contained
in the half-space determined by $L$. This proves  that
$\cL$ contains no limit leaves.

Since  $\cL$ is a weak $H$-lamination of $\rth$ that does not have a limit leaf,
then the union of the leaves of $\cL$ is a properly
immersed, possibly disconnected
 $H$-surface, such that around any point $p$ where the leaves of
 the weak lamination do not form a lamination, there
exists an $\ve>0$ such that $\cL\cap \B(p,\ve)$ consists
of exactly two disks in leaves of $\cL$ with boundaries in
$ \B(p,\ve)$ and these two disks
lie on one side of each other, intersect at $p$ and their non-zero
mean curvature vectors are oppositely oriented.  See the Appendix for further discussion
of properties of weak $H$-laminations.

 If $H=0$, then the leaves of $\cL$ are  embedded by the maximum
 principle and $\cL$ is connected because of the Strong Halfspace Theorem~\cite{hm10}. Thus
by elementary separation properties, $\cL$  bounds a proper region
$W$ of $\rth$. Hence, $\cL$  is a connected, strongly
Alexandrov embedded minimal surface in the case where $H=0$.

Suppose next that $H>0$ and note that by the previous description
or by item~3 in Remark~\ref{remarkweak},
each leaf $L$ of $\cL$ can be perturbed  slightly on its mean convex
side to be properly embedded and hence $L$ is strongly Alexandrov embedded.  By
Theorem~2 in~\cite{enr1}, for any two components $\Sigma_1$ and
$\Sigma_2$ of $\cL$,  $\Sigma_1$ does not
lie in the mean convex component of $\rth-\Sigma_2$.
It follows that each of   the components of $\rth-\cL$, except for
one, is a mean convex domain
with one boundary component. This means  $\cL$
corresponds to a possibly disconnected strongly Alexandrov embedded
$H$-surface.
Finally, since closed Alexandrov embedded $H$-surfaces in $\rth$
are round spheres and no component of $M_n$ is spherical,
a monodromy argument implies that each leaf of the limit lamination
$\cL$ is non-compact. Setting $M_\infty:=\cL$ finishes the
proof of the first statement of the
theorem.

We next prove item~1 in the theorem.  Namely, we will prove that if the sequence
$\{M_n\}_{n\in\mathbb N}$ has uniformly positive injectivity radius
in $\rth$ or if $H=0$, then
the norm of the second fundamental form of $M_\infty$
is bounded. If the constant mean curvature of
$M_\infty$ is positive and if the sequence $\{M_n\}_{n\in\mathbb N}$ has
uniformly positive injectivity
radius in $\rth$, then the norms of the second fundamental
forms of the surfaces  $M_n$ converging to $M_\infty$ on any compact region of
$\rth$ are eventually bounded from above by a constant that
only depends on the curvature estimate given in Theorem~\ref{cest}; hence $M_\infty$
has uniformly bounded norm of its second fundamental form in this case.  If the
mean curvature of $M_\infty$ is zero, then as observed already
either $M_\infty$ is a lamination of $\rth$ by parallel planes
or else $M_\infty$ is a properly embedded connected
minimal surface in $\rth$ of finite genus.  If $M_\infty$ is a
lamination of $\rth$ by parallel planes then the claim
is clearly true. Otherwise, by the classification of the asymptotic
behavior of properly embedded minimal surfaces in $\rth$ with finite genus given in
the papers~\cite{bb2,col1,mpr6},
the norm of the second fundamental form of the unique leaf of
$M_\infty$ is also bounded in this case.
This last observation completes the proof of  item~1 in the theorem.

We next consider the proof of  item~2 in the theorem.
Namely, we will prove that if there
exist positive numbers $I_0, H_0$
such that for $n$ large either the injectivity radius functions of the surfaces $M_n$
at $\vec{0} $ are bounded from above by $ I_0$ or $H_n\geq H_0$, then $M_\infty$
is a strongly Alexandrov embedded
$H$-surface  and
there exist a positive constant $\eta=\eta (M_\infty)$ and
simple closed oriented curves $\g_n \subset M_n$ with scalar
fluxes $F(\g_n)$ with $\lim_{n\to\infty} F(\g_n)=\eta$.

We first show that $M_\infty$ cannot be
a lamination of $\rth$ by parallel planes.
Arguing by contradiction, suppose that the sequence
$\{M_n\}_{n\in\mathbb N}$ converges to a lamination of $\rth$ by parallel planes.
In particular  $\lim_{n\to\infty}H_n=0$ and
the injectivity radius functions of the surfaces $M_n$
at $\vec{0} $ are bounded from above by $I_0$.  Then,
 for $n$ large, the Gauss equation implies that
 the $\limsup K_{M_n}$  of the Gaussian curvature functions
 of the surfaces $M_n$ is non-positive.
 Classical results on Jacobi fields along geodesics
in such surfaces imply that for $n$ large the exponential map of $M_n$ at $\vec 0$
 on the  closed disk in $T_{\vec 0}M_n$ of a certain radius $r_n\in (0, I_0]$
is a local diffeomorphism that is injective on the interior of
the disk but it is not injective along its boundary
circle of radius $r_n$ (see, for instance, Proposition~2.12, Chapter~13 of~\cite{doc2}).
Moreover, since the sequence $\{M_n\}_{n\in \N}$ has
locally bounded norm of the second fundamental form, the sequence
of numbers $r_n$ is bounded away from zero. Hence,
there exists a sequence of simple closed geodesic loops
$\a_n\subset M_n$ based at $\vec 0$ and of lengths uniformly
bounded from below and above
that are smooth everywhere
except possibly at $\vec 0$. By the nature of the convergence,
$\a_n$ converges to a geodesic loop in $M_\infty$ based at $\vec 0$.
Therefore $M_\infty$ cannot be
a lamination of $\rth$ by parallel planes.

We next prove
the existence of the 1-cycles $\g_n\subset M_n$ with non-zero flux
described in item~2 of the theorem.
Since $M_\infty$ cannot be
a lamination of $\rth$ by parallel planes, by the already proved first
main  statement of the theorem,
the sequence  $\{M_n\}_{n\in\mathbb N}$
converges with multiplicity one or two to a possibly
disconnected, non-flat strongly Alexandrov embedded
$H$-surface $M_\infty$ of genus at most $k$. Since the convergence to
$M_\infty$ is with  multiplicity one or two, a curve lifting argument
shows that in order to prove that item~2 holds,
it suffices to show that $M_\infty$ has non-zero flux.

If $\lim_{n\to\infty} H_n=0$ but
the injectivity radius functions of the $M_n$
at $\vec{0} $ are bounded from above by $I_0$, then the same
arguments as before imply that $M_\infty$ is not simply-connected
because a simply-connected minimal surface cannot contain a
geodesic loop. Thus,
by the results in~\cite{mt2}, the finite genus minimal surface $M_\infty$ must have
non-zero flux.

It remains to
consider the case that $H_n\geq H_0>0$. In this case  $M_\infty$ is
a  proper collection of
non-zero constant mean curvature surfaces,
each component of which is non-compact and the entire surface has finite genus
at most $k$. Abusing the notation, let $M_\infty$
denote the component containing the origin. If $M_\infty$ has injectivity radius
function uniformly bounded from below by a positive constant,
then it has uniformly bounded norm of the
second fundamental form by Theorem~\ref{cest} and again,  by the results
in~\cite{mt2}, $M_\infty$ has non-zero flux.

In other words,  item~2 can only fail if  $M_\infty$ has positive mean
curvature but the injectivity radius function is not bounded from below
 by a positive constant.
In this case we can  apply the blow-up argument described in Proposition~\ref{cor:5.7}.
Such a blow-up argument gives the following.  Let $p_n\in M_\infty$ be a sequence of points such
that $\lim_{n\to\infty}{I_{M_\infty}(p_n)}= 0$ and let $q_n$ be a sequence of points with
almost-minimal injectivity radius for $B_{M_\infty}(p_n,1)$, see Definition~\ref{amininj}.
Then, by Proposition~\ref{cor:5.7} there exist
 positive numbers $R_n$, $\lim_{n\to\infty}R_n= \infty$, such that after replacing by a subsequence the component
${\bf M}_n$ of $\frac{1}{I_{M_\infty}(q_n)}[M_\infty-q_n]\cap \B(R_n)$
 containing $\vec 0$ has boundary in $\partial\B(R_n)$ and the following properties hold:
\bit
\item ${\bf M}_n$ has finite genus at most $k$.
\item $I_{{\bf M}_n}(x)\geq 1\slash 2$ for any $x\in {\bf M}_n\cap \B(R_n\slash 2)$
and $I_{{\bf M}_n}(\vec 0)=1$. \item The mean curvatures ${\bf H}_n$ of the ${\bf M}_n$
converge to zero as $n $ goes to infinity.\eit

Suppose for the moment that  the sequence  ${\bf M}_n$  has locally bounded norm of the second fundamental form
in $\rth$.
By the already proven main statement of Theorem~\ref{H-lam-thm} applied to the sequence  ${\bf M}_n$,
a subsequence converges to a possibly
disconnected, strongly Alexandrov embedded
$H$-surface ${\bf M}_\infty$ of genus at most $k$
and every component of $M_\infty$ is non-compact.
Since $\lim_{n\to\infty} {\bf H}_n=0$ but
the injectivity radius functions of the ${\bf M}_n$
at $\vec{0} $ are bounded from above by $1$, then our previous
arguments imply that  the finite genus minimal surface ${\bf M}_\infty$ must have
non-zero flux.
Hence, we may assume that the sequence ${\bf M}_n$ fails to have locally bounded norm of the second
fundamental form. Assume for the moment that  Theorem~\ref{geometry2} holds. In
the case that we are considering, we can apply item~2 of  Theorem~\ref{geometry2} to conclude that
the surfaces ${\bf M}_n$ have non-zero flux,
which would mean that $M_\infty$ has non-zero flux as well. The construction of the closed
curves, called connection loops,
with non-zero flux is described in detail after Remark~\ref{moneortwo}.

In summary, the proof of
 the Theorem~\ref{H-lam-thm} will be complete once   Theorem~\ref{geometry2} is proven
 in Section~\ref{sec5}.

\section{The proof of Theorem~\ref{geometry1}.} \label{sec4}
In this section we will prove  Theorem~\ref{geometry1}.
Suppose that  $\{M_n\}_{n\in \mathbb N}$
is a sequence of compact $H_n$-surfaces in $\rth$
with finite genus at most $k$, $\vec{0}\in {M_{n}}$, ${M_{n}}$
contains no spherical components,
$\partial {M_{n}}\subset [\rth -\B(n)]$, the sequence has
locally positive injectivity radius in $\rth$ and
$\lim_{n\to \infty} |A_{{M_{n}}}|(\vec{0})=\infty$.
Since we will use some of the results proved here in the proof
of Theorem~\ref{geometry2}, we will for the moment not invoke the additional
hypothesis $\lim_{n\to \infty} I_{{M_{n}}}(\vec{0})=\infty$.

Since $\lim_{n\to \infty} |A_{{M_{n}}}|(\vec{0})=\infty$
and $I_{M_n}(\vec{0})$ is bounded from below by some positive
number, then Theorem~\ref{cest}
implies that $\lim_{n\to \infty} H_n =0$.
After replacing by a subsequence, there exists a smallest
closed nonempty set $\cS\subset \rth$
such that the sequence $\{M\}_{n\in \mathbb N}$ has locally bounded norm of
the second fundamental form in $\rth-\cS$ and
converges with respect to the $C^\a$-norm, for any $\a\in (0,1)$,  to a
nonempty minimal lamination $\cL$ of
$\rth-\cS$; the set $\cS$ is smallest in the sense that every
subsequence fails to converge to a minimal lamination
in a proper subset of $\cS$.
The proofs of the existence of $\cS$ and $\cL$ are the
same as those appearing in the proofs of the
first three items in Claim~3.4 in~\cite{mt9} and we
refer the reader to~\cite{mt9} for the details.

We begin by studying the geometry of $\cL$ and $\cS$.
The local analysis presented here
 is analogous to and inspired by the one given in the minimal case  considered by
 Colding and Minicozzi in~\cite{cm25}. Indeed the structure of the lamination
 nearby points in $\cS$ is identical.

Let $p\in \cS$. After replacing by a subsequence,
there exists a sequence of points $p_n\in {M_{n}}$ converging to $p$
such that the norm of the second fundamental form of
${M_{n}}$ at $p_n$ is at least $n$. Since we may assume that the
injectivity radius function of
$M_n$ is at least some $\ve>0$ at  $p_n$, then applying  Theorem~\ref{thm1.1},
we find that
for $n$ large,
the intersection of each $H_n$-disk $B_{{M_{n}}}(p_n,\ve)$  with
$\B(p,  \de_1\ve)$
contains a component ${M_{n}}(p_n,  \de_1\ve)$ that is an $H_n$-disk
with boundary in the boundary
of $\B(p_n,  \de_1\ve)$.
Theorem~\ref{mainextension} now gives that for $n$ large,
there exists a collection of 3-valued
graphs $\{G_1(n),\ldots,G_{k(n)}(n)\}$ with inner boundaries
converging to $p$, norms of their gradients   at most one and
$\lim_{n\to \infty} k(n)= \infty$. Since $\{G_1(n),\ldots,G_{k(n)}(n)\}$ is a
collection of embedded and pair-wise disjoint 3-valued graphs contained in a compact ball,
it must contain a sequence of 3-valued graphs for which the distance between the
sheets is going to zero. Hence, after reindexing, we can assume that
the 3-valued graphs $G_1(n)$ are collapsing in
the limit to a minimal disk $D(p)\subset \B(p,s)$ of gradient at most
one  over its tangent plane at $p$ and where
$\partial D(p)\subset \partial \B(p,s)$ and
$s<\delta_1\ve$ is  fixed and depending on $\ve$;  actually one produces
the punctured graphical disk
\[
D(p,*)=D(p)-\{p\}
\]
 as a limit
and then $p$ is seen to be a removable singularity.
\begin{figure}
\begin{center}
\includegraphics[width=7cm]{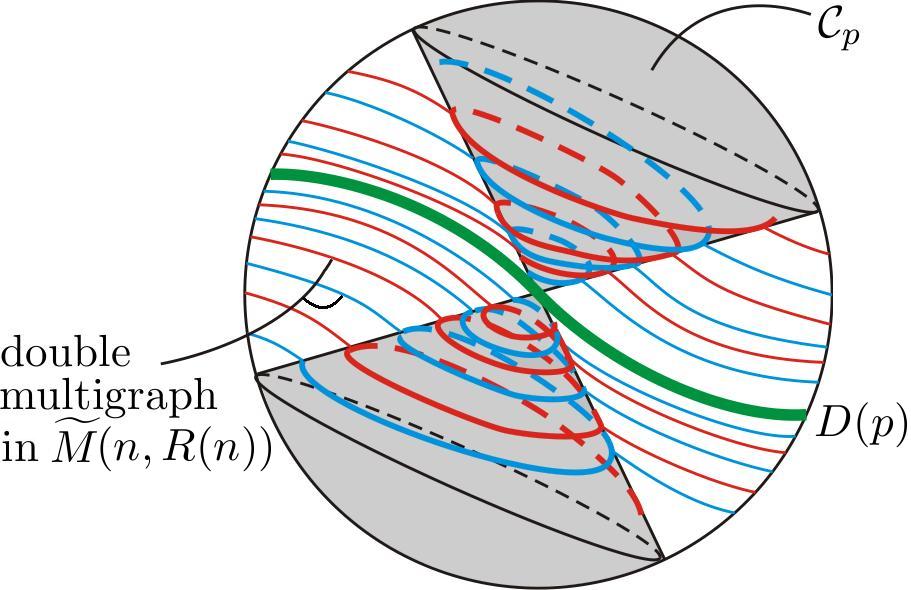}
\caption{The local {\em Colding-Minicozzi picture}  around a
point where the curvature blows up. The stable punctured disk
$D(p,*)$ appears in the limit lamination.}
\label{fig2cone}
\end{center}
\end{figure}

By the  one-sided curvature estimate in Corollary~\ref{cest2},
the 3-valued graph $G_1(n)$
gives rise to curvature estimates at points of ${M_{n}}$ nearby $G_1(n)$
and, as $n$ goes to infinity, these curvature estimates give rise to
curvature estimates in compact subsets of the complement set
\[
W(p)= \B(p,s)-\cC_p
\]
of some closed solid double cone $\cC_p$ with axis being the normal line
to $D(p)$ at $p$. In other words,
after replacing by a subsequence,
the surfaces ${M_{n}}$ have locally bounded norm of the second
fundamental form in $W(p)$, which implies $W(p)\cap\cS=\O$.
This observation implies that for every point
$p\in \cS$, one has a minimal lamination $\cL_{W(p)}=\cL\cap W(p)$
of $W(p)$ as  described in previous paragraphs: see
Figure~\ref{fig2cone}. This local picture is exactly the same as the one that
occurs in
the case where the mean curvatures
of the surfaces ${M_{n}}$ are zero; as in the
minimal case, we refer to it as the
local {\em Colding-Minicozzi picture} near points in $\cS$.
  See the discussion following Definition~4.9  in~\cite{mpr14}
for a more detailed analysis  of this picture.

\begin{definition}
Given $p\in \cS$, let $L_p$ be the leaf of $\cL$ containing the
punctured disk $D(p,*)$.
\end{definition}

The
arguments appearing in the proof of this claim are based on the proof of the similar
Lemmas~4.10, 4.11 and 4.12 in~\cite{mpr14}.

\begin{claim}\label{epsdistance}
The closure of  $L_p$ in $\rth$ is
a  plane $\ov L_p$ which intersects $\cS$
in a discrete set of points.
\end{claim}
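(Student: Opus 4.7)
The plan is to follow the strategy of Lemmas~4.10--4.12 in~\cite{mpr14}, adapted to the present $H_n\to 0$ setting: establish stability of $L_p$ from the limit-leaf structure, combine stability with the local Colding-Minicozzi picture to show $\ov L_p$ is a plane, and finally deduce discreteness from the cone geometry of $\cS$ near each of its points.

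First I would show that $L_p$ is a limit leaf of $\cL$. By Theorem~\ref{mainextension} applied at $p$, for every $j\in\N$ and $n$ large the surface $M_n$ contains $j$ pairwise disjoint 3-valued graphs with inner boundaries collapsing to $p$. Taking a diagonal subsequence in $j$, adjacent sheets of these graphs collapse onto $D(p,*)$, and these sheets lie in leaves of $\cL$ distinct from $L_p$. Hence $L_p$ is accumulated on from both sides by other leaves of $\cL$, so $L_p$ is a limit leaf, and by Theorem~4.3 in~\cite{mpr19} it is stable.

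Second, I would argue $\ov L_p$ is a plane. Stability of $L_p$ yields a Schoen-type pointwise curvature bound on $L_p$ that is uniform away from $\cS$. At each point $q\in \ov L_p\cap \cS$, the Colding-Minicozzi picture provides a graphical disk $D(q)\subset \B(q,s)$ over its tangent plane $T_qD(q)$ with gradient at most one. Combining this local graphical description with the uniform curvature bound, I would show that $L_p$ extends smoothly across $q$: an argument on the two components of $\B(q,s)\setminus D(q)$, using that $L_p$ is either disjoint from or equal to the leaf $L_q$ containing $D(q,*)$, forces $L_p = L_q$ near $q$, so that $q$ is a removable puncture of $L_p$. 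Hence $\ov L_p$ is a complete, embedded, stable minimal surface in $\rth$, and by the classification of complete stable minimal surfaces invoked in~\cite{lor2,ros9}, $\ov L_p$ is a plane.

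Finally, for discreteness of $\cS\cap \ov L_p$: by the previous step, for every $q\in \cS\cap \ov L_p$ the plane $\ov L_p$ contains $D(q)$, so $T_q\ov L_p=T_qD(q)$ is the plane normal to the axis of the Colding-Minicozzi double cone $\cC_q$. Since $\cS\cap \B(q,s)\subset \cC_q$ while $\ov L_p\cap \cC_q=\{q\}$ locally, the point $q$ is isolated in $\cS\cap \ov L_p$. The main obstacle will be the removable-singularity step in the second paragraph: cleanly identifying $L_p$ with $L_q$ near each $q\in \ov L_p\cap \cS$ requires combining the stability-based curvature bound, the graphical description from Theorem~\ref{mainextension}, and the maximum principle to rule out the possibility that $L_p$ approaches $q$ as a distinct leaf lying on one side of $D(q)$.
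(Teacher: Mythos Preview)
Your outline follows the right broad strategy, but the removable-singularity step you flag as the main obstacle is indeed where the proposal breaks down, and the tools you list are not the ones that work. Stability of $L_p$, the graphical description of $D(q)$, and the maximum principle do not, even in combination, rule out the scenario you correctly identify: $L_p$ could accumulate on $D(q,*)$ from one side as a distinct leaf, with infinitely many graphical sheets piling up on $D(q)$. The Schoen-type curvature bound from stability holds only away from $\cS$, so it does not control $|A_{L_p}|$ near $q$; the maximum principle needs a point of tangency, which mere accumulation does not provide; and the graphical description concerns $D(q)$, not the structure of $L_p$. None of these force $L_p=L_q$. The paper's argument is quite different: it passes to the abstract metric completion $\wh L_p$ and reduces to showing that every intrinsic Cauchy sequence $q_k\in L_p$ converging in $\rth$ to $q\in\ov L_p\cap\cS$ must eventually lie in $D(q,*)$. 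If instead the $q_k$ lie in a component $\Delta$ of $\B(q,s)-D(q)$, one produces a family of disks in $\Delta$ converging $C^1$ to $D(q)$ (via the removable-singularity theorem for stable minimal laminations, Theorem~1.2 in~\cite{mpr10}, when $q$ is isolated in $\cS\cap\ov\Delta$, or via the punctured disks $D'(p_i)$ from the Colding--Minicozzi picture at nearby singular points otherwise). Each such disk separates late $q_k$'s from $q$ inside $\B(q,\rho)$, so any path in $L_p$ joining them must exit $\B(q,\rho)$; this gives a uniform positive lower bound on $d_{L_p}(q_k,q_{k'})$, contradicting Cauchy. This separation idea is the missing ingredient.

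A smaller issue in your first step: the phrase ``these sheets lie in leaves of $\cL$ distinct from $L_p$'' conflates sheets of $M_n$ with leaves of the limit lamination. The $j$ disjoint $3$-valued graphs live in $M_n$; their collapse onto $D(p,*)$ does not by itself manufacture distinct leaves of $\cL$ accumulating on $L_p$. The paper instead argues on the annulus $A=W(p)\cap\partial\B(p,\ve)$: the spiraling arcs $\a_i(n)\subset M_n\cap A$ joining the two boundary circles force the limit lamination $\cL_A$ of $A$ to meet every homotopically nontrivial simple closed curve, so the circle $D(p,*)\cap A$ cannot be isolated in $\cL_A$, and hence $D(p,*)$ is a limit leaf of $\cL_{W(p)}$.
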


\begin{proof}
We claim that  the punctured disk $D(p,*)$ in the Colding-Minicozzi
picture at $p$ is  a limit leaf of the local
lamination $\cL_W(p)$ of $W(p)$. Recall that in
$W(p)$ the surfaces $M_n$ have uniformly bounded norm of the second fundamental
form at points of intersection with the
annulus $A=W(p)\cap \partial \B(p,\ve)$. After replacing $\cC_p$ by a cone of wider aperture and choosing
$\ve>0$ sufficiently small, for $n$ sufficiently large, the annulus
$A$ contains a pair of
spiraling arcs $\a_1(n), \a_2(n)\subset A\cap M_n$
that begin
at one of the  boundary components of $A$ and end at its other boundary component.
Furthermore, as $n\to \infty$, the arcs  $\a_1(n), \a_2(n)$ converge to
a limit lamination $\cL_A$ of $A$ that contains
the simple closed curve $D(p,*)\cap A$, which  is a graph of small gradient
over its projection to the tangent space of $D(p)$ at $p$.  Since every
homotopically non-trivial simple closed curve in $A$ intersects $\a_1(n)$,
then by compactness, such a simple closed curve also intersects  $\cL_A$.  In particular,
$D(p,*)\cap A$ must be a limit leaf of  $\cL_A$.
It follows that $D(p,*)$ is a limit leaf of $\cL_W(p)$ of $W(p)$,
which proves our claim.

Since the punctured disk $D(p,*)$ in the Colding-Minicozzi
picture at $p$ is  a limit leaf of the local
lamination $\cL_W(p)$ of $W(p)$, $L_p$ is a limit leaf of
$\cL$, and thus it is stable. Consider $L_p$ to be a Riemannian surface
with its related metric space structure, namely,  the
distance between two points in $L_p$
is the infimum of the lengths of arcs on the leaf that
join the two points. Let $\wh{L}_p$
be the abstract metric completion of ${L_p}$. Since $L_p$ is a subset of
$\rth$, $\rth$ is complete and extrinsic distances are at most equal to
intrinsic distances, then the inclusion map of $L_p$ into $\rth$ extends uniquely
to a continuous map from  $\wh{L}_p$ into $\rth$,
and the image of $\wh{L}_p$ is contained in the
closure $\ov L_p$ of $L_p$ in $\rth$. Note that this continuous map
sends a point $q\in  \wh{L}_p -L_p$ to a point
of $\ov L_p\cap \cS$, which with an abuse
of notation we still call $q$. Suppose
$q\in \ov L_p\cap \cS$ is the induced inclusion
into $\rth$  of a point in  $\wh{L}_p$
and let
$\{q_k\}_k\in {L_p}$ be a Cauchy sequence converging to $q$.
If for all $q\in \ov L_p\cap \cS$, the related Cauchy
sequence $q_k$ lies in the punctured disk
$D(q,*)$, then   the inclusion of the completion
$\wh{L}_p$ of $L_p$ in $\rth$ would be a complete
minimal surface  in $\rth$.  Since  $\wh{L}_p$
would be stable outside of a discrete set of points and since for any compact minimal
surface $\Lambda$ with boundary, $\Lambda $ is stable if
and only $\Lambda $  punctured in a finite set of points is stable, then it
follows that the minimal surface  $\wh{L}_p$  is stable.
Hence,  $\wh{L}_p$ viewed in $\rth$ would be a plane equal to $\ov L_p$~\cite{cp1,fs1}.
Thus, in order to show that $\ov L_p$ is a plane, it suffices to show
that for $k$ large, the points $q_k$ lie in the punctured disk
$D(q,*)$.

Arguing by contradiction, suppose that for some $q\in \ov L_p\cap \cS$
and $k$ large, $q_k\not\in D(q,*)$.
Clearly for $k$ large, $q_k$ is arbitrarily close to $q$
in $\rth$ and in particular, $q_k\in \B(q,s)$. Then it
follows that, after extracting a subsequence, for $k$ large,
the points $q_k$ lie in the same component $\Delta$ of
$\B(q,s)-D(q)$.
First consider the
special case where $q$ is an isolated point in $\cS\cap \ov{\Delta}$,
that is, for $\rho$ sufficiently small,
$ \ov{\Delta}\cap \cS\cap \B(q,\rho)=\{q\}$. Then
$\ov L_p\cap  \ov{\Delta}\cap[\B(q,\rho)-q]$
is a minimal lamination of  $\B(q,\rho) -\{q\}$ with stable leaves and thus
$q$ is a removable singularity of this minimal
lamination by Theorem~1.2 in~\cite{mpr10}.
This regularity property implies that for $\rho$ sufficiently small,
$\overline \Delta\cap \{L_p\cup \{q\}\}\cap\B(q,\rho)$
contains a collection of disks $\{D_n\}_{n\in \N}$ in $\cL$
with boundary curves in $\partial\B(q,\rho)$
that converge
$C^1$ to the disk $D(q)$ as $n$ goes to infinity.
Since the points $q_k$ lie in  components of
$\overline \Delta \cap \{L_p\cup \{q\}\}$ that are different from $D(q)$,
then their intrinsic distances to $q$ in $\widehat L_p$ would be
bounded uniformly from below by $\rho/2$ for $k$ large; this is  because each such point $q_k$
is separated in $\B(q,\rho)$ from $q$ by the disk ${D}_n$ for $n$ sufficiently large.
Therefore, in this case the sequence of points $q_k$ cannot be a Cauchy
sequence converging to $q$ in $\widehat L_p$.

The case when $q$ is not an isolated point of
$\cS\cap \ov{\Delta}$ can be treated in a
similar manner. If there exists a sequence of points
$p_i\in \cS\cap \ov{\Delta}$ converging to $q$, then for $i$ large, through
each of these points there would be a punctured disk
$[D'(p_i)-\{p_i\}]\subset L_q$  punctured at $p_i$
with boundary in $\partial \B(p,\rho)$
and the disks $D'(p_i)$ converge $C^1$ to $D(p)$.
If two points, $x_1, x_2$, in $L_p\cap  \Delta\cap \B(q,\rho/2)$, $\rho$ small, lie on
different disks or are separated in $\Delta$ by one of the disks $D'(p_i)$, then the
distance between $x_1$ and $x_2$ in $L_p$ would be bounded from below by
$\rho/3$ for $k$ large.  Since the points $p_i$ converge in $\Delta\cap \B(q,\rho/2)$
to $q$, there is always a disk $D'(p_i)$ that eventually
separates points in the Cauchy sequence  $\{q_k\}_k$. Hence,
the sequence $\{q_k\}_k$ cannot be a Cauchy sequence unless,
for $k$ large, the points $q_k$
lie in  $D(q,*)$. By the argument given in the first paragraph of
the proof, this implies that $\ov L_p$ is a plane.

Finally, the fact that $\ov L_p\cap \cS$
is a discrete set of points in the plane  $\ov L_p$ follows
 from the geometry of the Colding-Minicozzi picture.
This completes the proof of the claim.
\end{proof}

By Claim~\ref{epsdistance}, for any $p\in \cS$
the closure of  $L_p$ in $\rth$ is
a  plane $\ov L_p$ which intersects $\cS$
in a discrete, therefore countable, set of points. After
applying a fixed rotation around the origin,
we will assume that $L_{\vec{0}}$ and $L_p$, for
any $p\in\cS$, are horizontal planes.

The  arguments already considered
in the proof of Theorem~\ref{H-lam-thm}  can be adapted to show
that the leaves of the minimal lamination
$\cL$ have genus at most $k$
and hence have  finite genus. For the sake of
completeness, we include the proof
of this key topological property for the leaves of $\cL$.

\begin{claim} \label{FG-claim}
Each leaf $L$ of $\cL$ has finite genus at most $k$.
\end{claim}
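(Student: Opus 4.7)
The plan is to argue by contradiction. Suppose some leaf $L$ of $\cL$ has genus strictly greater than $k$, possibly infinite. Exhausting $L$ by compact subdomains with smooth boundary, I can then choose a connected compact subsurface $\Omega\subset L$ with smooth boundary whose genus $g:=\mathrm{genus}(\Omega)$ satisfies $g>k$; in particular $g\geq 1$ since $k\geq 0$. Because $\Omega$ is compact and $L\cap\cS=\emptyset$, there is a tubular neighborhood $U\subset\rth-\cS$ of $\Omega$ on which the nearest-point projection $\Pi_L\colon U\to L$ is a smooth submersion.

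I would next invoke the previously established $C^\alpha$ convergence of $\{M_n\}_{n\in\N}$ to $\cL$ on compact subsets of $\rth-\cS$, which is in fact smooth since the $M_n$ have locally bounded second fundamental form in $\rth-\cS$. After slightly shrinking $U$, for $n$ large the intersection $M_n\cap U$ is a finite disjoint union of smooth normal graphs over $\Omega$, so $\pi_n:=\Pi_L|_{M_n\cap U}$ is an unramified smooth covering onto $\Omega$ of some finite degree $m_n\geq 1$; the finiteness of $m_n$ is ensured by the uniform local area bound in $U$ coming from the curvature estimates. Writing $\widetilde\Omega_n:=M_n\cap U$, each connected component $\widetilde\Omega_n^{(i)}$ is itself an unramified cover of $\Omega$ of some positive degree $m_n^{(i)}$.

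A topological computation concludes the argument. Multiplicativity of Euler characteristic under the covering $\pi_n|_{\widetilde\Omega_n^{(i)}}$, combined with the inequality $b\big(\widetilde\Omega_n^{(i)}\big)\leq m_n^{(i)}\,b(\Omega)$ on numbers of boundary circles, gives
\[
\mathrm{genus}\big(\widetilde\Omega_n^{(i)}\big)\ \geq\ m_n^{(i)}(g-1)+1\ \geq\ g\ >\ k.
\]
But $\widetilde\Omega_n^{(i)}$ is a connected compact subsurface of the orientable surface $M_n$, so $\mathrm{genus}\big(\widetilde\Omega_n^{(i)}\big)\leq\mathrm{genus}(M_n)\leq k$, a contradiction. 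Hence every leaf of $\cL$ has genus at most $k$.

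The main obstacle is the rigorous verification of step~2: that $M_n\cap U$ really does close up into a finite union of complete normal graphs over $\Omega$, with no ``partial sheets'' terminating in the interior of $U$. This is exactly what the smooth convergence on compact subsets of $\rth-\cS$, together with the local curvature and area bounds in $U$, provides; indeed the same mechanism is what was invoked earlier, in the proof of Theorem~\ref{H-lam-thm}, to control the multiplicity of convergence along non-flat leaves.
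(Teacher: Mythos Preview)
Your approach differs from the paper's, and the gap you flag in step~2 is more substantial than your final paragraph suggests.

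The paper argues by cases on how $\ov L$ meets the singular set $\cS$. If $L=L_p$ for some $p\in\cS$, Claim~\ref{epsdistance} already gives that $\ov L_p$ is a plane, so genus zero. If $\ov L\cap\cS=\emptyset$, then $\ov L$ is a minimal lamination of all of $\rth$ and one is back in the setting of Theorem~\ref{H-lam-thm}. The delicate case is $p\in\ov L\cap\cS$ with $L\neq L_p$: here the paper shows $L$ lies in the slab or half-space bounded by the relevant planes $\ov L_p,\ov L_q$, and then proves $L$ is \emph{properly embedded} in that open region (ruling out limit leaves of the closure of $L$ via stability and the argument of Claim~\ref{epsdistance}). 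Proper embeddedness in a simply-connected open set yields a genuine two-sided regular neighborhood of $L$, after which the multiplicity-one-or-two argument from the proof of Theorem~\ref{H-lam-thm} transfers directly.

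Your local covering argument over a compact $\Omega\subset L$ tries to bypass this structure, but the assertion that $M_n\cap U$ consists of \emph{complete} normal graphs over $\Omega$ (no partial sheets) requires precisely the ingredient the paper works to establish: that $L$ is isolated in $\cL$ near $\Omega$. The ``same mechanism'' you cite from Theorem~\ref{H-lam-thm} used that $L$ is complete in $\rth$, so that non-flat $\Rightarrow$ unstable $\Rightarrow$ not a limit leaf; here $L$ lives only in $\rth-\cS$ and may fail to be complete in $\rth$, so that chain of implications is unavailable without the case analysis above. Separately, your claim that curvature estimates give a uniform local area bound is wrong---curvature bounds alone do not control area (many nearly-parallel sheets). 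The finiteness of $m_n$ for each fixed $n$ follows from compactness of $M_n$, but neither a uniform bound on $m_n$ nor the exclusion of partial sheets follows from curvature alone. Your Euler-characteristic computation is clean and would finish things once $L$ is known to have a product neighborhood in $\cL$; securing that neighborhood, however, is the real content of the claim, and it is what the paper's case analysis provides.
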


\begin{proof}
First suppose that $L=L_p$ for some $p\in \cS$. In this case $\ov L_p$ is a
plane and so $L$ has genus zero, which implies that the claim holds for $L$.

Next consider the case $\ov{L}\cap \cS=\O$.  In this
case $\ov{L}$ is a minimal lamination
of $\rth$ and the arguments in the proof of
Theorem~\ref{H-lam-thm} imply that the claim holds for $L$.

Finally, consider the case that
$p\in \ov{L}\cap \cS$ and $L\neq L_p$.  In this case,
$L$ lies in a halfspace component  $\cH$ of $\rth-\ov L_p$.
Suppose for the moment
that $  (\ov{L}\cap \cS)-\ov L_p\neq \O$ and let
$q\in(\ov{L}\cap \cS)-\ov L_p$.  In this subcase,
$L$ is contained in the open slab $\cT$ of $\rth$
with boundary planes $\ov L_p$ and $\ov L_q$.
Since $L$ is connected, it must intersect every horizontal
plane contained in $\cT$ and $\cT\cap \cS =\O$.

We claim that $L$ is properly embedded in $\cT$.  If not,
then the closure of $L$ in $\cT$ is a minimal
lamination of $\cT$ with a limit leaf $X$, which is stable.
By the same argument as in Claim~\ref{epsdistance}, stability implies that $X$ extends
across the closed countable set
$\ov{L}\cap \cS\subset(\ov L_p\cup \ov L_q)\cap \cS$ to a
complete stable minimal surface
in $\rth$.  Hence, $X$ is a horizontal plane
in $\cT$ which is disjoint from $L$, which contradicts the
discussion in the previous paragraph.
Hence, in this subcase $L$ is properly embedded in
$\cT$.
A similar argument shows that if $  (\ov{L}\cap \cS)-\ov L_p=\O$,
then the leaf $L$ is properly embedded in the half space $\cH$.  Hence,
in either case, $L$ is properly embedded in an open simply-connected
subset of $\rth$ and so it separates this
open set and has an open regular neighborhood in it.  Now the
 arguments in the proof of Theorem~\ref{H-lam-thm} imply that $L$ has finite
 genus at most $k$.
\end{proof}

By using Claims~\ref{epsdistance} and~\ref{FG-claim}, the next claim follows. Since the proof of
Claim~\ref{onepoint} is almost identical to the proof
of the first statement in  Claim~3.2 in~\cite{mt13}, we omit it here.

\begin{claim} \label{onepoint}
For any $t\in\mathbb{R}$, the intersection $\{x_3=t\}\cap\cS$ is nonempty.
\end{claim}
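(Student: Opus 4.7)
The plan is to deduce Claim~\ref{onepoint} from the Limit Lamination Theorem for $H$-disks (Theorem~\ref{thm2.1}) applied to a sequence of intrinsic disks extracted from the $M_n$, and then transfer the resulting singular set back to the original sequence.

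\textbf{Step 1 (auxiliary sequence of $H_n$-disks).} Since $I_{M_n}(\vec 0)\to\infty$, for any $R_n\leq I_{M_n}(\vec 0)$ the intrinsic ball $\wt M_n:=B_{M_n}(\vec 0,R_n)$ is an embedded $H_n$-disk through the origin. Taking $R_n\leq n/a$ with $a$ the constant from Theorem~\ref{main2}, and setting $r_0:=2/|A_{M_n}|(\vec 0)\to 0$, the hypothesis $\sup|A_{M_n}|>r_0^{-1}$ on a small intrinsic ball around $\vec 0$ is automatic, so Theorem~\ref{main2} applies and gives that every point of $\partial\wt M_n$ lies at extrinsic distance at least $\tfrac{2}{3}R_n-2r_0$ from $\vec 0$, which tends to infinity with $n$. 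After a harmless reindexing we may therefore assume $\partial\wt M_n\subset\rth-\B(n)$. Since $|A_{\wt M_n}|(\vec 0)=|A_{M_n}|(\vec 0)\to\infty$, the sequence $\{\wt M_n\}$ satisfies the hypotheses of Theorem~\ref{thm2.1}.

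\textbf{Step 2 (apply Theorem~\ref{thm2.1}).} Case~A is ruled out immediately: smooth convergence of the $\wt M_n$ to a helicoid $M_\infty$ would force $|A_{\wt M_n}|(\vec 0)\to|A_{M_\infty}|(\vec 0)<\infty$, contradicting the curvature blow-up at $\vec 0$. Hence Case~B applies, and after a rotation the $\wt M_n$ converge $C^\a$, away from a vertical line through the origin, to the foliation of $\rth$ by horizontal planes, the entire line being the associated singular set. This rotation is compatible with the one already fixed for the $M_n$ in the preceding discussion: both are determined by the requirement that the horizontal plane through $\vec 0$ appear as a leaf of the limit lamination in a neighborhood of $\vec 0$, and since $\wt M_n\subset M_n$ the two local limits at $\vec 0$ must coincide.

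\textbf{Step 3 (transfer to the original sequence).} Because $\wt M_n\subset M_n$ and $|A_{\wt M_n}|=|A_{M_n}|$ on $\wt M_n$, any sequence of points in $\wt M_n$ along which $|A_{\wt M_n}|$ blows up is also a sequence of points in $M_n$ along which $|A_{M_n}|$ blows up. Hence every such blow-up point lies in $\cS$, the smallest closed subset of $\rth$ outside of which $\{|A_{M_n}|\}$ is locally bounded. By Step~2 these blow-up points fill the full $x_3$-axis, so $\cS$ contains the $x_3$-axis, and in particular $\{x_3=t\}\cap\cS\neq\emptyset$ for every $t\in\R$.

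The main obstacle is the quantitative bookkeeping of Step~1, specifically arranging $R_n$ so that $\partial\wt M_n$ is pushed extrinsically out to $\rth-\B(n)$. This relies on Theorem~\ref{main2} and the blow-up $|A_{M_n}|(\vec 0)\to\infty$ to validate the chord-arc hypothesis with $r_0=2/|A_{M_n}|(\vec 0)$; a subordinate point is the compatibility of the rotation from Case~B of Theorem~\ref{thm2.1} with the rotation previously fixed for the $M_n$, both being pinned down by the horizontality of the leaf $L_{\vec 0}$ of $\cL$.
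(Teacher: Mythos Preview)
Your argument has a genuine gap: it relies on the hypothesis $\lim_{n\to\infty} I_{M_n}(\vec 0)=\infty$, which is \emph{not} available at the point where Claim~\ref{onepoint} is stated. The paper is explicit about this: immediately after the claim it writes ``We remark that we have obtained Claim~\ref{onepoint} without invoking this hypothesis,'' and the claim is later invoked verbatim at the start of Section~\ref{sec5}, where instead $\lim_{n\to\infty} I_{M_n}(\vec 0)=C<\infty$. Your Step~1 needs $R_n\leq I_{M_n}(\vec 0)$ with $R_n\to\infty$ in order for $\wt M_n=B_{M_n}(\vec 0,R_n)$ to be a disk of growing size; under the hypotheses actually in force (only locally positive injectivity radius and $|A_{M_n}|(\vec 0)\to\infty$) this fails, and the whole reduction to Theorem~\ref{thm2.1} collapses.

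The paper's route is different in kind. It uses only what has already been established without the injectivity-radius-going-to-infinity assumption: Claim~\ref{epsdistance} (each $\ov L_p$ is a horizontal plane meeting $\cS$ discretely) and Claim~\ref{FG-claim} (each leaf of $\cL$ has genus at most $k$), together with the local Colding--Minicozzi picture at points of $\cS$. From this structure one argues, as in Claim~3.2 of~\cite{mt13}, that $\cS$ cannot miss any horizontal level: roughly, if $\{x_3=t_0\}\cap\cS=\emptyset$, one takes an extremal height $t_1$ at which $\cS$ is still present and derives a contradiction from the cone/column structure at a point $p\in\cS\cap\{x_3=t_1\}$ combined with the finite-genus and planar-leaf information. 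This argument is intrinsic to the lamination $\cL$ and $\cS$, and does not require any disk extraction from the $M_n$. Under the extra hypothesis $I_{M_n}(\vec 0)\to\infty$ your approach is essentially what the paper does \emph{after} Claim~\ref{onepoint} to identify $\cS$ with a single line; but as a proof of the claim itself, in the generality the paper needs, it does not go through.
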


We now   invoke the last hypothesis in the statement of Theorem~\ref{geometry1}:
$$\lim_{n\to \infty} I_{{M_{n}}}(\vec{0})=\infty.$$
We remark that we have obtained Claim~\ref{onepoint} without invoking this hypothesis.
After replacing by a subsequence, for each $m\in \N$, there exists  an
 increasing sequence $N(m)\in \N$ such that the
injectivity radius function of $M_{N(m)}$ at $\vec{0}$ is
greater than ${m}/\de_1$,
where $\de_1$ is the constant given in Theorem~\ref{thm1.1}.
Therefore, by the same theorem, for any $m\in\N$,
the connected component $M(N(m))$ of
$M_{N(m)}\cap \B(m)$ containing the origin is an $H_{N(m)}$-disk with
$\partial M(N(m))\subset \partial \B(m)$.  For simplicity of notation and
after replacing by a further subsequence and relabeling,
we will use $M_n$ to denote the  sequence
$M_{N(m)}$ and so $M(n)$ will now denote $M(N(m))$.
After replacing by a further sequence, we will assume that item~B
of Theorem~\ref{thm2.1} holds.

Let $l$ denote the $x_3$-axis. Since
$\lim_{n\to \infty}|A_{M_n}|(\vec{0}) = \infty$, part (a) of item~B
of Theorem~\ref{thm2.1} shows that
the sequence $M(n)$ converges
away from $l$ to a foliation $\cL'$ of $\rth-l$ by punctured
horizontal planes. Part (b) of
item~B of Theorem~\ref{thm2.1} implies that given $R>0$,
if $n$ is sufficiently large
there exists a possibly disconnected compact subdomain
$\cC_{n}(R)$ of $M(n)$, with $[M(n)\cap \B(R/2)]\subset \cC_n(R) \subset \B(R)$
and with $\partial \cC_n(R)\subset \B(R)-\B(R/2)$,
consisting of a disk $\cD_n(R,1)$ containing the origin $\vec{0}$ and
possibly a second disk  $\cD_n(R,2)$. Moreover, the diameter of each connected component of
$\cC_{n}(R)$ is bounded by $3R$ and $\cD_n(R,i)\cap \B(R/n)\neq\mbox{\rm \O}$, for $i=1,2$.
Hence, if
$M_n\cap \B(R/2)=M(n)\cap \B(R/2)$ then the theorem follows. If that is not the case,
then we proceed as follows.

Suppose, after choosing a subsequence, that  for some $R>0$,
$M_n\cap \B(R/2)$ contains a component $\Delta_n(R)$ that
is not contained in  $\cC_n(R)$.
We first  show that even in this case, the sequence
$\{M_n\}_{n\in \N}$,
and not solely $\{M(n)\}_{n\in \N}$, converges to
the foliation $\cL'$ away from $l$. Since $M_n$
has locally positive injectivity
radius, the horizontal planar regions forming
on $M(n)$ away from $l$ imply that
the sequence
$\{M_n\}_{n\in\mathbb N}$ has locally bounded norm of the
second fundamental form in $\rth-l$; these
curvature estimates arise from the
intrinsic curvature estimates
in Corollary~\ref{cest2}. By the embeddedness of $M_n$, $M_n$
must converge to  $\cL'$ away
from $l$ as $n$ goes to infinity
and $l$ is again a line and nearby it the sequence $M_n$
has arbitrary large norm of the second fundamental form.
This discussion  proves that $\cS=l$ and $\cL=\cL'$
regardless of whether or not $M_n\cap \B(R/2)=M(n)\cap \B(R/2)$.
Moreover, using these curvature estimates and the double
spiral staircase structure of $\cD_n(R,1)$,  it is
straightforward to prove  that
  $\Delta_n(R)$ contains points $y_n$ converging
to $\vec{0}$.

After choosing a subsequence, for some
$R$ fixed and for every $n\in \N$,
$M_n\cap \B(R/2)\neq M(n)\cap \B(R/2)$. In this remaining case let
 $y_n$ be chosen as in the previous paragraph.

Assume that $\lim_{n} I_{M_n}(y_n)=\infty$; in fact, we
will prove this in Claim~\ref{claim:Inj=Inf}. Arguing similarly
to the previous discussion, after replacing by a subsequence,
we may  assume that $I_{M_n}(y_n)\geq n/\de_1$, where $\de_1$
is the constant given in Theorem~\ref{thm1.1}, and that
$\partial M(n)\subset\partial \B(R_n)$, with $R_n>2n$.
By Theorem~\ref{thm1.1}
the connected component $M'(n)$ of
$M_n\cap \B(y_n,n)$ containing $y_n$ is an $H_n$-disk with
$\partial M'(n)\subset \partial \B(y_n,n)$.
Item~B of
Theorem~\ref{thm2.1} implies that for $n$ sufficiently large,
there exists a possibly disconnected compact subdomain
$\cC_n'(R)\subset M'(n)$ with $\cC_n'(R) \subset \B(y_n,R)$
and with $\partial \cC'_n(R)\subset [\B(y_n,R)-\B(y_n,R/2)]$ consisting
of a disk $\cD'_n(R,1)$ containing  $y_n$ and possibly
a second disk $\cD'_n(R,2)$, where
each disk has intrinsic diameter bounded by $3R$
and $\cD'_n(R,i)\cap \B(y_n,R/n)\neq\mbox{\rm \O}$, for $i=1,2$.

Since $\lim_{n\to\infty}y_n=\vec 0$ and $R_n>2n$, $M(n)$
and $M'(n)$ are disks satisfying the following properties:
\bit
\item $M(n)\subset \B(R_n)$ and $\partial M(n)\subset \partial \B(R_n)$;
\item $M'(n)\subset \B(y_n, n)\subset \B(R_n)$ and
$\partial M'(n)\subset \partial \B(y_n,n)$;
\item $y_n\notin M(n)$ and $y_n\in M'(n)$.
\eit
 Then elementary separation properties give
 that $M(n)\cap M'(n)=\mbox{\rm \O}$. In particular,
 $\cC_n(R)\cap \cC'_n(R)=\mbox{\rm \O}$. Thus, to finish the
 proof assuming $\lim_{n\to \infty} I_{M_n}(y_n)=\infty$, it suffices to show that
$M_n\cap \B(R/2)\subset \cD_n(R,1)\cup\cD_n'(R,1)$.

Suppose that either $\cD_n(R,2)$ or $\cD_n'(R,2)$ existed.
Applying Corollary~\ref{cest-cor} would give that $\vec 0$
cannot be a singular point. Therefore,  $\cD_n(R,2)$ and
$\cD_n'(R,2)$ do not exist.
On the other hand, if  $M_n\cap \B(R/2)\neq[M(n)\cup M'(n)]\cap \B(R/2)$,
then by repeating the arguments used so far, there would exist
a sequence of point $x_n$ with $\lim_{n\to\infty}x_n=\vec 0$
and a third sequence of disks $\cD''_n(R)$ disjoint from
$\cD_n(R,1)\cup\cD_n'(R,1)$, with $x_n\in \cD''_n(R)$ and
$\partial \cD''_n(R)\subset [\B(x_n, R)-\B(x_n,R/2)]$.
Again, one would obtain a contradiction by applying Corollary~\ref{cest-cor}.
Therefore $M_n\cap \B(R/2)=[M(n)\cup M'(n)]\cap \B(R/2)$ and so, to
complete the proof of Theorem~\ref{geometry1}, it remains to prove the claim below.

\begin{claim} \label{claim:Inj=Inf}
$\lim_{n\to \infty} I_{M_n}(y_n)=\infty$.
\end{claim}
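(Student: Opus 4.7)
}

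I would argue by contradiction. Suppose that, after passing to a subsequence, $I_{M_n}(y_n)\leq I_0<\infty$ for all large $n$. Because the sequence $\{M_n\}_{n\in\N}$ has locally positive injectivity radius in $\rth$ and $y_n\in\B(R/2)$, one also has $I_{M_n}(y_n)\geq\ve_0>0$ for some $\ve_0$ independent of $n$. Thus $I_{M_n}(y_n)\in[\ve_0,I_0]$ for large $n$. The rest of the argument aims to reach a contradiction with $\lim_{n\to\infty}|A_{M_n}|(\vec 0)=\infty$ via the three-disk curvature estimate in Corollary~\ref{cest-cor}.

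First I would extract two disjoint $H_n$-disks meeting a common small ball around $\vec 0$. Fix $\rho\in(0,\ve_0)$. Applying Theorem~\ref{thm1.1} at $\vec 0$ (where $I_{M_n}(\vec 0)\to\infty$) produces an $H_n$-disk $\Sigma_n:=M_n(\vec 0,\de_1\rho)\subset M(n)$ with $\partial\Sigma_n\subset\partial\B(\de_1\rho)$. Applying Theorem~\ref{thm1.1} at $y_n$, using $I_{M_n}(y_n)\geq\ve_0$, produces an $H_n$-disk $\Sigma'_n:=M_n(y_n,\de_1\rho)\subset\Delta_n(R)$ with $\partial\Sigma'_n\subset\partial\B(y_n,\de_1\rho)$. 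Since $y_n\to\vec 0$ (in fact $|y_n|<R/n$), for $n$ large both $\Sigma_n$ and $\Sigma'_n$ meet $\B(\de_1\rho/2)$, and they are disjoint because $y_n\notin M(n)$ while $\Sigma_n\subset M(n)$ and $y_n\in\Sigma'_n$.

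Next I would extract a third pairwise disjoint $H_n$-disk meeting the same small ball $\B(\de_1\rho/2)$. This is where the bulk of the work lies. Three cases occur. If the staircase decomposition of $M(n)$ given by Theorem~\ref{thm2.1}~B produces a second component $\cD_n(R,2)$, then $\cD_n(R,2)\cap\B(R/n)\neq\emptyset$ and Theorem~\ref{thm1.1} applied at such a point $z_n$ yields a third disjoint $H_n$-disk meeting $\B(\de_1\rho/2)$. If some further component $\Delta'_n(R)$ of $M_n\cap\B(R/2)$ beyond $M(n)$ and $\Delta_n(R)$ has a point converging to $\vec 0$, the same argument provides the third disk. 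Otherwise, rescale $\Sigma'_n$ by the factor $n/(\de_1\rho)$, centering at $y_n$; the resulting sequence is a sequence of $H$-disks with boundaries on $\partial\B(n)$, mean curvatures tending to zero, and (since $I_{M_n}(y_n)$ is bounded in the original scale) intrinsic curvature blowing up at the origin after rescaling. Apply Theorem~\ref{thm2.1} to this rescaled sequence: Part~B must occur, yielding a second staircase disk that, upon scaling back, provides a third $H_n$-disk disjoint from $\Sigma_n$ and $\Sigma'_n$ and meeting $\B(\de_1\rho/2)$.

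Finally, with three pairwise disjoint $H_n$-disks $\Sigma_n,\Sigma'_n,\Sigma''_n\subset\B(\de_1\rho)$ whose boundaries lie outside $\B(\de_1\rho/2)$ and each meeting $\B(\ve\de_1\rho/2)$ for the constant $\ve$ of Corollary~\ref{cest-cor}, that corollary (after the appropriate scaling) gives $|A_{\Sigma_n}|(\vec 0)\leq K(\rho)$, a constant independent of $n$. Since $\Sigma_n\subset M_n$ this contradicts $\lim_{n\to\infty}|A_{M_n}|(\vec 0)=\infty$, completing the proof. The main obstacle is the third, ``no extra component'' subcase of producing the third disk: one must carefully verify that the rescaling of $\Sigma'_n$ satisfies the hypotheses of Theorem~\ref{thm2.1} (in particular that the rescaled curvature at the origin is bounded below by some positive constant, so Part~B applies) and that the resulting second staircase disk is indeed a new component on the original scale rather than a portion of $\Sigma_n$ or $\Sigma'_n$.
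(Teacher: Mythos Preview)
Your approach is genuinely different from the paper's. The paper does not use Corollary~\ref{cest-cor} here at all. Instead it exploits the bounded-injectivity-radius assumption directly: since $H_n\to 0$ the Gaussian curvatures of the $M_n$ are eventually non-positive, so $I_{M_n}(y_n)=T_n\to T<\infty$ forces the existence of simple geodesic loops $\alpha_n\subset M_n$ based at $y_n$ of length $2T_n\to 2T$. Because the $M_n$ converge smoothly to horizontal planes away from the axis $l$, these loops must collapse onto a vertical segment $\sigma\subset l$ (otherwise a subarc would limit to an infinite horizontal geodesic ray, contradicting the length bound). The paper then looks near the top endpoint $p_1$ of $\sigma$: the two ends $z_n(1),z_n(2)$ of an arc of $\alpha_n$ in $\B(q_n,\de_1 r)$ are becoming extrinsically arbitrarily close while remaining intrinsically at distance $\geq \de_1 r$, and since curvature blows up near $l$ this violates the quantitative chord-arc estimate of Theorem~\ref{main2}.

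Your Case~3 contains a genuine gap. You write that after rescaling $\Sigma'_n$ by $\lambda_n = n/(\de_1\rho)$ about $y_n$, ``(since $I_{M_n}(y_n)$ is bounded in the original scale) intrinsic curvature [is] blowing up at the origin after rescaling.'' This is a non-sequitur: bounded injectivity radius does not force large $|A|$ (a flat cylinder is the obvious counterexample), and under rescaling by $\lambda_n\to\infty$ the second fundamental form at the origin becomes $\lambda_n^{-1}|A_{M_n}|(y_n)$, for which you have no lower bound whatsoever --- the points $y_n$ are only known to lie in $\Delta_n(R)$ and to converge to $\vec 0$. So the hypothesis $|A|(\vec 0)\geq\ve$ of Theorem~\ref{thm2.1} is unverified and you cannot force Part~B. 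Even granting Part~B with two staircase disks, those disks are by construction \emph{subdisks of} the rescaled $\Sigma'_n$ and hence not disjoint from $\Sigma'_n$; at the original scale their boundaries lie in $\B(y_n,2/\lambda_n)$, so pairing them with $\Sigma_n$ in Corollary~\ref{cest-cor} forces you to work at scale $1/\lambda_n$ and yields only a curvature bound of order $\lambda_n\to\infty$ on $\Sigma_n$, which contradicts nothing. The ``main obstacle'' you flag at the end is in fact fatal as written, and the paper's geodesic-loop/chord-arc argument is what actually closes the claim.
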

\begin{proof}
Arguing by contradiction, suppose that
after replacing by a subsequence
$\lim_{n\to \infty} [I_{M_n}(y_n)=T_n]=T\in (0,\infty)$.
Since $\lim_{n\to\infty} H_n=0$, then,
 for $n$ large, the Gauss equation implies that
 the $\limsup K_{M_n}$  of the Gaussian curvature
 functions of the surfaces $M_n$ is non-positive.
 Classical results on Jacobi fields along geodesics
in such surfaces imply that for $n$ large the exponential map of $M_n$ at $y_n$
 on the closed disk in $T_{y_n}M_n$ of radius $T_n$
is a local diffeomorphism that is injective on the interior of
the disk but it is not injective along its boundary
circle of radius $T_n$. Hence,
there exists a sequence of simple closed geodesic loops
$\a_n\subset M_n$ based at $y_n$ and
of lengths $2T_n$ converging to $2T$ that are smooth everywhere
except possibly at $y_n$. Since the sequence  $\{M_n\}_{n\in\mathbb N}$
has locally positive injectivity radius in $\rth$,
there exists an $\ve\in(0,T)$ such that for $n$ large,
\[
I_{M_n}|_{M_n\cap\B(\vec 0, 5T)}\geq \ve.
\]
Therefore, if the intrinsic distance between two points $x$
and $y$ in $M_n\cap \B(\vec 0, 5T)$ is less
than $\ve$, then there exists a
unique length minimizing  geodesic in $B_{M_n}(x,\ve)$
 connecting them.

Since
 the sequence of
surfaces is converging   to  flat planes away
from $l$ and $\lim_{n\to\infty }y_n= \vec 0$,
if for some divergent sequence of integers $n$,
there were points
$p_n\in \a_n$ that lie outside of some fixed
sized cylindrical neighborhood of $l$ and converge to a point $p$,
then a subsequence of the geodesics $\a_n$ would
converge to a set containing
an infinite geodesic
ray starting at $p$ in the horizontal plane containing $p$. This follows
because the converge is smooth away from $l$. If there were a sequence of
points $p_n\in\a_n$ converging to a point $p$ not in $l$ then a neighborhood
$U_n$ of $p_n$ would converge smoothly to a horizontal flat disk $D(p)$ centered
at $p$. Since $\a_n$ is a geodesic, $\a_n\cap U_n$ would converge to a diameter $d$
of $D(p)$ and there would be a point $q\in D(p)$ which is the limit of
points $q_n\in\a_n$ and that is further away from $l$ then $p$. The convergence
is smooth nearby $q$. Therefore, applying the previous argument gives that the limit
set of convergence of $\a_n$ can be extended at $q$ in the direction $\overrightarrow{pq}$.
Iterating this argument would give that the limit set of $\a_n$ contains an infinite geodesic
ray starting at $p$ in the horizontal plane containing $p$. This would give a
contradiction because the loops have
length less than $3T$. Therefore after replacing by a
subsequence, the $\a_n$ must converge to a vertical
segment $\sigma$ containing the origin and of length
less than or equal to $T$.

Note that by Theorem~\ref{thm1.1}, for $n$
large, $\a_n$ cannot be contained in
$\B(y_n, \de_1 \ve)$, otherwise it would be contained in
$B_{M_n}(y_n, \ve \slash 2)$ and, by the properties of $\ve$,
$B_{M_n}(y_n, \ve \slash 2)$ cannot contain
a geodesic loop such as $\a_n$. Therefore, if we
let $p_1$ and $p_2$
be the endpoints of the line segment $\sigma$,
without loss of generality, we can assume
that $p_1\neq \vec 0$ and that $x_3(p_1) \in [\delta_1 \ve,T]$.
Let $q_n$ be  points of $\a_n$ with a
largest $x_3 $-coordinate, and so $\lim_{n\to\infty}q_n=p_1$.
By arguments similar to the ones used in the previous paragraphs of
this proof, for any $r< \ve/ 2$,
$\a_n\cap \B(q_n, \de_1r)$ contains an arc component $\be_n$ with
$\partial \be_n= z_n(1)\cup z_n(2)\subset \partial \B(q_n, \de_1r) $
satisfying the following
properties for $n$ large:
\begin{enumerate}
\item $q_n\in\be_n$;
\item $\lim_{n\to\infty}|z_n(1)-z_n(2)|=0$;
\item $z_n(2)\in B_{M_n}(z_n(1), r) $
\item $\mbox{\rm dist}_{M_n} (z_n(1),z_n(2))\geq \de_1 r.$
\end{enumerate}

  Let $r:=\min \{\ve\slash a, \ve/ 2\}$, where $a$ is the
  constant given in Theorem~\ref{main2}.
  Then applying Theorem~\ref{main2} with $\vec 0$ replaced
  by $z_n(1)$ and $R=r$, we have that
if $\sup_{B_{\S}(z_n(1),r_0(n))}|A_{M_n}|>\frac1{r_0(n)}$
where $r>r_0(n)$,
then
\[
\frac{1}{3}\mbox{\rm dist}_{M_n} (z_n(1),z_n(2))<|z_n(1)-z_n(2)|+r_0(n).
\]
Since as $n$ goes to infinity, there are points
arbitrarily intrinsically close to
$z_n(1)$ and with arbitrarily large norm of the second fundamental form,
we can assume that $r_0(n)<\frac{ \de_1 \ve}{6 a} $. Combining  this,
$\mbox{\rm dist}_{M_n} (z_n(1),z_n(2))\geq \frac{ \de_1 \ve}{a}$ and the
previous inequality,   we have obtained that
\[
\frac{ \de_1 \ve}{6 a}<|z_n(1)-z_n(2)|.
\]
Since the right hand-side of this inequality is
going to zero as $n$ goes to infinity,
while the left hand-side is fixed, bounded away
from zero, independently on $n$, we
have obtained a contradiction, which  finishes the proof that
$\lim_{n \to \infty} I_{M_n}(y_n)=\infty$.
\end{proof}

Now that item~2 of Theorem~\ref{geometry1} is proved, we can apply Theorem~\ref{thm2.1} and
Remark~\ref{remark:spiral}  to obtain the double spiral staircase description in item~3 for
the each of the 1 or 2  components of $\cC_n$.

This final observation completes the proof of Theorem~\ref{geometry1}.

\section{The proof of Theorem~\ref{geometry2}.} \label{sec5}

In this section we will prove Theorem~\ref{geometry2}.
Suppose that  $\{M_n\}_{n\in \mathbb N}$
is a sequence of compact $H_n$-surfaces in $\rth$
with finite genus at most $k$, $\vec{0}\in {M_{n}}$, ${M_{n}}$
contains no spherical components,
$\partial {M_{n}}\subset [\rth -\B(n)]$, the sequence has locally
positive injectivity radius in $\rth$ and
\[
\lim_{n\to \infty} |A_{{M_{n}}}|(\vec{0})=\infty\,\text{ and
}\lim_{n\to \infty} I_{{M_{n}}}(\vec{0})=C,\]
 for some $C>0$.

After replacing by a subsequence,  Claim~\ref{onepoint} implies that
the surfaces $M_n$ converge $C^\alpha$, for any $\a\in(0,1)$, to a minimal
lamination $\cL$ outside of a closed
set $\cS$ with $\vec 0\in \cS$ and $x_3(\cS)=\R$, where each leaf of
$\cL$ is a horizontal plane punctured in a discrete set of points in $\cS$.

The Colding-Minicozzi picture of $\cL$ around
each point of $\cS$ together with
the curvature estimates in Theorem~\ref{th} and the fact that
the foliation $\cL$ is a foliation of $\rth-\cS$ by
punctured horizontal planes imply that if
$\cS_0$ is a connected component of $\cS$, then $x_3(\cS_0)=\R$ and $\cS_0$
is a Lipschitz graph over the $x_3$-axis.  Moreover,
given $p\in \cS \cap \B(R)$, there exists $\delta:=\delta(R)>0$ such
that for $n$ large, the intersection
${M_{n}}\cap \B(p,\delta)$ consists of one or two disks. This is a
consequence of Corollary~\ref{cest-cor}, Theorem~\ref{thm1.1} and the
fact that for $n$ large, the injectivity radius
function of ${M_{n}}$ is bounded away from zero on
any fixed compact
set of $\rth$. By this observation and arguing
like in the proof of Theorem~1.1 in~\cite{mt13},
one obtains the following  result.

\begin{claim} \label{claim-lines} The set $\cS$
satisfies the following properties:
\bit
\item The set $\cS$ is a discrete collection of vertical
lines, one of which is the $x_3$-axis.
\item Given $R>0$, for $n$ sufficiently large
the intersection of each line segment  in $\cS\cap \B(R)$
is the $C^1$ limit with multiplicity at most two of analytic curves in $M_n$
which  are pre-images of the equator via the Gauss map.
\item Let $l$ be a line in $\cS$. Given $p\in l$
and $R>0$ such that $\B(p,R)\cap \cS \subset l$
then, for $n$ large, the  collection  $\cC_n$ of components of
${M_{n}}\cap \B(p,\frac R2)$ such that
$\cC_n\cap \B(p,\frac R4)\neq \mbox{\rm \O}$ consists of at most two disjoint disks.
Furthermore, each of the 1 or 2 disk components of \,$\cC_n$ is
contained in a   disk in $M_n\cap \B(p,R)$ with boundary curve in $\B(p,R)-\B(p,R/2)$, where these   disks 
have  the structure of
double spiral staircases, see Remark~\ref{remark:spiral}, with central columns that are graphs with small $C^1$-norms over an arc in $l\,\cap \B(R)$,
and $\cC_n\cap \B(p,\frac R4)$ is contained in the union of these subdisks.
\eit
\end{claim}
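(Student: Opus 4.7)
My plan is to establish the three items in sequence using the local Colding--Minicozzi picture developed in the paragraphs preceding the claim, the extrinsic three-disk curvature estimate of Corollary~\ref{cest-cor}, the weak chord-arc property of Theorem~\ref{thm1.1}, and the Limit Lamination Theorem for $H$-disks (Theorem~\ref{thm2.1}) together with the double spiral staircase description of Remark~\ref{remark:spiral}. The preceding discussion already shows each connected component $\cS_0$ of $\cS$ is a Lipschitz graph over the $x_3$-axis with $x_3(\cS_0)=\R$, and that for any $p\in\cS\cap\B(R)$ there is $\de=\de(R)>0$ with $M_n\cap\B(p,\de)$ consisting of one or two disks for $n$ large. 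For the discreteness statement in item~1, I argue by contradiction: if two distinct components of $\cS$ accumulated at a common point $p_0$, then in an arbitrarily small ball around $p_0$ one could locate three points of $\cS$, and the local Colding--Minicozzi picture at each would produce three pairwise disjoint sequences of components of $M_n$ with curvature blowing up at points converging to these three locations. After rescaling and applying Corollary~\ref{cest-cor}, one would obtain three pairwise disjoint $H$-disks with uniformly bounded curvature on $\B(\ve)$, contradicting the blow-up.

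To upgrade each Lipschitz component $\cS_0$ to a vertical line, I fix $p\in\cS_0$, select points $p_n\in M_n$ with $p_n\to p$ and $|A_{M_n}|(p_n)\to\infty$, and apply Theorem~\ref{thm1.1} to obtain disk components of $M_n\cap\B(p,\de)$. An appropriate rescaling together with Case~B of Theorem~\ref{thm2.1} and Remark~\ref{remark:spiral} shows that the disk has a double spiral staircase structure whose central column is $C^1$-close to the vertical segment centered at $p$. Since this holds at every point of $\cS_0$ and the central columns patch together consistently, $\cS_0$ must be a vertical line. The $x_3$-axis lies in $\cS$ because $\vec 0\in\cS$ and $\lim_n|A_{M_n}|(\vec 0)=\infty$, which finishes item~1.

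Item~3 then follows from the same local analysis localized at $p\in l\subset\cS$ with $\B(p,R)\cap\cS\subset l$. Because the horizontal leaves of $\cL$ are punctured at points of $l$ only, and the spiral staircase disks from Remark~\ref{remark:spiral} are the unique source of curvature blow-up in $\B(p,R)$, every component of $M_n\cap\B(p,R/2)$ that meets $\B(p,R/4)$ for $n$ large must lie in one of these spiral staircase disks. Discreteness of $\cS$ combined with Corollary~\ref{cest-cor} applied to a rescaling bounds the number of such disjoint components by two: a third one, pairwise disjoint from the first two, would yield three pairwise disjoint $H$-disks with unbounded curvature on $\B(p,\ve R)$. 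Theorem~\ref{thm1.1} then extends each of the at most two components to a disk in $M_n\cap\B(p,R)$ with boundary curve in $\B(p,R)-\B(p,R/2)$, and Theorem~\ref{thm2.1} applied to the rescaled blow-up gives the double spiral staircase structure with central column a graph of small $C^1$-norm over an arc in $l\cap\B(p,R)$. Item~2 is then immediate: central columns are the loci of vertical tangent planes on the staircase disks, hence components of the analytic preimage of the equator under the Gauss map, and their $C^1$-convergence to subarcs of $l$ with multiplicity at most two is inherited from item~3.

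The main obstacle is the rescaling-and-extraction argument in item~1 that rules out three singular components accumulating at a common point: one must choose the rescaling factor adaptively so that after rescaling the three resulting $H$-disks are pairwise disjoint, each meets $\B(\ve)$, and each has boundary outside $\B(1)$, simultaneously along a single subsequence. This is precisely the combinatorial heart of the proof of Theorem~1.1 in~\cite{mt13}, which transcribes essentially verbatim to the present setting thanks to the uniform form of Corollary~\ref{cest-cor}.
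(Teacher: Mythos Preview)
Your proposal is correct and follows essentially the same route as the paper: the paper does not give a detailed proof of this claim but simply invokes the observations immediately preceding it (the Colding--Minicozzi picture, Corollary~\ref{cest-cor}, Theorem~\ref{thm1.1}, and the locally positive injectivity radius) and then writes ``arguing like in the proof of Theorem~1.1 in~\cite{mt13}, one obtains the following result.'' Your sketch expands exactly that reference, using the same ingredients---the three-disk estimate, the weak chord-arc property, Theorem~\ref{thm2.1}, and Remark~\ref{remark:spiral}---and you correctly identify the rescaling/extraction step ruling out three disjoint blow-up disks as the combinatorial core borrowed from~\cite{mt13}.
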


Let $l$ be a line in $\cS$. We now need to attach two labels to $l$.
The first one is the following: if $l$
is the $C^1$ limit with multiplicity one, respectively two, of analytic curves
which  are pre-images of the equator via the Gauss map, we say
that $l$ has {\em  multiplicity one}, respectively {\em two}.
The second label is the following: let $C_l(R)$ be the
vertical solid cylinder of radius $R$ with axis $l$.
For a given line $l$ in $ \cS$, fix $R_l>0$ such that
$C_l(2R_l)\cap \cS=l$. Then, for $n$ large,
$\partial C_l(R_l\slash2)\cap {M_{n}}$ contains  either
two or four highly winding spirals nearby the $(x_1,x_2)$-plane.
Since ${M_{n}}$ is embedded,
these spirals are all right-handed
or left-handed for a given $n$. After passing to a subsequence
and using a diagonal argument gives that for a given  $l$ in $\cS$ and $n$ large,
the spirals have the same ``handedness.'' We say that
$l$ is {\em right-handed} if such spirals
are right-handed and that $l$ is {\em left-handed} otherwise.

In the next claims we prove that $\cS$ consists of exactly two vertical
lines.

\begin{claim}\label{twolines}
Let $l_1$ and $l_2$ be two distinct components of $\cS$. Then, if $\, \! $ $l_1$
is right-handed (left-handed), $l_2$ must be left-handed (right-handed).
In particular, $\cS$ consists of at most two lines.
\end{claim}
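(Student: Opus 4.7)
The plan is to prove the handedness dichotomy by contradiction, from which the cardinality bound $|\cS| \leq 2$ follows immediately by pigeonhole. So suppose for contradiction that $l_1, l_2 \in \cS$ are distinct lines both of which are right-handed (the left-handed case being symmetric).

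First, I would use Claim \ref{claim-lines} to fix an $R > 0$ so that both lines intersect $\B(R)$ in long arcs with disjoint cylindrical neighborhoods $C_i := C_{l_i}(R_{l_i}/2) \subset \B(R)$. For $n$ large, $M_n \cap C_i$ is a double spiral staircase in the sense of Remark \ref{remark:spiral}, with central column $C^1$-close to $l_i \cap C_i$, while $M_n$ in the flat region $\B(R) \setminus (C_1 \cup C_2)$ consists of near-horizontal sheets converging to leaves of the horizontal foliation $\cL$. Right-handedness of each $l_i$ means that every spiral curve in $M_n \cap \partial C_i$ gains a positive vertical increment $h_i(n) > 0$ per counterclockwise revolution, with $h_i(n) \to 0$ as $n \to \infty$.

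Next, I would extract the contradiction by tracking how the multi-valued graph sheets near $l_1$ connect through the flat region to those near $l_2$. Parameterizing the sheets of $M_n$ around $l_i$ by consecutive spiral turns, right-handedness at $l_i$ asserts that the horizontal angular position of the $k$-th sheet advances counterclockwise (around $l_i$) as $k$ grows. I would then follow a single near-horizontal sheet $\Sigma_n$ of $M_n$ at height approximately $x_3 = 0$ between the two cylinders, which enters $\partial C_1$ along one spiral turn and $\partial C_2$ along another, and use the fact that the angular advance is forced to match through the flat region. With both lines right-handed, a parity check on how the sheets at $l_1$ pair off with the sheets at $l_2$ through the flat region produces two distinct near-horizontal sheets of $M_n$ that must cross in the flat region, contradicting embeddedness of $M_n$. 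More concretely, I would use the last bullet of Theorem \ref{mainextension} to produce, for any prescribed $j$, pairwise disjoint 3-valued graphs near each $l_i$ whose inner boundaries accumulate on $l_i$; the chord-arc property (Theorem \ref{thm1.1}) together with item 2 of Claim \ref{claim-lines} (identifying the spirals as limits of pre-images of the equator under the Gauss map) then let me follow these graphs rigidly through the flat region. Choosing three suitable such graphs — two consecutive turns at $l_1$ and one at $l_2$ — all meeting a common small ball between $l_1$ and $l_2$, Corollary \ref{cest-cor} gives a uniform curvature bound there, which is incompatible with the divergence $|A_{M_n}| \to \infty$ dictated by $\cS$ lying strictly between $l_1$ and $l_2$ in the sheet ordering.

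Finally, for the cardinality bound: if $\cS$ contained three distinct lines, the pigeonhole principle applied to the two-valued handedness function forces at least two of them to share handedness, contradicting the just-established dichotomy; hence $\cS$ contains at most two lines. The hardest step of this plan is the embeddedness-based parity argument — making rigorous the claim that same-handedness at $l_1$ and $l_2$ forces two distinct multi-valued graph sheets to cross in the flat region. I expect this step to rely essentially on the detailed Colding-Minicozzi description of the limit lamination nearby each line in $\cS$ already developed earlier in this section, together with the one-sided curvature estimates of Theorem \ref{th} and Corollary \ref{cest-cor}.
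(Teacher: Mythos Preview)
Your overall framing by contradiction is fine, but the mechanism you use to derive the contradiction is wrong, and differs essentially from the paper's.

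The paper's argument is \emph{topological}, not a curvature or embeddedness argument: assuming $l_1$ and $l_2$ share handedness, one lifts the segment $\overline{p_1p_2}$ (and a small strip around it) to $M_n$ and, using the same-handed spiral structure at both ends, constructs an unbounded (as $n\to\infty$) collection of pairwise disjoint pairs of simple closed curves in $M_n$, each pair intersecting transversely in a single point. Such a collection forces $\mathrm{genus}(M_n)>k$, contradicting the hypothesis. The key input you are missing is the finite-genus bound on the $M_n$; nothing in your proposal uses it.

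Your proposed contradictions do not go through. First, same handedness at $l_1$ and $l_2$ does \emph{not} force two near-horizontal sheets of $M_n$ to cross in the flat region; an embedded surface can perfectly well spiral with the same handedness around two parallel axes --- the price is genus, not self-intersection. (Compare: the Riemann staircase has opposite-handed columns precisely because it is planar; same-handed columns would produce handles, not intersections.) So the ``parity check forces crossing'' step fails. Second, your invocation of Corollary~\ref{cest-cor} is misplaced: you claim a curvature blow-up ``dictated by $\cS$ lying strictly between $l_1$ and $l_2$'', but by this point $\cS$ is exactly the union of the vertical lines, and the region between $l_1$ and $l_2$ is in the \emph{flat} part of the limit where the $M_n$ have uniformly bounded second fundamental form. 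There is no blow-up there to contradict, so three disks meeting a common small ball between the lines yields nothing.

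In short: replace your embeddedness/curvature mechanism with the genus mechanism --- lift $\overline{p_1p_2}$ to many parallel arcs in $M_n$, close them up through the double spiral staircases at each end (same-handedness is exactly what lets you produce transverse single-point intersections), and contradict $\mathrm{genus}(M_n)\le k$.
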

\begin{proof}
Arguing by contradiction, suppose that $l_1$ and $l_2$ are two
distinct components of $\cS$
having the same handedness.
We will obtain a  contradiction by proving that as
$n$ goes to infinity, the  number of pairwise disjoint pairs of
loops in $M_n$ such that each pair intersects transversely at one point is
greater than the fixed genus bound $k$ for the surfaces ${M_{n}}$.
By using standard topological arguments, the existence of such
loops implies that the genus ${M_{n}}$ is greater than  $k$.

Without loss of generality, suppose that $l_1$ and $l_2$ are both left-handed. Let
$p_i:=l_i\cap\{x_3=0\}$, $i=1,2$  and let $\overline{p_1p_2}$
denote the line segment connecting them. For simplicity, first assume that
$\overline{p_1p_2}\cap [\cS-[l_1\cup l_2]]=\O$.
Then,  as $n$ goes to infinity, the segment
$\overline{p_1p_2}-[C_{l_1}(R_{l_1}\slash4 )\cup C_{l_2}(R_{l_2}\slash 4)]$
lifts near the $(x_1,x_2)$-plane to an increasing number of
arcs $\gamma_i$ in
${M_{n}}-[C_{l_1}(R_{l_1}\slash4 )\cup C_{l_2}(R_{l_2}\slash 4)]$. In fact,
an $\ve$-neighborhood  $\Gamma$ of $\overline{p_1p_2}$ in the $(x_1,x_2)$-plane
lifts to an increasing number of strips $\Gamma_i$ in
${M_{n}}-[C_{l_1}(R_{l_1}\slash4 -\ve)\cup C_{l_2}(R_{l_2}\slash 4-\ve)]$.
Because ${M_{n}}$ is embedded,
the strips $\Gamma_i$ can be ordered by their relative heights. Moreover,
the arcs of the spiralling curves
$M_n \cap \partial C_{l_1}(R_{l_1}\slash2 )$ given by $\Gamma_i\cap \partial C_{l_1}(R_{l_1}\slash2 )$
can be connected, via arcs in $\Gamma_i$,  to the arcs in the spiralling
curves  $M_n \cap \partial  C_{l_2}(R_{l_2}\slash2 ))$ given by $\Gamma_i\cap \partial C_{l_1}(R_{l_2}\slash2 )$.
There are three possibilities to consider.
\begin{enumerate}
\item The lines $l_1$ and $l_2$ have both multiplicity one.
\item One line has multiplicity one and the other one has multiplicity two.
\item Both lines have multiplicity two.
\end{enumerate}

The construction  of the collection of pairwise disjoint  pairs
 of loops when the lines  $l_1$ and $l_2$ have both
 multiplicity one is illustrated
in Figure~\ref{infinitegenus1}.
\begin{figure}
\begin{center}
\includegraphics[width=12cm]{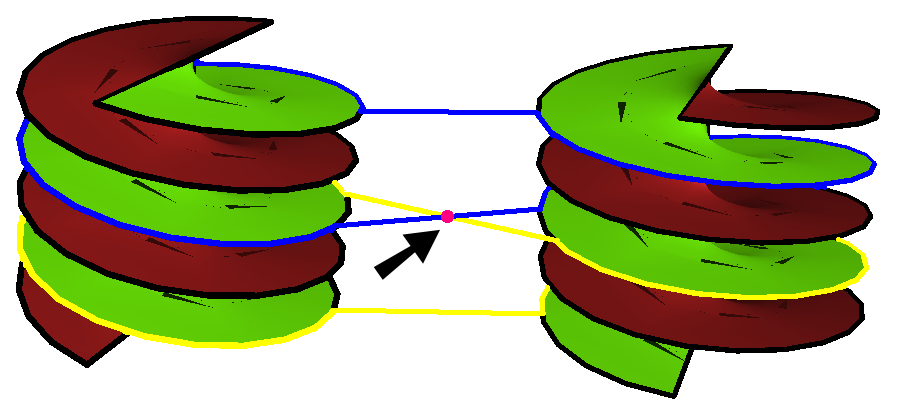}
\caption{The blue curve and the yellow curve intersect exactly at one point.}
\label{infinitegenus1}
\end{center}
\end{figure}
 The construction of the collection of pairwise disjoint  pairs
 of loops in case two is illustrated in Figure~\ref{infinitegenus2}.
 \begin{figure}
\begin{center}
\includegraphics[width=12cm]{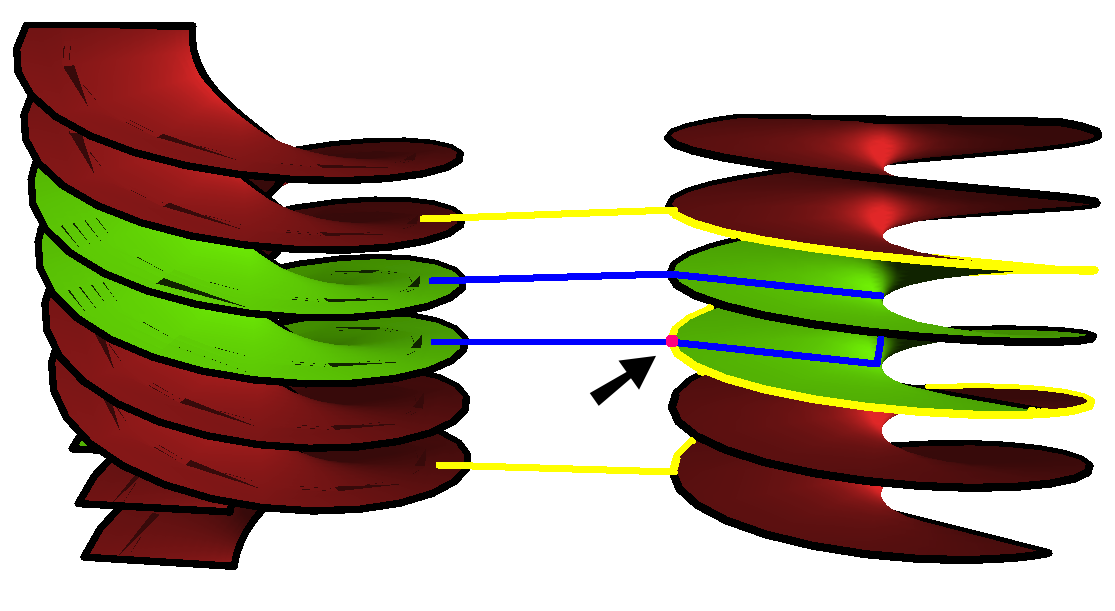}
\caption{The right side of the picture is connected. On the left side of the
picture, the red set is part of a connected set $\cH_1$ and the green set is
part of a connected set $\cH_2$. The sets $\cH_1$ and $\cH_2$ are disjoint.
Therefore, the end points of the blue arc and of the yellow arc can be connected
so that the resulting closed curves intersect
in exactly one point, as shown in the picture.}
\label{infinitegenus2}
\end{center}
\end{figure}
The construction in the third and last case is also
straightforward and it is left to the
reader.

If $\overline{p_1p_2}\cap [\cS-[l_1\cup l_2]]\neq\O$,
the proof can be easily modified by  replacing
$\overline{p_1p_2}$ by
a smooth embedded arc in the $(x_1,x_2)$-plane that is a small
normal graph over $\overline{p_1p_2}$ and only intersects
the singular set at its end points $p_1,p_2$.
\end{proof}

The next claim finally shows that $\cS$ consists of exactly two lines.
Note that by the previous claim, if there are two lines in $\cS$,
then one of these two lines
must be right-handed and the other one must be left-handed.
Recall that
\[
\lim_{n\to \infty} I_{{M_{n}}}(\vec{0})=C.
\]

\begin{claim}\label{twovertical}
The set $\cS$ consists of exactly two vertical lines one of
which is the $x_3$-axis.
\end{claim}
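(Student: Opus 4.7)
The plan is to rule out the possibility that $\cS$ reduces to the $x_3$-axis $l_0$ alone; combined with Claim~\ref{twolines}, this will force $\cS$ to consist of exactly two vertical lines. I argue by contradiction and assume $\cS=l_0$. Since $\lim_{n\to\infty}|A_{M_n}|(\vec 0)=\infty$ and the $\{M_n\}_{n\in\mathbb N}$ have locally positive injectivity radius, Theorem~\ref{cest} forces $\lim_{n\to\infty} H_n=0$; by Claim~\ref{claim-lines}, the surfaces then converge $C^\a$, for any $\a\in(0,1)$, away from $l_0$ to the foliation of $\rth-l_0$ by horizontal punctured planes, and near every point of $l_0$ the pertinent component of $M_n$ sits inside a double spiral staircase.

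Since $\lim_{n\to\infty}H_n=0$ and $\lim_{n\to\infty} I_{M_n}(\vec 0)=C\in(0,\infty)$, the Gauss equation gives $\limsup K_{M_n}\le 0$, and the standard Jacobi field criterion for the exponential map (Proposition~2.12, Chapter~13 of~\cite{doc2}) produces, for $n$ large, simple closed geodesic loops $\alpha_n\subset M_n$ based at $\vec 0$, smooth except possibly at $\vec 0$, of lengths $2I_{M_n}(\vec 0)\to 2C$. I next claim that, after passing to a subsequence, the $\alpha_n$ converge in Hausdorff distance to a vertical segment $\sigma\subset l_0$ of positive length at most $C$ containing $\vec 0$. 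The argument is a direct transcription of the corresponding step in the proof of Claim~\ref{claim:Inj=Inf}: no subsequence of $\alpha_n$ may accumulate at a point off $l_0$, for otherwise the smooth convergence of $M_n$ to horizontal planes near such a point would force the limit set of the $\alpha_n$ to contain an infinite horizontal geodesic ray, contradicting the uniform length bound; moreover Theorem~\ref{thm1.1} prevents $\alpha_n$ from collapsing extrinsically into an arbitrarily small ball around $\vec 0$, so $\sigma$ has positive length.

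Now choose an endpoint $p_1$ of $\sigma$ with $p_1\ne\vec 0$, pick $q_n\in\alpha_n$ where $|x_3|$ attains its maximum on $\alpha_n$ so that $q_n\to p_1$, and observe that $q_n$ is a local extremum of $x_3|_{\alpha_n}$. For any sufficiently small $r>0$, the arc component $\be_n$ of $\alpha_n\cap\B(q_n,\de_1 r)$ through $q_n$ has two endpoints $z_n(1),z_n(2)\in\partial\B(q_n,\de_1 r)$ with $|z_n(1)-z_n(2)|\to 0$ but $\mbox{\rm dist}_{M_n}(z_n(1),z_n(2))\ge\de_1 r$. By Claim~\ref{claim-lines}, $|A_{M_n}|$ is unbounded in every fixed extrinsic neighborhood of $q_n$, so for some $r_0(n)\to 0$ one has $\sup_{B_{M_n}(z_n(1),r_0(n))}|A_{M_n}|>1/r_0(n)$. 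Applying the chord-arc Theorem~\ref{main2} with the analog of $R$ chosen as in the proof of Claim~\ref{claim:Inj=Inf} yields the inequality $\de_1\ve/(6a)<|z_n(1)-z_n(2)|+r_0(n)$, whose right-hand side tends to zero as $n\to\infty$; this contradiction completes the argument.

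The principal obstacle is the Hausdorff convergence of the $\alpha_n$ to a vertical segment in $l_0$: here the contradiction hypothesis $\cS=l_0$ is used crucially, since a second singular line would open up further geometric possibilities for the asymptotic shape of the loops and block the collapse onto a single vertical axis. The remaining steps, being local Jacobi field and chord-arc arguments, follow the corresponding portions of the proof of Claim~\ref{claim:Inj=Inf} essentially verbatim.
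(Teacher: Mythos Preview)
Your proposal is correct and follows essentially the same route as the paper's own proof: both assume by contradiction that $\cS$ reduces to the $x_3$-axis, produce the geodesic loops $\alpha_n$ of length $2C_n\to 2C$ from the Gauss equation and the exponential-map criterion, and then transplant the chord-arc contradiction from the proof of Claim~\ref{claim:Inj=Inf} (with $T$ replaced by $C$). The paper in fact contents itself with a single sentence referring back to Claim~\ref{claim:Inj=Inf}, while you have unpacked that reference in detail; the content is the same.
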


\begin{proof}

We have already shown that there are at most
two vertical lines in $\cS$ and that
 the $x_3$-axis is in $\cS$.
Since $\lim_{n\to\infty} H_n=0$, then,
 for $n$ large, the Gauss equation implies that
 the $\limsup K_{M_n}$  of the Gaussian curvature functions
 of the surfaces $M_n$ is non-positive.
 Since  $\lim_{n\to \infty} [I_{{M_{n}}}(\vec{0})=C_n]=C$,
 classical results on Jacobi fields along geodesics
in such surfaces imply that for $n$ large the exponential map of $M_n$ at the origin
 on the  closed disk in $T_{\vec 0}M_n$ of radius $C_n$
is a local diffeomorphism that is injective on the
open disk but not injective along its boundary
circle of radius $C_n$. Hence,
there exists a sequence of simple closed geodesic loops
$\a_n\subset M_n$ based at the origin and
of lengths $2C_n$ converging to $2C$ that are smooth everywhere
except possibly at the origin.
 Arguing by contradiction, suppose that $\cS$ is the $x_3$-axis.
The arguments to rule out this picture are exactly the same ones used in the proof
of Claim~\ref{claim:Inj=Inf} by taking $T=C$.
This implies that the number of lines in $\cS$ must be two.
\end{proof}

From now on, $l_1$ will denote the $x_3$-axis, $l_2$ will denote the
other component in $\cS$ and $p_2=l_2\cap \{x_3=0\}$.

\begin{claim}\label{multi}
If $l_1$ has multiplicity one, respectively two, then so does $l_2$.
\end{claim}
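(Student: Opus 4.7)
The plan is to argue by contradiction using the same lifting-and-loop-counting strategy as in the proof of Claim~\ref{twolines}. Suppose $l_1$ has multiplicity one while $l_2$ has multiplicity two; the other case is symmetric. Then for $n$ large, $M_n\cap \partial C_{l_1}(R_{l_1}/2)$ consists of exactly two highly winding spirals, whereas $M_n\cap \partial C_{l_2}(R_{l_2}/2)$ consists of exactly four such spirals. I would let $\sigma$ be the segment $\overline{p_1p_2}$, or, if it meets $\cS-[l_1\cup l_2]$ away from its endpoints, a $C^1$-small normal perturbation of it in $\{x_3=0\}$ meeting $\cS$ only at $p_1,p_2$. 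Set $\Gamma$ to be an $\ve$-tubular neighborhood of $\sigma$ in $\{x_3=0\}$. Using the smooth off-$\cS$ convergence of $M_n$ to the horizontal foliation $\cL$ and the double spiral staircase description in Claim~\ref{claim-lines}, the portion of $\Gamma$ outside $C_{l_1}(R_{l_1}/4-\ve)\cup C_{l_2}(R_{l_2}/4-\ve)$ lifts, for $n$ large, to $K_n$ pairwise disjoint near-horizontal graphical strips $\Gamma_1^n,\dots,\Gamma_{K_n}^n\subset M_n$ with $K_n\to\infty$, which by embeddedness can be linearly ordered by height along the middle of $\sigma$.

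Each strip $\Gamma_i^n$ contributes exactly one boundary arc to the spirals on each of the two cylindrical boundaries. On the $l_1$-side the $K_n$ endpoints interlace along the two spirals there, while on the $l_2$-side they interlace along four spirals. The key step is to exploit this $2\leftrightarrow 4$ mismatch to produce, for each block of a uniformly bounded number of consecutive strips in the height order, a pair of simple closed curves $\a_j^n,\b_j^n\subset M_n$ that intersect transversally in exactly one point and such that the pairs are pairwise disjoint for distinct~$j$. Each such pair is assembled by closing up appropriate strip endpoints with short arcs lying inside the spirals on the two cylinders; this is the same mechanism used, and illustrated in Figures~\ref{infinitegenus1} and~\ref{infinitegenus2}, in the proof of Claim~\ref{twolines}, but with the handedness mismatch there replaced by the multiplicity mismatch here. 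Since $K_n\to\infty$, this yields arbitrarily many homologically independent handles in $M_n$, forcing $\mathrm{genus}(M_n)>k$ for all $n$ sufficiently large and contradicting the assumption that each $M_n$ has finite genus at most~$k$.

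I expect the main obstacle to be the combinatorial bookkeeping in the step extracting the pairs $(\a_j^n,\b_j^n)$: verifying that the $2$-versus-$4$ spiral asymmetry genuinely allows one to select strip configurations whose closures intersect transversely in exactly one point and remain disjoint across different height-blocks, rather than accidentally bounding in $M_n$ or producing loops that meet in more than one point. Once this bookkeeping is carried out in the three subcases according to which arcs lie on matching or non-matching spirals on the $l_2$-side, exactly as in the proof of Claim~\ref{twolines}, the contradiction follows; the handedness information supplied by Claim~\ref{twolines} (opposite handedness of $l_1$ and $l_2$) is compatible with, and indeed used to orient, the spiral arcs along which the closing up takes place.
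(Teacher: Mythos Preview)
Your proposal is correct and follows essentially the same approach as the paper: argue by contradiction assuming different multiplicities, lift the strip between the two columns, and use the $2$-versus-$4$ spiral asymmetry together with the opposite handedness (already known from Claim~\ref{twolines}) to build arbitrarily many pairwise disjoint pairs of simple closed curves meeting transversally in a single point, contradicting the genus bound. Two small corrections: in Claim~\ref{twolines} the contradiction hypothesis was \emph{same} handedness, not a ``handedness mismatch''; and your caveat about perturbing $\sigma$ off $\cS-[l_1\cup l_2]$ is unnecessary here, since by Claim~\ref{twovertical} one already knows $\cS=l_1\cup l_2$ (the relevant picture in the paper is Figure~\ref{infinitegenus3}, the opposite-handedness variant of Figure~\ref{infinitegenus2}).
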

\begin{proof}
By Claim~\ref{twolines}, one vertical line must be right-handed and the
other must be left-handed. Suppose that one line has multiplicity one and the other
has multiplicity two. Like in the proof of Claim~\ref{twolines},
we will obtain a  contradiction by proving that as
$n$ goes to infinity, the  number of pairwise disjoint pairs of
loops such that each pair intersects transversely at one point is
greater than the fixed genus bound $k$ for the surfaces ${M_{n}}$.
The construction of such pairs of loops  is illustrated
in Figure~\ref{infinitegenus3}.
\begin{figure}
\begin{center}
\includegraphics[width=12cm]{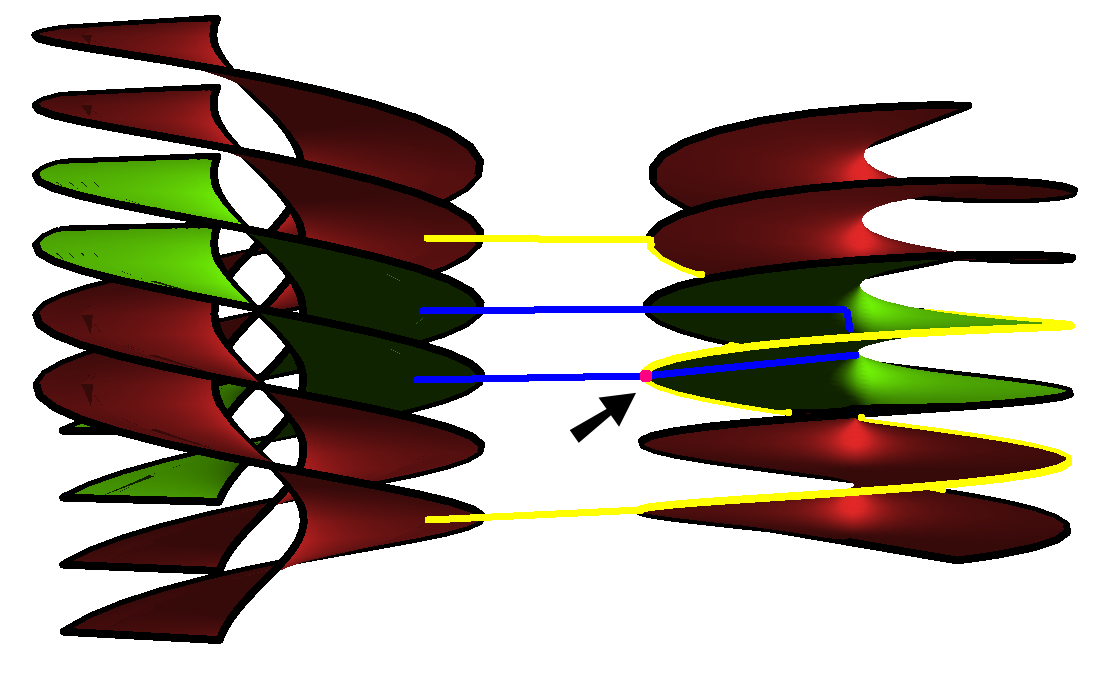}
\caption{The right side of the picture is connected. On the left side of the
picture, the red set is part of a connected set $\cH_1$ and the green set is
part of a connected set $\cH_2$. The sets $\cH_1$ and $\cH_2$ are disjoint.
Therefore, the end points of the blue arc and of the yellow arc can be connected
so that the resulting closed curves intersect in exactly one point, as shown in the
picture. One of the differences with Figure~\ref{infinitegenus2} is in the handedness of the lines.}
\label{infinitegenus3}
\end{center}
\end{figure}
\end{proof}

Assume now that $l_1$ has multiplicity two. Let $d>0$
denote the distance between $l_1$ and $l_2$.
Recall that by Claims~\ref{claim-lines} and~\ref{multi}, if
$l_1$ has multiplicity two so does $l_2$ and that, for $n$ large,
the subset $\cD_1(n)$ of $M_n\cap \B(\frac d2)$ such that
$\cD_1(n)\cap \B(\frac d4)\neq \O$ consists of two disks $A_1(n)$
and $B_1(n)$ and the subset $\cD_2(n)$ of $M_n\cap \B(p_2,\frac d2)$
such that $\cD_2(n)\cap \B(p_2,\frac d4)\neq \O$ consists of
two disks $A_2(n)$ and $B_2(n)$.

\begin{claim} \label{claim5.5}
Let $R>d$ and suppose that $l_1$ has multiplicity two. Then,
for $n$ large, after possibly relabeling the disks, the
following holds. The components  $\Delta_n$ of $M_n\cap \B(R)$
such that $\Delta_n \cap \B(\frac d4)\neq \O $ consist of two
distinct planar domains   $\Delta_1(n)$ and $\Delta_2(n)$
such that $A_1(n)\cup A_2(n)\subset \Delta_1(n)$ and
$B_1(n)\cup B_2(n)\subset \Delta_2(n)$.
 \end{claim}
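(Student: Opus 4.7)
The plan is to exploit the $C^\alpha$ convergence of $\{M_n\}$ to the horizontal lamination $\cL$ of $\rth-\cS$ in order to describe $M_n\cap\B(R)$ away from $l_1\cup l_2$ as a stack of near-horizontal planar sheets, and then to match these sheets with the double spiral staircase structure near each line. More precisely, for a fixed small $\ve>0$ (in particular $\ve\ll d/4$), consider the open set $U_R=\B(R)-[C_{l_1}(\ve)\cup C_{l_2}(\ve)]$. By the bounded curvature of $M_n$ in $U_R$ and the convergence to $\cL$, for $n$ large every component of $M_n\cap U_R$ of diameter comparable to $R$ is a smooth graph of small gradient over a horizontal planar annulus with two boundary circles on $\partial\B(R)$ and two on $\partial C_{l_j}(\ve)$, and these components are stacked vertically and ordered by height.

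Second, I would use the double spiral staircase description in Claim~\ref{claim-lines} (item~3) and Remark~\ref{remark:spiral} for the two disks $A_1(n),B_1(n)$ near $l_1$ and $A_2(n),B_2(n)$ near $l_2$. Each $\partial A_j(n)\cap \partial C_{l_j}(\ve)$ and $\partial B_j(n)\cap \partial C_{l_j}(\ve)$ consists of highly winding spiral arcs, and by the multiplicity two hypothesis the spirals of $A_j(n)$ and $B_j(n)$ interleave along the common cylindrical boundary. A planar sheet in $U_R$ must have each of its two boundary components on $\partial C_{l_j}(\ve)$ matching a single arc of either the $A_j$-spiral or the $B_j$-spiral; embeddedness then implies that each sheet connects one spiral level of $A_1(n)$ or $B_1(n)$ to one spiral level of $A_2(n)$ or $B_2(n)$.

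The key step is to show that this pairing is consistent, i.e. every sheet touching $A_1(n)$ meets the \emph{same} member of $\{A_2(n),B_2(n)\}$. I would argue this by continuity along the central columns: the disks $A_1(n),B_1(n)$ are interleaved vertically and each forms a connected double spiral staircase, so the sheets issuing from $A_1(n)$ form a single connected family parametrized monotonically by height near $l_1$. If two consecutive $A_1$-sheets were to connect to different disks near $l_2$, the interposed $B_1$-sheet would be forced by the ordering of heights to cross one of them, violating embeddedness. After relabeling, all sheets touching $A_1(n)$ connect to $A_2(n)$ and all sheets touching $B_1(n)$ connect to $B_2(n)$; this gives the two candidate components $\Delta_1(n)\supset A_1(n)\cup A_2(n)$ and $\Delta_2(n)\supset B_1(n)\cup B_2(n)$, and it also prevents the existence of a third component meeting $\B(d/4)$ by Corollary~\ref{cest-cor} (applied exactly as in the proof of Theorem~\ref{geometry1}).

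Finally, planarity of $\Delta_1(n),\Delta_2(n)$ follows because each is obtained by gluing two topological disks $A_j(n),B_j(n)$ (near $l_1$ respectively $l_2$) to a finite collection of connecting annular sheets along boundary spiral arcs; this is a genus zero surface with boundary. I expect the main obstacle to be the consistency of the pairing in the third paragraph, which requires carefully tracking the vertical ordering of the spiral levels on both cylinders using the matching handedness forced by Claims~\ref{twolines} and~\ref{multi}, and invoking embeddedness of $M_n$ to rule out any ``crossing'' of sheets.
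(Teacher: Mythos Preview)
Your strategy is close in spirit to the paper's, but there are two points where the argument is either weaker or genuinely incomplete.

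\textbf{Decomposition and the pairing.} Instead of removing two solid cylinders, the paper cuts with the single vertical plane $\Pi$ that perpendicularly bisects the segment $\vec 0 p_2$. On each side of $\Pi$ the spiral-staircase description from Claim~\ref{claim-lines} gives exactly two components ($\Omega_{11},\Omega_{12}$ on the $l_1$-side and $\Omega_{21},\Omega_{22}$ on the $l_2$-side), and the problem reduces to matching the height-ordered arcs $S(n)=\Delta_n\cap\Pi$. Your description of the pieces in $U_R$ as graphs over a horizontal \emph{annulus} is not quite right (a horizontal slice of $U_R$ is a twice-punctured disk, and the components of $M_n\cap U_R$ are multi-valued over it). More importantly, for the consistency of the pairing the paper does not rely only on embeddedness: it first shows, using Corollary~4.9 of~\cite{mt13}, that the mean curvature vectors of $A_1(n),B_1(n)$ (and of $A_2(n),B_2(n)$) point \emph{out of} the region $W_j(n)$ they bound. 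This orientation information, combined with the height ordering of the arcs on $\Pi$, forces the dichotomy $\alpha_1(n)=\alpha_2(n),\ \beta_1(n)=\beta_2(n)$ or $\alpha_1(n)=\beta_2(n),\ \beta_1(n)=\alpha_2(n)$. Your embeddedness argument implicitly assumes the interleaving $A,B,A,B,\dots$ on each cylinder, which is exactly what the mean-curvature observation is used to pin down; you should either prove that interleaving directly from the double-spiral-staircase geometry or invoke the mean-curvature argument.

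\textbf{Planarity.} This is a genuine gap. Your sentence ``gluing two topological disks \dots\ to a finite collection of connecting annular sheets along boundary spiral arcs; this is a genus zero surface'' is not a valid inference: gluing two disks with $m$ strips can produce any genus up to roughly $m/2$, depending on the combinatorics of the attaching arcs, so the assertion needs justification. The paper proves planarity by a completely different mechanism: the structure of $\Delta_j(n)$ is ``almost periodic'' in the vertical direction, so if $\Delta_j(n)$ carried a single pair of loops meeting transversally in one point, one could produce an arbitrarily large number of pairwise disjoint such pairs as $n\to\infty$, contradicting the uniform genus bound $k$ on $M_n$. You should replace your topological gluing sentence with this periodicity/genus-bound argument (or, alternatively, carry out a careful analysis of the attaching pattern using the opposite handedness of $l_1$ and $l_2$ from Claim~\ref{twolines} to show the gluing is planar).
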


 \begin{proof}
Let $\Pi$ denote the vertical plane perpendicular to the
segment $\vec 0 p_2$ connecting $\vec 0$ and $p_2$ and
containing its midpoint.  As $n$ goes to infinity,
$\Pi\cap \Delta_n$ consists of an increasing collection of
arcs $S(n)$ that are becoming horizontal arcs. Since $\Delta_n$
is embedded, these planar arcs can be ordered by
their relative heights over the midpoint of $\vec 0 p_2$.
Note that $\Delta_n-\Pi$ contains four disconnected components
$\Omega_{ij}(n)$, $i,j=1,2$, and the following holds for
$i=1,2$: $A_i(n)\subset \Omega_{i1}(n)$ and
$B_i(n)\subset \Omega_{i2}(n)$. For $i=1,2$ let $\alpha_i(n)$
denote the arcs in $S(n)$ that are contained in the boundary
of $\Omega_{i1}(n)$ and  let $\beta_i(n)$ denote the arcs in
$S(n)$ that are contained in the boundary of $\Omega_{i2}(n)$.
Recall that $A_1(n)$ and $B_1(n)$, respectively  $A_2(n)$ and $B_2(n)$,
separate $\B(\frac d4)$, respectively $\B(p_2,\frac d4)$,
into three components and the mean curvature vector of $M_n$
points outside of the component $W_1(n)$, respectively $W_2(n)$,
with boundary $A_1(n)\cup B_1(n)$, respectively $A_2(n)\cup B_2(n)$.
This is because otherwise applying Corollary~4.9 in~\cite{mt13}
would give curvature estimates in a neighborhood of $\vec 0$,
respectively $p_2$, and this would contradict the fact that
$\vec 0$, respectively $p_2$, is in $\cS$. Using this
observation and the previous discussion gives that
either $\alpha_1(n)=\alpha_2(n)$ and $\beta_1(n)=\beta_2(n)$,
or $\alpha_1(n)=\beta_2(n)$ and $\beta_1(n)=\alpha_2(n)$.
After possibly relabeling, either case implies that $\Delta_n$ is disconnected.

It remains to prove that each connected component of $\Delta_n$
has genus zero. This follows from the ``almost periodicity'' of
the previous description. If there were a pair of loops
intersecting at exactly one point then, as $n$ goes to infinity,
there would be an increasing number of such pairs in $M_n$,
contradicting the fact that the genus of $M_n$ is bounded from above by $k$.
 \end{proof}

\begin{remark}\label{moneortwo} {\em
If instead $l_1$ has multiplicity one then, by the same arguments,
the components  $\Delta_n$ of $M_n\cap \B(R)$
such that $\Delta_n \cap \B(\frac d4)\neq \O $ consist of a unique
 planar domain for $n$ large. Moreover, it is easy to see that if $l_1$ has
 multiplicity two and $\Delta_2(n)$ denotes the connected component of $\Delta_n$
 that intersect $\B(\frac d4)$ and does not contain the origin then, after possibly
 reindexing the subsequence, $\Delta_2(n)\cap \B(\frac Rn)\neq \O$.
}
 \end{remark}

For the time being, let us assume that the distance between $l_1$ and $l_2$ is $C$.
We now deal with the construction of closed curves
with non-zero flux. Note that this
construction  is analogous to the one
described in Figures~4, 5 and 6 in~\cite{mpr3}, which in turn was a
modification of a related argument in~\cite{mpr1}; the closed curves constructed
by the methods in ~\cite{mpr1,mpr3} are called {\em connection loops}.

Recall that if $\gamma$ is a 1-cycle in an $H$-surface $M$, then the %
{\em flux} of
$\gamma$ is
\[
F(\g)=\int_{\gamma}(H\gamma+\xi)\times \dot{\gamma},
\]
 where $\xi$
is the unit normal to $M$ along $\gamma$. The flux of a 1-cycle in $M$ is a homological invariant.

 Given $\ve>0$ sufficiently small,
as $n$ goes to infinity, the line segment
$\vec0p_2-[\B(\ve)\cup \B(p_2,\ve)]$ lifts to an
increasing number of arcs
$\gamma_i(n,\ve)$ in ${M_{n}}-[\B(\ve)\cup \B(p_2,\ve)]$
that, as $n$ goes to infinity,
converge $C^1$ to the line segment $\vec0 p_2-[\vec0 p_2\cap [\B(\ve)\cup\B(p_2,\ve)]]$.
Because ${M_{n}}$ is embedded, the lifts $\gamma_i(n,\ve)$
can be ordered by their relative heights and the signs of the inner
product between the unit normal vector to ${M_{n}}$ along
$\gamma_i(n,\ve)$ and $(0,0,1)$ are alternating.

 Let $\ve_n$ be a sequence of positive numbers with $\lim_{n\to\infty}\ve_n=0$
 such there exists a sequence of
two consecutive lifts $\gamma_1(n,\ve_n)$ and $\gamma_2(n,\ve_n)$ of
$\vec0p_2-[\B(\ve_n)\cup \B(p_2,\ve_n)]$ and the following holds: the end points
of such lifts are contained in $\B(2\ve_n)$ and $\B(p_2,2\ve_n)$ and the lifts
converge to the line segment $\vec 0p_2$ away from $\vec 0$ and $p_2$ as $n$ goes to infinity.
Let $\alpha_1(n,\ve_n)$
be an arc in $\B(2\ve_n)\cap {M_{n}}$
connecting the endpoints of $\gamma_1(n,\ve_n)$ and $\gamma_2(n,\ve_n)$
in $\B(\ve_n)$ and let $\alpha_2(n,\ve_n)$
be an arc in $\B(p_2,2\ve_n)\cap {M_{n}}$
connecting the endpoints of $\gamma_1(n,\ve_n)$ and $\gamma_2(n,\ve_n)$ in
$\B(p_2,2\ve_n)$ such that the loop
\[
\gamma_1(n,\ve_n)\cup\alpha_1(n,\ve_n)\cup \gamma_2(n,\ve_n)\cup\alpha_2(n,\ve_n)
\]
is smooth; note that  since the sequence  $\{M_n\}_{n\in\mathbb N}$ has locally
positive injectivity radius in $\rth$, by using Theorem~\ref{thm1.1}, as $n$ goes
to infinity, the sum of the lengths of the arcs $\alpha_1(n,\ve_n)$ and $\alpha_2(n,\ve_n)$,
can be assumed to approach zero as well.
Let $\Gamma_{n}$ be a unit speed parametrization
of such a loop and, for $p\in \Gamma_{n}$ let
$ N_{n}(p)$ denote the normal to ${M_{n}}$ at $p$.

Recall that as $n$ goes to
infinity, the mean curvature of $M_n$ is going to zero therefore, since the length
of $\Gamma_{n}$  is bounded from above independently of $n$,
the term in the flux formula  involving the mean curvature is going to zero.
In other words,
\[
F(\Gamma_{n})=\int_{\Gamma_{(n,\ve)}} N_{n}(p)\times
\dot \Gamma_{n)}(p)+f(n), \quad {\rm where } \lim_{n\to \infty}f(n)=0.
\]
As $n$ goes to infinity,
for any $p\in \gamma_i(n,\ve_n)$ the vectors
$N_{n}(p)\times \dot \Gamma_{n}(p)$ are
converging to the same unit vector perpendicular to $\vec0p_2$, the
lengths of $\a_1(n,\ve_n)\cup\a_2(n,\ve_n)$
are going to zero, and the lengths of
$\gamma_1(n,\ve_n)\cup\gamma_2(n,\ve_n)$ are converging to $2C$.
Therefore, after possibly changing their orientation, the
curves $\Gamma_{n}$ converge to
the line segment $\vec 0p_2$, have lengths converging to
$2C$ and fluxes converging to $(0,2C,0)$. This construction of curves with non-zero
flux finishes the proof of item 3 of Theorem~\ref{geometry2}, assuming that the distance
between the lines $l_1$ and $l_2$ is $C$.

We can now prove that the distance $d$  between the lines $l_1$ and $l_2$ is $C$.
Arguing by contradiction, suppose that $d<C$ or $d>C$. If $d<C$ then, by the previous
arguments, there exists a sequence of loops $\Gamma_{n}$ containing the origin with the
norms of their fluxes bounded from below by $d$. Since the flux of a 1-cycle is a
homological invariant, this implies that such curves are homologically non-trivial.
Moreover the lengths of $\Gamma_{n}$ are converging to $2d<2C$. Therefore, there
exists $\ve>0$ such that for $n$ sufficiently large, $\Gamma_{n}\subset B_{M_n}(\vec 0, C-\ve)$. However, since
$\lim_{n\to \infty} [I_{{M_{n}}}(\vec{0})=C_n]=C$, for $n$ sufficiently
large $B_{M_n}(\vec 0, C-\ve)$ is a disk. This implies that for $n$ sufficiently
large, $\Gamma_{n}$ is homologically trivial which is a contradiction.

Suppose $d>C$.  Since
$\lim_{n\to \infty} [I_{M_n}(\vec 0)=C_n]=C\in (0,\infty)$,
there exists a sequence of simple closed geodesic loops
$\a_n\subset M_n$ based at $\vec 0$ and
of lengths $2C_n$ converging to $2C$ that are smooth everywhere
except possibly at $\vec 0$; see the proof of Claim~\ref{claim:Inj=Inf}. In fact,
arguing exactly as in the proof  of Claim~\ref{claim:Inj=Inf} gives that the limit
set of $\alpha_n$ must contain a point in $\cS-l_1$. Note that $\alpha_n\subset \ov \B(C_n)$.
Since $d>C$, there exists $\ve>0$ such that, for $n$ sufficiently large, $\alpha_n\subset \B(d-\ve)$.
In particular, for $n$ sufficiently large, $\alpha_n$ is at distance at least $\ve$
from the line $l_2=\cS-l_1$ and thus the limit set of $\alpha_n$ does not contain a
point in $\cS-l_1$. This contradiction proves that the distance between $l_1$ and $l_2$ is equal to $C$.

Finally, given $R>C$ let  $\Delta(n)$ be a connected component of $M_n\cap \B(R)$
that intersects $\B(\frac R4)$. We want to show that for $n$ sufficiently large, given
two points in $\Delta(n)$, their distance in $M_n$ is less than $3R$. Without loss of generality,
let us assume that $\Delta(n)$ is the connected component containing the origin.
Then, it suffices to show that given a point in $\Delta(n)$, its distance to the
origin in $M_n$ is less than $\frac32R$. Arguing by contradiction, assume there
exists $R>C$ and points $p(n)\in \Delta(n)$ at distance greater than or equal to
$\frac 32R$ to the origin. After going to a subsequence, let $p=\lim_{n\to\infty} p(n)$.
Recall that by Theorem~\ref{thm1.1} and Claim~\ref{claim5.5}, since the sequence
$\{M_n\}_{n\in\mathbb N}$ has locally positive injectivity radius in $\rth$, there
exists $\ve>0$ such that for $n$ sufficiently large, the intersection $\Delta(n)\cap\B(\ve)$
is a  disk that is contained in $B_{M_n}(R)$. Therefore, for $n$ sufficiently
large, $p_n\notin\B(\ve)$ which implies that $p\notin\B(\frac \ve2)$.

Let $\gamma$ be the horizontal line segment connecting $p$ to a point $q$ in the
$x_3$-axis and let $\alpha$ be the line segment in the $x_3$-axis connecting $q$
to the origin. If $\gamma\cap l_2\neq \O$,  let $z$ denote such point of intersection.
Note that the length of $\gamma\cup\alpha$ is less than $\sqrt 2R<\frac 32R$. By the
arguments used in the proof of this theorem, it is clear that there exists a sequence
of curves $\gamma(n)$ in $M_n$ connecting the origin to the point $p(n)$ that converges
to $\gamma\cup\alpha$ away from the points $q$ and $z$. Moreover, the lengths of this
curve converge to the length of $\gamma\cup\alpha$. Therefore, for $n$ sufficiently large,
the length of $\gamma(n)$ is less than $\frac 32R$ and so the distance from $p(n)$ to
the origin in $M_n$ is less than $\frac32R$. This contradiction proves that  for $n$
sufficiently large, given two points in $\Delta(n)$, their distance in $M_n$ is less than $3R$.

The geometric description given in item 2 of Theorem~\ref{geometry2} follows easily
from the arguments used in its proof. This finishes the proof of the theorem.

\begin{remark} \label{rem:6.6} {\em Suppose that for
some $\ve>0$, $\{M_n\}_{n\in \mathbb N}$  is a
sequence of compact $H_n$-surfaces in $\rth$ with finite genus
at most $k$,
 $\vec{0}\in {M_{n}}$,
$d_{{M_{n}}}(\vec{0},\partial {M(n)})\to \infty$ and that
$ I_{{M(n})}(x)\geq \ve$ for any $x\in {M(n)}$ with
$d_{M(n)}(x,\partial M(n))>1$. Then  Corollary~3.2 in~\cite{mt8}
shows that after replacing by a subsequence,
the components $M_n$ of $M(n)\cap\B(n)$ containing the origin
satisfy the conditions of one of the
Theorem~\ref{H-lam-thm}, \ref{geometry1} or \ref{geometry2}.
}
\end{remark}

\begin{definition}{\rm A point of {\em almost-minimal injectivity radius}\label{amininj}
of a compact surface $M$ surface  with boundary is a point  $p \in M$ where the function
$\frac{d_{M}(p,\partial M)}{I_{M}(p )}$  has
its maximal value.}
\end{definition}

As a consequence of  Remark~\ref{rem:6.6}, we have
the following proposition that is related to Theorem~1.1
in~\cite{mpr14}, which was proved under the hypothesis that $H=0$.

\begin{proposition} \label{cor:5.7}
Let $M(n)$ be a sequence of compact $H_n$-surfaces with boundary embedded in $\rth$ with
finite genus at most $k$ together with points $p_n\in M_n$ satisfying
$$\lim_{n\to\infty}\frac{d_{M(n)}(p_n,\partial M(n))}{I_{M(n)}(p_n)} = \infty.$$

Given points $q_n\in M(n)$  of almost-minimal injectivity
radius, there exist
 positive numbers $R_n$, $\lim_{n\to\infty}R_n= \infty$, satisfying:
\ben
\item  The component $M_n$ of $[\frac{1}{I_{M(n)}(q_n)} (M(n)-q_n)]\cap \B(R_n)$
containing $\vec{0}$ has its boundary in $ \partial \B(R_n)$ and genus at most $k$.
\item The sequence  $\{M_n\}_{n\in \N}$ has uniformly
positive injectivity radius in $\rth$ and
$I_{M_n}(\vec{0})=1$.\een
Then after choosing a subsequence and then
translating the surfaces $M_n$ by vectors
of length at most 1, the sequence $\{M_n\}_{n\in \N}$
satisfies the hypotheses of Theorems~\ref{geometry2}
with $C=1$ or  the sequence  $\{M_n\}_{n\in \N}$ satisfies
the hypotheses of Theorem~\ref{H-lam-thm}.
\end{proposition}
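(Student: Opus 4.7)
The plan is a blow-up rescaling centered at the almost-minimal injectivity radius points. Set $\lambda_n:=1/I_{M(n)}(q_n)$ and let $\tilde M(n):=\lambda_n(M(n)-q_n)$. Since the ratio $d_M(\cdot,\partial M)/I_M(\cdot)$ is scale invariant, the origin of $\tilde M(n)$ is again a point of almost-minimal injectivity radius, $I_{\tilde M(n)}(\vec 0)=1$, and
$$
D_n \;:=\; d_{\tilde M(n)}(\vec 0,\partial\tilde M(n)) \;=\; \lambda_n\, d_{M(n)}(q_n,\partial M(n))
$$
diverges, by the hypothesis on the $p_n$ combined with the almost-minimality of $q_n$. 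I would pick $R_n\to\infty$ with $R_n\le\tfrac12 D_n$ and $R_n\ge n$, and let $M_n$ be the component of $\tilde M(n)\cap\B(R_n)$ through $\vec 0$. Then $\partial M_n\subset\partial\B(R_n)$ by construction, $M_n$ inherits genus at most $k$, and since $\partial M_n\neq\emptyset$ the component is not closed; this settles item~(1).

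For item~(2), the key is that almost-minimality after rescaling yields, for every $\tilde x\in\tilde M(n)$,
$$
I_{\tilde M(n)}(\tilde x) \;\ge\; \frac{d_{\tilde M(n)}(\tilde x,\partial\tilde M(n))}{D_n} \;\ge\; \frac{D_n - d_{\tilde M(n)}(\vec 0,\tilde x)}{D_n},
$$
by the intrinsic triangle inequality. To turn this into a uniform lower bound on an extrinsic compact set $K\subset\rth$, one must bound the intrinsic distance to $\vec 0$ by the extrinsic distance. Starting from $I_{\tilde M(n)}(\vec 0)=1$ one has an embedded exponential-image disk of radius $1$ about $\vec 0$; inside it the weak chord-arc property (Theorem~\ref{thm1.1}), and if need be the sharper chord-arc estimate (Theorem~\ref{main2}), propagates the injectivity radius lower bound outward along a chain of points, giving that the $\vec 0$-component of $\tilde M(n)\cap K$ has intrinsic diameter controlled only by $K$ for $n$ large. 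Combined with the displayed inequality, this forces $I_{M_n}\ge 1/2$ on any fixed compact set and establishes item~(2). With items~(1) and~(2) in hand, a subsequence either (a) has locally bounded norm of the second fundamental form in $\rth$---in which case every hypothesis of Theorem~\ref{H-lam-thm} is met and no translation is needed---or (b) admits $y_n\in\overline B_{M_n}(\vec 0,1)\subset\overline\B(1)$ with $|A_{M_n}|(y_n)\to\infty$. In case (b), translation by $-y_n$ (a vector of length at most $1$) puts the origin at the blow-up point, so $|A_{M_n}|(\vec 0)\to\infty$; the uniform injectivity bound from item~(2), together with the upper bound on $I_{M_n}(\vec 0)$ coming from comparison with nearby points of controlled curvature, forces $I_{M_n}(\vec 0)\to C\in(0,\infty)$ along a further subsequence, and a final rescaling by the bounded factor $C$ normalizes $C=1$, so Theorem~\ref{geometry2} applies.

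The principal obstacle is the extrinsic-to-intrinsic comparison needed in item~(2). The almost-minimality supplies only the intrinsic bound $I_{\tilde M(n)}(\tilde x)\ge (D_n-d_{\tilde M(n)}(\vec 0,\tilde x))/D_n$, and an embedded $H$-surface can wind sharply between two extrinsically close points (as a helicoid does), so one cannot simply replace intrinsic by extrinsic distance. The chord-arc theorems of the preliminaries must therefore be applied iteratively to local disk components built from exponential maps at points where the injectivity radius has already been shown to be bounded below, chaining outward from $\vec 0$; once this intrinsic/extrinsic comparability is in place, the dichotomy of step~(b) versus step~(a) and the final normalization $C=1$ are routine subsequence extractions.
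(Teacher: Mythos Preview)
Your blow-up rescaling at an almost-minimal injectivity radius point and the intrinsic inequality
\[
I_{\tilde M(n)}(\tilde x)\;\ge\;\frac{D_n-d_{\tilde M(n)}(\vec 0,\tilde x)}{D_n}
\]
are exactly what the paper does. The paper's proof, however, does not carry out the intrinsic-to-extrinsic transfer by hand: it records the two facts ``$d_{M'_n}(\vec 0,\partial M'_n)\to\infty$'' and ``$I_{M'_n}(x)\ge 1/2$ whenever $d_{M'_n}(x,\partial M'_n)>1$'' and then invokes Remark~\ref{rem:6.6}, which cites Corollary~3.2 of~\cite{mt8} as a black box producing the $\vec 0$-components in $\B(n)$ with boundary on $\partial\B(n)$ and with the trichotomy of the Introduction in force. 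Your chord-arc iteration (Theorems~\ref{thm1.1} and~\ref{main2}) is the mechanism behind that corollary, so you are not taking a different route---you are sketching the content of the cited result rather than quoting it.

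There is a genuine issue in your case (b). First, you assert that the blow-up points satisfy $y_n\in\overline B_{M_n}(\vec 0,1)$ with no justification; nothing in the almost-minimality prevents $|A|$ from being bounded on the unit intrinsic disk about $\vec 0$ while blowing up far away, and then no translation of length at most $1$ will put the origin on the singular set. Second, the proposition allows only translation, so the ``final rescaling by the bounded factor $C$'' is illegitimate. The cleaner line (which is implicit in the paper's appeal to Remark~\ref{rem:6.6}) is: once the trichotomy of the Introduction applies, Theorem~\ref{geometry1} is excluded because $I_{M_n}(\vec 0)=1$ does not diverge, and if $|A_{M_n}|(\vec 0)\to\infty$ then $C=\lim I_{M_n}(\vec 0)=1$ on the nose, with no translation or rescaling at all. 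The residual case---$|A|$ bounded at $\vec 0$ but unbounded elsewhere---is handled (if at all) by the bounded translations mentioned in the Introduction, and neither your argument nor the paper's brief proof spells out why those translations can be taken of length at most $1$ and still yield $C=1$.
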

\begin{proof}
After choosing a subsequence suppose that
$$\frac{d_{M(n)}(p_n,\partial M(n))}{I_{M(n)}(p_n)}\geq n.$$
Let  $q_n\in M_n$ be points of almost-minimal injectivity radius and let
$M'_n$ be the scaled and translated $H'_n$-surface
$$M_n'=[\frac{1}{I_{M(n)}(q_n)} [B_{M(n)}(q_n,n/2)-q_n)].$$
Observe that $\{{M'_{n}}\}_{n\in \N}$  is a
sequence of compact $H_n$-surfaces in $\rth$ with genus
at most $k$,
 $\vec{0}\in {M'_{n}}$,
$\lim_{n\to \infty}d_{{M'_{n}}}(\vec{0},\partial {M'_n})$ and that
$ I_{M'_n}(x)\geq 1/2$ for any $x\in M'_n$ with
$d_{M'_n}(x,\partial M'_n)>1$.
Corollary~\ref{cor:5.7} now follows immediately from Remark~\ref{rem:6.6}.
\end{proof}

\section{Appendix.}

In this appendix we give the definition of a weak CMC
lamination of a Riemannian three-manifold. Specializing to the
case where all of the leaves have the same mean
curvature $H\in \R$, one obtains the definition of a
weak $H$-lamination, for which we give a few more explanations.
A simple example of a weak 1-lamination $\cL$ of $\rth$ that is
a not a 1-lamination is the union of two
spheres of radius 1 that intersect at single point of tangency.

For further background material on these notions see Section~3 of~\cite{mpr21},
\cite{mpr18}
or our previous papers~\cite{mt4,mt3}.

\begin{definition}
\label{definition}
{\rm A (codimension-one) {\it weak CMC lamination} ${\cal L}$ of a
Riemannian three-manifold $N$ is a collection
$\{ L_\alpha\}_{\alpha\in I}$ of (not necessarily
injectively) immersed constant mean curvature surfaces
called the {\it leaves} of ${\cal L}$,
satisfying the following four properties.
\begin{enumerate}[1.]
\item $\bigcup_{\alpha\in I}L_{\alpha }$ is a closed
subset of $N$. With an abuse of notation, we will also consider $\cL$
to be the closed set $\bigcup_{\alpha\in I}L_{\alpha }$.
\item The function $|A_{\cal L}|\colon {\cal L}\to [0,\infty )$ given by
\begin{equation}
\label{eq:sigma}
|A _{\cal L}|(p)=\sup \{ |A _L|(p)\ | \ L \mbox{ is a leaf of ${\cal L}$ with $p\in L$} \} .
\end{equation}
is uniformly bounded on compact sets of~$N$.
\item For every $p\in N$, there exists an $\ve_p>0$ such that if for some $\a\in I$,
$q\in L_\a \cap B_N(p,\ve_p)$, then $q$ contains a disk neighborhood in
$L_\a$ whose boundary is contained in $N- B_N(p,\ve_p)$.
\item If $p\in N$ is a point where either two leaves of
${\cal L}$ intersect or a leaf of ${\cal L}$ intersects itself, then
each of these  surfaces nearby $p$ lies at one side of the other
(this cannot happen if both of the intersecting leaves have the same
signed mean curvature as graphs over their common tangent space at
$p$, by the maximum principle).
\end{enumerate}
Furthermore:
\begin{itemize}
\item If $N=\bigcup _{\alpha } L_{\a }$, then we call ${\cal
L}$ a {\it weak CMC foliation} of $N$.

\item If the leaves of ${\cal L}$ have the same constant mean curvature
$H$, then we call ${\cal L}$ a {\it weak $H$-lamination} of $N$ (or
$H$-foliation, if additionally $N=\bigcup _{\alpha } L_{\a }$).
\end{itemize}
}
\end{definition}

\begin{figure}
\begin{center}
\includegraphics[width=5.1cm,height=4cm]{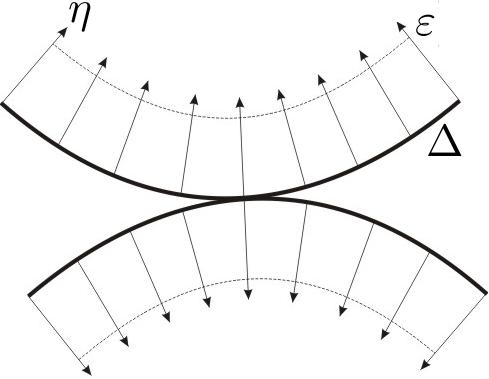}
\caption{The leaves of a weak $H$-lamination with $H\neq 0$ can
intersect each other or themselves, but only tangentially
with opposite mean curvature vectors. Nevertheless,
on the mean convex side of these locally intersecting leaves,
there is a lamination structure.
}
\label{figHlamin}
\end{center}
\end{figure}

The following proposition follows immediately from the definition of
a weak $H$-lamination and the maximum principle for $H$-surfaces.

\begin{proposition}
\label{prop10.2}
Any weak $H$-lamination
${\cal L}$ of a Riemannian three-manifold $N$ has a local $H$-lamination
structure on the mean convex side of each leaf.  More precisely,
given a leaf $L_{\a }$ of ${\cal L}$ and given a small disk $\Delta
\subset L_{\alpha }$, there exists an $\ve >0$ such that if $(q,t)$
denotes the normal coordinates for $\exp _q(t\eta _q)$ (here $\exp $
is the exponential map of $N$ and $\eta $ is the unit normal vector
field to $L_{\a }$ pointing to the mean convex side of $L_{\a }$),
then the exponential map $\exp $ is an injective submersion in
$U(\Delta ,\ve ):= \{ (q,t) \ | \ q\in \mbox{\rm Int}(\Delta ), \, t\in
(-\ve ,\ve )\} $, and the inverse image $\exp^{-1}({\cal L})\cap \{
q\in \mbox{\rm Int}(\Delta ), \,t\in [0,\ve )\} $ is an $H$-lamination of
$U(\Delta ,\ve $) in the pulled back metric, see
Figure~\ref{figHlamin}.
\end{proposition}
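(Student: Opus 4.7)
The plan is to pass to a tubular neighborhood of $\Delta$ via the normal exponential map, use the uniform bound on $|A_\cL|$ to realize each nearby leaf of $\cL$ as a small graph over $\Delta$, and then combine property~4 of Definition~\ref{definition} with the strong maximum principle for CMC graphs to upgrade the weak structure into a genuine $H$-lamination on the mean convex side $\{t\ge 0\}$.

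Concretely, since $|A_\cL|$ is uniformly bounded on a compact neighborhood of $\ov\Delta$ by property~2 and $\Delta$ is a smoothly embedded compact disk in the leaf $L_\a$, the standard Riemannian tubular neighborhood theorem produces $\ve_1>0$ such that $\exp(q,t):=\exp_q(t\eta_q)$ is an injective submersion on $U(\Delta,\ve_1)$, establishing the first half of the conclusion. Then, for every leaf $L_\b$ of $\cL$ and every point $p\in L_\b\cap\exp(U(\Delta,\ve_1/2)\cap\{t\ge 0\})$, property~3 supplies a disk neighborhood $D_\b\subset L_\b$ through $p$ whose boundary is pushed outside $\exp(U(\Delta,\ve_1))$, and the bound $|A_{D_\b}|\le \sup_{\ov\Delta}|A_\cL|$ is uniform. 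Standard CMC graph representation in a tubular neighborhood under a curvature bound then yields $\ve\in(0,\ve_1)$, depending only on this bound and on the geometry of $\Delta$, such that the component of $\exp^{-1}(D_\b)\cap U(\Delta,\ve)$ through the lift of $p$ is a smooth graph $\{t=f_\b(q)\}$ over some open $V_\b\subset\Int(\Delta)$ with $\|\nabla f_\b\|_{C^0}\le 1/2$. The set $\wt\cL:=\exp^{-1}(\cL)\cap U(\Delta,\ve)\cap\{t\ge 0\}$ is therefore a closed union of such small graphs, each a CMC graph of mean curvature $H$ in the pulled back metric $\wt g$ (with the unit normal inherited from $\cL$).

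The main obstacle, and the genuine content of the proposition, is to show that any two graphs $\{t=f_1\}$, $\{t=f_2\}$ appearing in $\wt\cL$ are equal or disjoint on $V_1\cap V_2$. If they meet at $(q_0,t_0)$ and their tangent planes differ, they cross transversally and neither lies locally on one side of the other, directly contradicting property~4. The substantive case is a tangential intersection: if the two graphs' unit normals agree at $(q_0,t_0)$, the strong maximum principle for CMC graphs of the same signed mean curvature forces $f_1\equiv f_2$ near $q_0$, and unique continuation extends this to the whole component of $V_1\cap V_2$ containing $q_0$. The delicate sub-case is opposite unit normals at the tangency, the alternative permitted by property~4, which I would rule out on the mean convex side of $L_\a$ by following the graph whose mean curvature vector points toward $L_\a$ to its first interior touching with $L_\a=\{t=0\}$ and applying property~4 there, obtaining either an excluded transversal crossing or a matched-orientation tangency with $L_\a$ that forces coincidence by the Hopf maximum principle, contradicting distinctness of the leaf. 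With the dichotomy in hand, combining it with the uniform $C^1$-control on the functions $f_\b$ and the closedness of $\wt\cL$ gives the required foliated chart: a local product of pairwise disjoint smooth CMC graphs of common signed mean curvature $H$, which is precisely an $H$-lamination of $U(\Delta,\ve)\cap\{t\ge 0\}$ in the pulled back metric.
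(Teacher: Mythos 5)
The paper itself offers no argument here: Proposition~\ref{prop10.2} is introduced only with the sentence that it ``follows immediately from the definition of a weak $H$-lamination and the maximum principle for $H$-surfaces,'' so your write-up is an attempt to supply details the authors omit. Most of your steps are fine: the tubular neighborhood, the graphical decomposition of the sheets via properties~2 and~3, the exclusion of transversal crossings by property~4, and the exclusion of same-orientation tangencies by the strong maximum principle. Note that the comparison principle also pins down the only surviving tangency configuration: if two distinct ordered sheets $\{t=f_1\}\leq\{t=f_2\}$ touch, then the lower one must have mean curvature vector pointing down (away from the upper) and the upper one up; in every other orientation pattern the ordering and the mean curvature inequality go the right way for the Hopf comparison principle, forcing the graphs to coincide.

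The gap is in your ``delicate sub-case.'' You propose to rule out a tangency between two sheets lying in $\{t\geq 0\}$ by ``following the graph whose mean curvature vector points toward $L_\a$ to its first interior touching with $L_\a=\{t=0\}$.'' No such touching need exist: a downward-oriented $H$-graph can hover at strictly positive height over $\Delta$ without ever meeting it (a nearly flat spherical cap of radius $1/H$ with center below does exactly this), and its domain of definition is only guaranteed to have a fixed size, so it may exit the chart before descending to $\{t=0\}$. (Also, the tangency with $L_\a$ that forces coincidence there is an opposite-orientation one, with the two mean curvature vectors pointing toward each other, not a ``matched-orientation'' one.) To close the gap you must use that $\ve$ may be shrunk: if for every $\ve=1/n$ some downward-oriented sheet passed through a point of $\{0\leq t<1/n\}$ lying over $q_n\to q_\infty\in \mbox{Int}(\Delta)$, the uniform curvature and gradient bounds represent these sheets as graphs $v_n>0$ over a disk of fixed radius about $q_n$ with $v_n(q_n)\to 0$ (positivity because a downward-oriented sheet touching $\{t=0\}$ is excluded by the comparison principle, and crossings are excluded by property~4); a subsequence converges smoothly to a downward-oriented $H$-graph $v_\infty\geq 0$ touching $\{t=0\}$ at $q_\infty$, whence $v_\infty\equiv 0$ by the comparison principle, so $\Delta$ would carry both mean curvature vectors $\pm H\eta$ --- impossible for $H>0$ (and for $H=0$ every tangency already forces coincidence, so nothing is needed). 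Hence for $\ve$ small all sheets in $\{0\leq t<\ve\}$ other than $\{t=0\}$ are upward-oriented, no two of them can touch, and the lamination structure follows.
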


\begin{definition} \label{deflimit}{\rm
A leaf $L_\a$ of a weak $H$-lamination $\cL$ is a {\em limit leaf} of $\cL$
if at some $p\in L_\a$, on its mean convex side near $p$, it is a limit leaf of the
related local $H$-lamination given in Proposition~\ref{prop10.2}.
}\end{definition}

\begin{remark} \label{remarkweak}
   {\em
\begin{description}
\item[{\rm 1.}] A weak $H$-lamination for $H=0$ is a minimal lamination.
\item[{\rm 2.}] Every CMC lamination
(resp. CMC foliation) of a Riemannian three-manifold is a
weak CMC lamination (resp. weak CMC foliation).
\item[{\rm 3.}] Theorem~4.3 in~\cite{mpr19} states  that the
2-sided cover of a limit leaf of a
weak $H$-lamination is stable. By Lemma~3.3 in~\cite{mpr10}
and the main theorem in~\cite{ros9},
the only complete stable $H$-surfaces in $\rth$ are planes.
Hence, every leaf $L$ of a weak $H$-lamination $\cL$
of $\rth$ is properly immersed and has an embedded half-open
regular neighborhood $N(L)$ on its mean convex side,
and  $N(L)$ can be chosen to be disjoint from $\cL$ if $L$
is not a plane.  In particular, if $L$
is a leaf of a weak $H$-lamination $\cL$
of $\rth$, then there is a small perturbation $L'$ of $L$ in $N(L)$
that is properly embedded in $\rth$.

  \end{description}
  }
\end{remark}

\vspace{.3cm}
\center{William H. Meeks, III at profmeeks@gmail.com\\
Mathematics Department, University of Massachusetts, Amherst, MA 01003}
\center{Giuseppe Tinaglia at giuseppe.tinaglia@kcl.ac.uk\\ Department of
Mathematics, King's College London,
London, WC2R 2LS, U.K.}

\bibliographystyle{plain}
\bibliography{bill2}

\end{document}